\theoremstyle{plain}
\newtheorem{theorem}{Theorem}[section]
\newtheorem{thm}[theorem]{Theorem}
\newtheorem{lemma}[theorem]{Lemma}
\newtheorem{proposition}[theorem]{Proposition}
\newtheorem{corollary}[theorem]{Corollary}
\theoremstyle{definition}
\newtheorem{definition}[theorem]{Definition}
\newtheorem{remark}[theorem]{Remark}
\theoremstyle{remark}
\newtheorem{example}{Example}
\numberwithin{equation}{section}
\newcommand{\mat}[1]{\left[ \begin{matrix} #1 \end{matrix} \right]}
\newcommand{\smat}[1]{\left[ \begin{smallmatrix} #1 \end{smallmatrix} \right]}
\newcommand{\ratk}{F}
\newcommand{\resk}{\mathfrak{f}}
\newcommand{\RR}{\mathcal{R}}
\newcommand{\PP}{\mathcal{P}}
\newcommand{\ep}{\varepsilon}
\newcommand{\Q}{\mathbb{Q}}
\newcommand{\p}{\varpi}
\newcommand{\val}{\mathrm{val}}
\newcommand{\extk}{E}
\newcommand{\bG}{\mathbf{G}}
\newcommand{\sG}{\mathsf{G}}
\newcommand{\sT}{\mathsf{T}}
\newcommand{\sU}{\mathsf{U}}
\newcommand{\Sp}{\mathrm{Sp}}
\newcommand{\SL}{\mathrm{SL}}
\newcommand{\GL}{\mathrm{GL}}
\newcommand{\St}{\mathrm{St}}
\newcommand{\sltwo}{\mathfrak{sl}_2(F)}
\newcommand{\g}{\mathfrak{g}}
\newcommand{\LieT}{\mathfrak{t}}
\newcommand{\LieS}{\mathfrak{s}}
\newcommand{\LieU}{\mathfrak{u}}
\newcommand{\buil}{\mathcal{B}}
\newcommand{\apart}{\mathcal{A}}
\newcommand{\Res}{\mathrm{Res}}
\newcommand{\Ind}{\mathrm{Ind}}
\newcommand{\cind}{\textrm{c-}\mathrm{Ind}}
\newcommand{\tr}{\mathrm{tr}}
\newcommand{\Cent}{\mathrm{Cent}}
\newcommand{\Norm}{\mathrm{Norm}}
\newcommand{\Lie}{\mathrm{Lie}}
\newcommand{\setorbit}{\mathscr{O}(0)}
\newcommand{\setorbitGamma}{\mathscr{O}(\Gamma)}
\newcommand{\nilp}{\mathcal{N}} 
\newcommand{\nil}{\mathrm{Nil}}
\newcommand{\orb}{\mathcal{O}}
\newcommand{\cO}{\mathcal{O}} 
\newcommand{\cone}{\mathop{Cone}} 
\newcommand{\WF}{\mathcal{WF}} 
\newcommand{\algy}{\dot{X}}
\newcommand{\dualy}{X}
\newcommand{\Sh}{\mathcal{S}}
\newcommand{\allbutone}{[G_x\backslash G/\Cent(\Gamma)]^{\text{deg}}}
\newcommand{\reg}{\mathrm{reg}}
\newcommand{\Ad}{\mathop{Ad}}
\newcommand{\real}{\mathbb{R}}
\newcommand{\sgn}{\mathsf{sgn}}
\newcommand{\diag}{\textrm{diag}}
\newcommand{\triv}{{\bf 1}}
\newcommand{\C}{\mathbb{C}}
\newcommand{\Z}{\mathbb{Z}}
\begin{document}

\title[LCE as branching rules]{The local character expansion as branching rules: nilpotent cones and the case of $\SL(2)$}
\author{Monica Nevins}
\email{mnevins@uottawa.ca}
\address{Department of Mathematics and Statistics, University of Ottawa, Canada K1N 6N5}
\thanks{Supported by  NSERC Discovery grant RGPIN-2020-05020.}
\subjclass[2010]{22E50}
\date{\today}

\begin{abstract}
We show  there exist representations of each maximal compact subgroup $K$ of the $p$-adic group $G=\SL(2,\ratk)$, attached to each nilpotent coadjoint orbit, such that every irreducible representation of $G$, upon restriction to a suitable subgroup of $K$, is a sum of these five representations in the Grothendieck group.  This is a representation-theoretic analogue of the analytic local character expansion due to Harish-Chandra and Howe.  Moreover, we show for general connected reductive groups that the wave front set of many irreducible positive-depth representations of $G$ are completely determined by the \emph{nilpotent support} of their unrefined minimal $K$-types.
\end{abstract}
\maketitle

\section{Introduction}

The distribution character of an admissible representation of a $p$-adic group can be expressed, in a neighbourhood of the identity, as a linear combination of Fourier transforms of the finitely many nilpotent orbital integrals in the dual of the Lie algebra.  This remarkable theorem, known as the Harish-Chandra--Howe local character expansion, has many variations (such as expansions on neighbourhoods of other semisimple elements, or expansions in terms of other collections of orbital integrals \cite{KimMurnaghan2003,KimMurnaghan2006, Spice2018}) and many applications (such as determining the Gel'fand--Kirillov dimension of a representation, or relating to conjectural classifications such as the orbit method, or the local Langlands correspondence \cite{BarbaschMoy1997, CiubotaruMason-BrownOkada2021, CiubotaruMason-BrownOkada2022, JiangLiuZhang2022}).  Though it is primarily considered in characteristic zero, it also holds when the characteristic is sufficiently large and a suitable substitute for the exponential map exists \cite{CluckersGordonHalupczok2014}.

In this paper, we interpret the local character expansion as a statement in the Grothendieck group of representations of a maximal compact open subgroup, upon restriction to a subgroup of suitable depth,  for the case that $G=SL(2,F)$, where $F$ is a local nonarchimedean field of residual characteristic at least $3$.  In particular, we construct for each nilpotent orbit $\orb$ of $G$ in the dual of its Lie algebra $\g^*$ a (highly reducible) representation $\tau_x(\orb)$ of each maximal compact open subgroup $G_x$ with the following property.

\begin{theorem}\label{maintheorem}
Let $\pi$ be an irreducible admissible representation of $G=\SL(2,F)$ of depth $r\geq 0$ and let $x$ be a vertex in the building of $G$.  Then there exist integers $c_{x,\orb}(\pi)$ such that in the Grothendieck group of representations we have
\begin{equation}\label{LCEbrules}
\Res^G_{G_{x,r+}}\pi = \sum_{\orb} c_{x,\orb}(\pi) \Res^{G_x}_{G_{x,r+}}\tau_{x}(\orb)
\end{equation}
where $G_{x,r+}$ is the Moy--Prasad filtration subgroup of $G_x$ of depth $r+$, and the sum is over all nilpotent orbits in $\g^*$.
\end{theorem}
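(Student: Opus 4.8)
The plan is to reduce \eqref{LCEbrules} to an identity of virtual characters on the compact open group $G_{x,r+}$ and to establish it from two inputs: the analytic local character expansion, which controls $\pi$ near the identity, and an explicit character formula for each $\tau_x(\orb)$. Since a smooth (possibly infinite-dimensional, but tame) representation of $G_{x,r+}$ is detected in the relevant Grothendieck group by its character distribution, it suffices to produce integers $c_{x,\orb}(\pi)$ with
\[
 \Theta_\pi\big|_{G_{x,r+}}\;=\;\sum_{\orb} c_{x,\orb}(\pi)\,\Theta_{\tau_x(\orb)}\big|_{G_{x,r+}},
\]
an identity of locally integrable class functions; by Harish-Chandra's local integrability theorem it is enough to verify it on the topologically unipotent regular set of $G_{x,r+}$.

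First I would invoke the Harish-Chandra--Howe expansion in the form adapted to depth. Since $\pi$ has depth exactly $r$ and the residual characteristic exceeds $2$, the exponential map $\exp\colon\g_{x,r+}\xrightarrow{\ \sim\ }G_{x,r+}$ converges, and $\Theta_\pi\circ\exp$ agrees on $\g_{x,r+}$ with $\sum_{\orb}c_{\orb}(\pi)\,\widehat{\mu}_{\orb}$, where $\orb$ runs over the five nilpotent orbits in $\g^*$ (under the Killing form: the zero orbit and the four nonzero nilpotent orbits of $\mathfrak{sl}_2(\ratk)$) and $\widehat{\mu}_\orb$ is the Fourier transform of the corresponding nilpotent orbital integral. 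This sharp range of validity for a depth-$r$ representation is part of the Moy--Prasad theory (see also \cite{KimMurnaghan2003,KimMurnaghan2006}), and for $\SL(2,\ratk)$ it can be checked directly; moreover the $\widehat{\mu}_\orb$ restricted to $\g_{x,r+}$ are linearly independent, so the $c_\orb(\pi)\in\C$ are uniquely determined.

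The heart of the argument is the construction of $\tau_x(\orb)$ and the computation of its character. I would arrange for $\tau_x(\orb)$ to be a highly reducible representation of $G_x$ whose character, restricted to the topologically unipotent set of $G_x$ and hence to every $G_{x,r+}$ with $r\ge 0$, equals a fixed integer linear combination $\sum_{\orb'}M_{\orb,\orb'}\,\widehat{\mu}_{\orb'}$, with $M$ independent of $r$ and of the vertex $x$ and invertible over $\Q$. Here $\widehat{\mu}_0$ is constant and $\tau_x(0)$ is essentially the trivial representation, whereas for a nonzero orbit $\widehat{\mu}_\orb$ is unbounded near the identity, so $\tau_x(\orb)$ must be infinite-dimensional. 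I would build $\tau_x(\orb)$ from the Moy--Prasad filtration of $G_x$: compactly induced representations ($\cind$) from characters of the filtration subgroups $G_{x,s}$ attached to elements of $\orb$, assembled over the successive layers, account for the quasi-homogeneous behaviour of $\widehat{\mu}_\orb$, while depth-zero constituents inflated from the finite reductive quotient $G_x/G_{x,0+}\cong\SL(2,\resk)$ supply the residue-field contribution, which is read off from classical computations on $\SL(2,\resk)$ (e.g.\ via generalized Gelfand--Graev representations). This must be carried out for both conjugacy classes of vertices of the Bruhat--Tits tree of $\SL(2,\ratk)$, and the character identity must hold on all of $G_{x,0+}$ at once; the explicit $\SL(2)$ orbital-integral formulas of Sally--Shalika are the natural bookkeeping device. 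Granting this, substituting into the expansion of $\Theta_\pi$ and inverting $M$ yields the theorem with $c_{x,\orb}(\pi)=\sum_{\orb'}(M^{-1})_{\orb',\orb}\,c_{\orb'}(\pi)$.

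I expect the main obstacle to be that this last formula must produce \emph{integers} for every irreducible $\pi$: a priori the analytic coefficients $c_\orb(\pi)$ are merely complex, and the Sally--Shalika formulas for them and for the $\widehat{\mu}_\orb$ involve quadratic Gauss sums and apparent half-integers. The resolution should be designed into the $\tau_x(\orb)$: by packaging the four nonzero orbits so as to match the pairing occurring among the $\widehat{\mu}_\orb$ on topologically unipotent elements, the half-integral contributions cancel and the $c_{x,\orb}(\pi)$ become integers. Concretely one would verify this against the entire admissible dual of $\SL(2,\ratk)$---the reducible and irreducible principal series, the Steinberg representation, and the depth-zero and positive-depth supercuspidals---organizing the computation by depth through the Moy--Prasad unrefined minimal $K$-type of $\pi$, whose nilpotent support already pins down which orbits $\orb$ carry the leading terms of the expansion.
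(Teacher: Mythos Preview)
Your approach is genuinely different from the paper's, and in an important sense it runs in the opposite direction.  The paper does \emph{not} use the Harish-Chandra--Howe local character expansion as input.  Instead it proves \eqref{LCEbrules} directly, by representation-theoretic means: for positive depth (Theorem~\ref{T:posdepth2}) it first establishes the explicit branching rules $\Res_{G_x}\pi = \pi^{G_{x,r+}}\oplus\bigoplus_g\Sh_x({}^g\Gamma,{}^g\chi)$ (Theorem~\ref{T:posdepth}), drawing on the full decomposition of $\Res_{G_x}\pi$ already computed in \cite{Nevins2005,Nevins2013}, and then matches each summand $\Sh_x({}^g\Gamma,{}^g\chi)$ with the corresponding summand $\Sh_x(X,\zeta)$ of $\tau_x(\cO)$ via the key technical Lemma~\ref{L:Shalikaequiv}, which shows these agree upon restriction to $G_{x,r+}$ whenever ${}^g\Gamma-X\in\g^*_{x,-r}$.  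The depth-zero case (Theorem~\ref{T:zerodepth2}) is handled similarly, with $\WF(\pi)$ identified via Gel'fand--Graev representations.  The coefficients $c_{x,\cO}(\pi)$ are thus obtained as explicit integers (in fact $0$ or $1$ for nonzero orbits, with the constant term $n_x(\pi)$ computed in Proposition~\ref{P:npi}), and no integrality miracle is required.  The character identity $\widehat{\mu}_\cO = c + [\tau_x(\cO)]$ (Proposition~\ref{C:orbint}) is then a \emph{corollary}, not an input.

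Your route---invoke the LCE, compute $[\tau_x(\cO)]$ as an integer combination of the $\widehat{\mu}_{\cO'}$, invert, and verify integrality case by case---could in principle be made to work, but it buys less and costs more.  First, it imposes the extra hypotheses needed for the LCE (existence of a convergent $G$-equivariant $\exp$ on $\g_{0+}$; see Section~\ref{SS:prestrict}), whereas the paper explicitly advertises that \eqref{LCEbrules} holds without them.  Second, the step you describe as ``the heart of the argument''---producing the matrix $M$ relating $[\tau_x(\cO)]$ to the $\widehat{\mu}_{\cO'}$---is precisely Proposition~\ref{C:orbint}, whose proof in the paper rests on the main theorem itself; establishing it independently would require summing the induced characters $[\Sh_x(X_{-d},\zeta)]$ over all $d$ and matching against the Sally--Shalika formulas, a computation you only gesture at.  Third, the entries of $M$ are genuinely half-integral ($q/2$ and $1/2$), so the integrality of $c_{x,\cO}(\pi)$ does not follow formally from inverting $M$: you would indeed have to run through the whole admissible dual and check cancellations, which is the same case analysis the paper performs, but now with the LCE coefficients (including the half-integral $c_0(\pi)$ of Table~\ref{Table:c0}) as an additional layer of input rather than output.
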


Moreover, the coefficients corresponding to the regular nilpotent orbits in this expansion are nonnegative integers and agree with those of the Harish-Chandra--Howe local character expansion (subject to suitable normalizations).   Note that while inherently expressing the same local nature of representations,  our statement holds with fewer restrictions on $\ratk$ than does the local character expansion, because it does not depend on the existence of a $G$-equivariant map, such as the exponential or a Cayley transform, from the Lie algebra to the group.

If $G$ is an inner form of $\mathrm{GL}_n(F)$, then an explicit decomposition of the form \eqref{LCEbrules} has been proven by Henniart and Vign\'eras in \cite{HenniartVigneras2023}; moreover, their local expansion holds for representations of $G$ over any field $R$ of characteristic not $p$.  They obtain the representations we denote here by $\Res_{G_{x,r+}}^{G_{x}}\tau_x(\orb)$ as $\Res_{G_{x,r+}}^G \Ind_P^G \triv$, for a suitable parabolic subgroup attached to $\orb$, vastly generalizing a result of Howe  \cite{Howe1974}.  Though such an elegant description is not directly available here (see also Remark~\ref{restrep}), our work does answer \cite[Questions 1.1, 1.2]{HenniartVigneras2023} for complex representations of $\SL(2,\ratk)$.

Now suppose $G$ is a general connected reductive group. In Section~\ref{S:Nilpotent}, we develop some theory towards establishing the direct relationship from the LCE to a decomposition like \eqref{LCEbrules}, as follows. 

The set of maximal orbits appearing in the local character expansion for an admissible representation $\pi$ is denoted $\WF(\pi)$; the closure of the union of these orbits is the wave front set of $\pi$.  For depth-zero representations $\pi$,  Barbasch and Moy \cite{BarbaschMoy1997} proved that $\WF(\pi)$  is determined by the depth-zero components of the restriction of $\pi$ to various maximal compact subgroups, through the theory of Gel'fand--Graev representations.

For a positive-depth representation with minimal $K$-type $\Gamma$ (in the sense of Moy and Prasad \cite{MoyPrasad1994}), we should instead infer  $\WF(\pi)$  from the   \emph{nilpotent support} $\nil(\Gamma)$ (Definition~\ref{D:nilpotentsupport}) of $\Gamma$.  This definition, of independent interest, depends strongly on the classification of nilpotent orbits using Bruhat--Tits theory \cite{BarbaschMoy1997,DeBackerNilpotent2002}.  In fact, in Proposition~\ref{P:cone=nil} we show that the algebraic notion of nilpotent support can be characterized as the set of nonzero nilpotent orbits appearing in the asymptotic cone on $\Gamma$, as defined in \cite{AdamsVogan2021}.  
In Theorem~\ref{T:nil=wf} (proof due to Fiona Murnaghan), we prove that  $\WF(\pi)$ is the set of maximal orbits of $\nil(\Gamma)$ whenever the $\Gamma$-asymptotic expansion \cite{KimMurnaghan2003} reduces to a single term.  

 This last result is similar to recent work of Ciubotaru and Okada, who show  that the depth-$r$ components of the restriction to certain compact open subgroups determine the wave front set of $\pi$ \cite{CiubotaruOkada2023}.  The idea of the nilpotent support is also central to \cite{CiubotaruOkada2023}, where they develop it using, among other things, the geometry of the associated finite reductive group.

Now again suppose that $G=\SL(2,\ratk)$.
Our result gives a second characterization of $\WF(\pi)$: it can be entirely determined from the \emph{non-typical} representations occurring in the restriction of $\pi$ to a maximal compact open subgroup, for $\pi$ of any depth.  That is, the asymptotic decomposition of $\Res_{G_x}\pi$ unfolds exactly as  the representations $\tau_x(\orb)$ for $\orb\in \WF(\pi)$.

For the case of a positive-depth representation $\pi$, our main theorem is stated in Theorem~\ref{T:posdepth2}, with the explicit values of the constant coefficient given in Proposition~\ref{P:npi}.  To prove the theorem, we first show that the restriction of $\pi$ to a maximal compact subgroup can be expressed entirely in terms of twists of the pair $(\Gamma,\chi)$ used in the construction of $\pi$ (Theorem~\ref{T:posdepth}), using results from \cite{Nevins2005,Nevins2013}.  Here, $\chi$ is a  character  of a torus $T=\Cent_G(\Gamma)$ that is realized by $\Gamma \in \g^*$, and the realization of the irreducible components of the restriction is framed in terms of a generalization (Proposition~\ref{P:Gxreps}) of a construction due to Shalika in his thesis.  
From this characterization, and a key technical result (Lemma~\ref{L:Shalikaequiv}), it follows that the expansion \eqref{LCEbrules} exists and has leading terms corresponding to the nilpotent support of $\Gamma$.
Since $\Gamma$ represents a minimal $K$-type of $\pi$ in the sense of Moy and Prasad \cite{MoyPrasad1994}, we independently recover from Theorem~\ref{T:nil=wf} that the maximal orbits in $\nil(\Gamma)$ coincide with $\WF(\pi)$.

For representations of depth zero, the principal technical difficulties lie in matching the depth-zero components with nilpotent orbits, particularly in the case of the twelve ``exceptional'' representations: the reducible principal series, the principal series composed of the trivial and the Steinberg representation, and the four special supercuspidal representations.  Once these are addressed, Theorem~\ref{T:zerodepth2} follows by carefully extracting the necessary branching rules from \cite{Nevins2005,Nevins2013}.  Again, the orbits in $\WF(\pi)$ are obtained both from the depth-zero components (via \cite{BarbaschMoy1997}) and from the asymptotic development of the branching rules.

At two crucial junctures we use information that is currently only known for $G=\SL(2,\ratk)$ and a handful of other small rank groups:  one is the explicit calculation of the asymptotic cone on any semisimple element of $\g^*$ (Section~\ref{S:sl2nil}); the other is the full knowledge of the representation theory of the maximal compact subgroups of $G$ (Section~\ref{S:Representations}).  
While the former seems a tractable and interesting question in general, the latter is quite daunting: it is not expected that we will achieve a classification of the representations of maximal compact open subgroups of $p$-adic reductive groups.  Note that a full classification is not necessary to prove the theorem: what is needed is a construction of an appropriate representation of $G_x$ attached to each nilpotent orbit, and we explore how this might be done in Section~\ref{S:gx}.


There are many interesting applications and open directions left to pursue. 
Evidently the overarching goal is to establish a result like \eqref{LCEbrules} for a large class of groups, using the tools presented here, or those developed in \cite{HenniartVigneras2023}.   To extend the work here, it may be fruitful to build representations of the groups $G_{x,0+}$ directly, rather than to construct representations of $G_{x,0}$; this has the advantage of avoiding the difficulties inherent at depth zero.  It may also allow for a more uniform treatment of all points $x$ of the building; in this paper, we consider only vertices, and the union of all $G_{x,r+}$ as $x$ runs over vertices is not equal to $G_{r+}$ in general.  

In another direction: the $\Gamma$-asymptotic expansions of  \cite{KimMurnaghan2003,KimMurnaghan2006} describe the character of a positive-depth representation in a larger neighbourhood than does the local character expansion, by incorporating a minimal $K$-type $\Gamma$.  Then Theorem~\ref{T:posdepth} can be interpreted as analogously formulating these expansions in terms of branching rules.  It would be interesting to explore this idea further.

The paper is organized as follows.  We set our notation in Section~\ref{S:Notation} and then present some background on the local character expansion that provide the motivation and context for our results.  In Section~\ref{S:Nilpotent} we consider a general connected reductive group $G$.  We define the nilpotent support of an element $\Gamma$ of $\g^*$, show it defines the asymptotic cone of $\Gamma$, and relate this to the wave front set via the theory of $\Gamma$-asymptotic expansions. 

We then specialize to $G=\SL(2,\ratk)$.
In Section~\ref{S:sl2nil} we characterize the nilpotent cones $\nil(\Gamma)$ in many ways (Proposition~\ref{P:asymptotic_orbit_invariance}) and compute them explicitly.     In Section~\ref{S:Representations} we recall the construction of certain irreducible representations of $\SL(2,\RR)$ by Shalika in his 1966 thesis \cite{Shalika1966}, and then rephrase it using Bruhat--Tits theory and derive some consequences.  This allows us to define, for each vertex $x\in\buil(G)$, each nilpotent orbit $\cO \subset \g^*$, and each central character $\zeta$ a representation $\tau_x(\cO,\zeta)$ of $G_x$.   

We prove our main theorems for representations of positive depth in Section~\ref{S:posdepth} and for representations of depth zero in Section~\ref{S:depthzero}.  
 We conclude with two brief applications of Theorem~\ref{maintheorem} in Section~\ref{S:applications}: an explicit formula for the functions $\hat{\mu}_\orb$ in terms of the trace character of the representation $\tau_x(\orb)$ of the compact group $G_x$; and an explicit polynomial expression for $\dim(\pi^{G_{x,2n}})$ (in the spirit of \cite{HenniartVigneras2023}) whose existence is predicted by the local character expansion. 
 
\subsection*{Acknowledgements}  This work was instigated by a question posed to the author by David Vogan and has benefitted enormously from many conversations with him in the  online research community on Representation Theory and Noncommutative Geometry sponsored by the American Institute of Mathematics.   
The approach to $\nil(\Gamma)$ given here was signficantly refined through conversations with Fiona Murnaghan and  Loren Spice.  
This work progressed over a period of visits to 
many colleagues, and benefitted from their comments and interest:  Vincent S\'echerre,  Laboratoire de Mathématiques de Versailles, Universit\'e Paris-Saclay; Anne-Marie Aubert, Institut de Mathématiques de Jussieu-Paris Rive Gauche, Universit\'e de Paris/Sorbonne Universit\'e and Jessica Fintzen, Universit\"at Bonn.  It is a true pleasure to thank all of these generous people.

\section{Notation and background}\label{S:Notation}

Let $\ratk$ be a local nonarchimedean field of residual characteristic $p\neq 2$, with integer ring $\RR$, maximal ideal $\PP$ and residue field $\resk$ of cardinality $q$.  We impose additional hypotheses on $p$ in Section~\ref{SS:prestrict}, below.   
Fix once and for all an additive character $\psi$ of $\ratk$ that is trivial on $\PP$ and nontrivial on $\RR$.  Fix a uniformizer $\p$ and normalize the valuation on $\ratk$ (and any extension thereof) by $\val(\p)=1$.  We write $\val(0):=\infty$.

Let $\bG$ denote a connected reductive algebraic group defined over $\ratk$ whose group of $\ratk$-rational points is denoted $G$; we use $\g=\Lie(\bG)(\ratk)$ to denote its Lie algebra over $\ratk$.  We simplify notation by refering to tori, Borel subgroups and parabolic subgroups of $G$ when we mean the $F$-points of such algebraic $F$-subgroups of $\bG$, and denote them in roman font.  Let $G^{\reg}$, respectively $\g^{\reg}$, denote the set of regular elements of $G$, respectively $\g$.
The group $G$ acts on $\g$ via the adjoint action $\Ad$ and on its dual $\g^*$ via the coadjoint action $\Ad^*$; we abbreviate these by both $g\cdot X$ or ${}^gX$ for $g\in G$ and $X$ in $\g$ or $\g^*$.  Similarly, if $H$ is a subgroup of $G$ we write ${}^gH$ for the group $gHg^{-1}$.

An element $X\in\g^*$ or $\g$ is called \emph{semisimple} (or \emph{almost stable}) if its $G$-orbit is closed.  We define $X\in \g^*$ or $\g$ to be \emph{nilpotent} if there exists an $F$-rational one-parameter subgroup $\lambda \in X_*(\bG)$ such that $\lim_{t\to 0}{}^{\lambda(t)}X = 0$.  By \cite[\S2.5]{AdlerDeBacker2002}, this is equivalent to a more usual definition that the closure of the coadjoint orbit in the rational topology contains $0$.    
We say the one-parameter subgroup $\lambda$ is \emph{adapted} to $X$ \cite[Definition 4.5.6]{DeBackerNilpotent2002}, if ${}^{\lambda(t)}X =t^2X$.  
 We write $\nilp^*$ for the set of nilpotent elements of $\g^*$ and $\setorbit$ for the (finite) set of $G$-orbits in $\nilp^*$. 
  
We sometimes specify a group of matrices merely by the sets in which its entries lie; in this case, that the resulting subgroup is the intersection of this set with $G$ is understood.  We write $\lceil t \rceil = \min\{n\in \Z\mid n\geq t\}$ and $\lfloor t \rfloor = \max\{n\in \Z \mid n\leq t\}$.  Write $\Cent_G(S)$ for the centralizer in $G$ of the element or set $S$.  We may write $[\sigma]$ for the trace character of a representation $\sigma$ of a finite or compact group.  The trivial representation is denoted $\triv$, and the characteristic function of a subset $S$ is denoted $\triv_S$.

\subsection{The Bruhat--Tits building and Moy--Prasad filtration subgroups}
Let $\buil(G)=\buil(\bG,\ratk)$ denote the (enlarged) Bruhat-Tits building of $G$; then to each $x\in \buil(G)$ we associate its stabilizer $G_x$, which is a compact subgroup of $G$ containing the parahoric subgroup $G_{x,0}$.  
These admit a Moy-Prasad filtration by normal subgroups $G_{x,r}$ with $r\in \mathbb{R}_{\geq 0}$ defined relative to the valuation on $\ratk$.  We briefly recap the definition; for a careful and detailed summary, see for example \cite[\S2]{FintzenMP2021}.  

To define $G_{x,r}$, choose an apartment $\apart \subset \buil(G)$ containing $x$; this is the affine space over $X_*(T)\otimes_\Z\real$ for some maximal split torus $T$ of $G$ and we write $\apart = \apart(G,T)$.  Let $\Phi=\Phi(G,T)$ denote the corresponding root system and $\Psi$ the set of affine roots, viewed as functions on $\apart$.  For each root $\alpha\in \Phi$, let $U_\alpha$ denote the corresponding root subgroup.  The affine roots $\psi$ with gradient $\alpha$ define a filtration of $U_\alpha$ by compact open subgroups $U_\psi$.   Let $C=\Cent_G(T)$; it contains a parahoric subgroup $C_0$, and a filtration by compact open normal subgroups $C_r$, $r>0$, that is independent of the point $x\in \apart$.  Then for any $r\geq 0$ we define compact open subgroups
$$
G_{x,r} = \langle C_r, U_\psi \mid \psi\in \Psi, \psi(x)\geq r\rangle;
$$
if $r=0$ this is the parahoric subgroup and for $r>0$ it is a Moy--Prasad filtration subgroup of $G_{x,0}$.  It is independent of the choice of apartment containing $x$. The Moy--Prasad filtration is $G$-equivariant; for example ${}^gG_{x,r}=G_{gx,r}$ for all $x\in \buil(G)$ and $r\geq 0$.

Similarly, the Lie algebra $\g$ admits a filtration $\g_{x,r}$ by $\RR$-modules indexed by $r\in \mathbb{R}$, as follows.  Let $\mathfrak{t}$ denote the Lie algebra of $T$, $\mathfrak{c}$ its centralizer in $\g$ and for each $\alpha \in \Phi$, let $\g_\alpha$ denote the corresponding root subspace.  These subspaces admit filtrations by $\RR$-submodules $\mathfrak{c}_r$ with $r\in \real$ and $\g_{\psi}$ for $\psi\in \Psi$, respectively, such that
\begin{equation}\label{E:MPfiltration}
\g_{x,r} = \mathfrak{c}_r \oplus \bigoplus_{\alpha} \g_{\alpha,x,r},
\end{equation}
where $\g_{\alpha,x,r}$ is the union of the $\RR$-submodules $\g_\psi$ such that $\psi\in \Psi$, the gradient of $\psi$ is $\alpha$ and $\psi(x)\geq r$.
We write 
$$
G_{x,r+}=\bigcup_{s>r}G_{x,s}, 
\quad \text{and} \quad 
\g_{x,r+} = \bigcup_{s>r} \g_{x,s}.
$$  
By \cite[\S1.6]{Adler1998}, there exists a mock exponential map $e=e_x\colon \g_{x,0+}\to G_{x,0+}$ that induces the Moy--Prasad isomorphism $\g_{x,r}/\g_{x,2r} \cong G_{x,r}/G_{x,2r}$ for any $r>0$ (among other desirable properties). 
Writing $\langle X,Y\rangle$ for the natural pairing of $X\in \g^*$ with $Y\in \g$, the Moy-Prasad filtration on the dual of the Lie algebra is defined by $\g^*_{x,r} = \{X\in \g^*\mid \forall Y\in \g_{x,(-r)+}, \langle X,Y\rangle\in \PP \}$. We again define $\g^*_{x,r+} = \cup_{s>r}\g^*_{x,s}$.

Finally, for any $r\geq 0$ we define $G$-stable subsets
$$
G_{r} = \bigcup_{x\in \buil(G)}G_{x,r} \quad \text{and} \quad G_{r+} = \bigcup_{x\in \buil(G)}G_{x,r+}.
$$
For any real number $r$ we do the same to define $\g_r$ and $\g_{r+}$.

If $(\pi,V)$ is an irreducible admissible representation of $G$, then its depth is defined as the least real number $r\geq 0$ such that there exists $x\in\buil(G)$ for which $V^{G_{x,r+}} \neq \{0\}$.  
We define the depth of a smooth irreducible representation $\rho$ of $G_x$, for fixed $x$, in the same way; this is equivalent to the least $r\geq 0$ for which $\rho$ factors through $G_x/G_{x,r+}$.

\subsection{Restrictions on \texorpdfstring{$p$}{p}}\label{SS:prestrict}

We impose the restriction that $G$ splits over a tamely ramified extension of $\ratk$ and that $p$ does not divide the order of the absolute Weyl group of $\bG$. One of the main results of \cite{Fintzen2021} is that  this is sufficient to ensure that all irreducible admissible representations are tame.  Combining \cite[Lemma 2.2, Table 1]{Fintzen2021} and \cite[\S 1]{AdlerRoche2000}, one sees that the hypotheses of  \cite[Hypothesis 2.1.1]{Adler1998} or \cite[Prop 4.1]{AdlerRoche2000} hold, so that there is a non-degenerate $G$-invariant bilinear form on $\g$ under which $\g^*_{x,r}$ and $\g_{x,r}$ are identified for all $x$ and $r$.  For $\mathbf{G}=\SL(2)$ and $p\neq 2$ we may take the trace form, and define for each $\dot{X}\in \g$ the element $X\in\g^*$ by $\langle X,\cdot\rangle =\tr(\dot{X}\cdot)$. 

We also impose the hypotheses of \cite[\S4]{DeBackerNilpotent2002} to obtain the classification of nilpotent orbits; this requires the use of $\sltwo$ triples over the residue field as well as some properties of a mock exponential map.   By recent work of Stewart and Thomas, \cite{StewartThomas2017} the former condition is satisfied for $p>h$, where $h$ is the Coxeter number of $G$.  To satisfy all hypotheses for $G=\SL(2,F)$, it suffices to take $p\geq 3$.

In contrast, to state the local character expansion, which relates a function on the group to one on the Lie algebra, one needs a $G$-equivariant map $ \g_{0+}\to G_{0+}$ satisfying \cite[Hypothesis 3.2.1]{DeBackerHomogeneity2002}.  Such a map, which we'll simply denote $\exp$, can exist in large positive characteristic (see, for example, the discussion in \cite[\S2]{CluckersGordonHalupczok2014}); in characteristic zero, \cite[Lemma B.0.3]{DeBackerReeder2009} gives an effective lower bound on $p$.  
For $G=\SL(2,F)$, this entails in characteristic zero that $p>e+1$ where $e$ is the ramification index of $F$ over $\Q_p$, for example.

\subsection{The local character expansion}

As detailed in the expanded notes \cite{HarishChandra1999}, Harish-Chandra proved in the 1970s that the distribution character of an irreducible admissible representation $\pi$ of $G$, which is given on $f\in C_c^\infty(G)$ by
$$
\Theta_\pi(f)  = \tr\int f(g)\pi(g) \, dg,
$$
is well-defined and representable by a function, which we also denote $\Theta_\pi$, that is locally integrable on $G$ and locally constant on the set $G^\reg$ of regular semisimple elements of $G$ (see \cite[\S13]{AdlerKorman2007} and the discussion therein).  


Similarly, to each coadjoint orbit $\orb\subset \g^*$ we associate its  orbital integral, given on $f\in C_c^\infty(\g^*)$ by
\begin{equation}\label{E:orbitalintegral}
\mu_\orb(f) = \int_{\orb} f(X) \, d\mu_\orb(X)
\end{equation}
where $d\mu_\orb$ is a Radon measure \cite{RangaRao1972}.  
Relative to $\psi$, the fixed additive character of $\ratk$, the Fourier transform of $f\in C_c^\infty(\g)$ is a function $\hat{f} \in C_c^\infty(\g^*)$. The Fourier transform of the orbital integral $\mu_\orb$ is the distribution given on $f\in C_c^\infty(\g)$ by $\widehat{\mu_\orb}(f) = \mu_\orb(\hat{f})$.  Then $\widehat{\mu_\orb}$ is representable by a locally integrable function on $\g$ that is locally constant on $\g^{\reg}$ \cite[Theorem 4.4]{HarishChandra1999}.  
We set $\g^{\reg}_{r+}:=\cup_{x\in \buil(G)}\g_{x,r+}\cap \g^{\reg}$.

The local character expansion expresses that these finitely many functions $\widehat{\mu_\orb}$, for $\orb\in \setorbit$, form a basis, in a neighbourhood of $0$,  for the space of locally integrable $G$-invariant functions that are locally constant on $\g^{\reg}$.  The nature of the expansion was  first proven for $G=\GL(n,F)$ in characteristic $0$ by Roger Howe \cite{Howe1974} and then in the generality of connected reductive groups in characteristic zero by Harish-Chandra \cite{HarishChandra1999}.  Cluckers, Gordon and Halupczok proved its validity in large positive characteristic in \cite{CluckersGordonHalupczok2014}; Adler and Korman proved an analogous result for expansions centered at other semisimple elements in \cite{AdlerKorman2007}.

The precise domain on which the local character expansion holds was conjectured by Hales, Moy and Prasad \cite{MoyPrasad1994} and proven in \cite{Waldspurger1995} for a large class of groups and by \cite{DeBackerHomogeneity2002} in the following generality.  

 \begin{thm}[The Local Character Expansion]
If $\pi$ is an irreducible admissible representation of $G$ of depth $r$, then there exist unique $c_\orb(\pi)\in \C$ such that for all $X\in \g^{\reg}_{r+}$ we have
\begin{equation}\label{E:LCE}
\Theta_\pi(\exp(X)) = \sum_{\orb \in \setorbit} c_\orb(\pi) \widehat{\mu_\orb}(X).
\end{equation}
\end{thm}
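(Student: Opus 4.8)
The plan is to transport the statement to the Lie algebra, where existence of the expansion on \emph{some} neighbourhood of $0$ is the classical germ expansion of Harish-Chandra, and then to invoke a homogeneity principle to sharpen the domain to $\g^{\reg}_{r+}$. First I would pull $\Theta_\pi$ back along $\exp$. Since $\exp\colon\g_{0+}\to G_{0+}$ is $G$-equivariant, satisfies \cite[Hypothesis 3.2.1]{DeBackerHomogeneity2002}, and restricts to the Moy--Prasad isomorphisms $\g_{x,s}/\g_{x,2s}\cong G_{x,s}/G_{x,2s}$, the rule $f\mapsto\int_{\g_{0+}}\Theta_\pi(\exp X)\,f(X)\,dX$ defines a $G$-invariant distribution $\theta_\pi$ on $\g_{0+}$ represented by a locally integrable function that is locally constant on $\g^{\reg}\cap\g_{0+}$ (using that $\exp$ carries $\g^{\reg}$ into $G^{\reg}$ and the analogous properties of $\Theta_\pi$). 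The theorem becomes the assertion that the function representing $\theta_\pi$ agrees with $\sum_\orb c_\orb(\pi)\,\widehat{\mu_\orb}$ on $\g^{\reg}_{r+}$. Uniqueness of the $c_\orb(\pi)$ follows from the linear independence of the family $\{\widehat{\mu_\orb}\}_{\orb\in\setorbit}$ on $\g^{\reg}_{r+}$, a classical fact coming from the distinct homogeneity degrees of the leading germs of the $\widehat{\mu_\orb}$ at $0$.

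Next, existence on \emph{some} neighbourhood $\omega$ of $0$ is Harish-Chandra's local germ expansion \cite{HarishChandra1999} (for $\GL_n$, originally Howe \cite{Howe1974}). The essential input is Howe's finiteness conjecture: the space of $G$-invariant distributions on $\g$ whose Fourier transforms are supported on the nilpotent cone $\nilp^*$ is finite-dimensional and spanned by the orbital integrals $\mu_\orb$; dually, the germs at $0$ of the functions $\widehat{\mu_\orb}$ span the space of germs of invariant distributions of the type carried by $\theta_\pi$. Applying this to $\theta_\pi$ produces constants $c_\orb(\pi)$ with \eqref{E:LCE} valid on an a priori $\pi$-dependent neighbourhood $\omega$; by the uniqueness above these are the only candidates for the constants in the statement.

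The crux is to replace $\omega$ by the explicit set $\g^{\reg}_{r+}$ with $r=\operatorname{depth}(\pi)$; this is the homogeneity theorem of Waldspurger \cite{Waldspurger1995} and DeBacker \cite{DeBackerHomogeneity2002}, which I would apply as follows. One pillar is a homogeneity property of $\mathcal{J}:=\operatorname{span}\{\widehat{\mu_\orb}\}$: the dilation $X\mapsto\p^2X$ preserves each nilpotent orbit (since ${}^{\lambda(\p)}X=\p^2X$ for $\lambda$ adapted to $X$), hence multiplies each $\widehat{\mu_\orb}$ by a scalar, necessarily a power of $q$, while carrying $\g_{x,s}$ onto $\g_{x,s+2}$; therefore an identity of invariant distributions valid on $\g^{\reg}$ near one Moy--Prasad level propagates to all deeper levels. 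Establishing this pillar needs the Bruhat--Tits classification of nilpotent orbits, and so the $\sltwo$-triples over $\resk$ and the mock-exponential machinery of \cite[\S4]{DeBackerNilpotent2002}. The second pillar is that, by admissibility together with the depth hypothesis, the germ of $\theta_\pi$ at $0$ is ``$r$-homogeneous'' in DeBacker's sense --- roughly because $\pi^{G_{x,2r+}}$ is finite-dimensional and $G_{x,r+}$ acts on it through the finite quotient $G_{x,r+}/G_{x,2r+}$, so that $\theta_\pi$ tested against functions supported on $\g_{x,r+}$ descends to $\g_{x,r+}/\g_{x,2r+}$. As $\sum_\orb c_\orb(\pi)\,\widehat{\mu_\orb}$ is manifestly $r$-homogeneous as well, both sides of \eqref{E:LCE} lie in the finite-dimensional space of $r$-homogeneous invariant distributions, they agree on $\omega\cap\g^{\reg}_{r+}$, and the homogeneity-propagation forces them to agree on all of $\g^{\reg}_{r+}$.

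I expect this last step to be the main obstacle: the descent/homogeneity argument is delicate, and both pillars are substantial --- the first is essentially the content of \cite[\S4]{DeBackerNilpotent2002} together with \cite{DeBackerHomogeneity2002}, while the second requires carefully relating $\Theta_\pi$ near the identity to representations of the finite quotients $G_{x,r+}/G_{x,2r+}$, which is where the hypotheses on $p$ and the good behaviour of $\exp$ enter. For the purposes of this paper I would not reprove this but cite \cite{Waldspurger1995,DeBackerHomogeneity2002}; the only new content needed is the reduction to their result, carried out in the first three paragraphs above.
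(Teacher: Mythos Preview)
Your proposal is appropriate: the paper itself does not prove this theorem but states it as background, attributing existence to Howe and Harish-Chandra and the sharp domain $\g^{\reg}_{r+}$ to \cite{Waldspurger1995} and \cite{DeBackerHomogeneity2002} (with \cite{CluckersGordonHalupczok2014} in positive characteristic). Your sketch of the argument behind those citations is accurate, and your final paragraph --- that you would cite rather than reprove the homogeneity result --- matches exactly what the paper does.
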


We denote by $\WF(\pi)$ the set of \emph{maximal} nilpotent orbits $\orb$ such that $c_\orb(\pi)\neq 0$, where the ordering is taken in the local topology; this is the set denoted $\mathrm{WF}^{\mathrm{rat}}(\pi)$ in \cite{Tsai2023}.
In \cite{Heifetz1985}, Heifetz defined and developed the analytic notion of the \emph{wave front set} of a representation of a $p$-adic group, in analogy with the work of Howe \cite{Howe1979} in the real case.  In \cite{Przebinda1990}, Przebinda proved that the wave front set coincides with the support of the right side of \eqref{E:LCE}, which is the closure of the union of these orbits.  Recent work of Tsai \cite{Tsai2022,Tsai2023, Tsai2023a} has shown that the orbits of $\WF(\pi)$ may fail to be stably conjugate. 

Finally, note that for $G=\GL(n,F)$, Howe proved that for each $\orb \in \setorbit$,  there is a corresponding parabolic subgroup $P$ such that in a neighbourhood $O$ of $0\in \g$ we have
$$\widehat{\mu_\orb}|_O =\Theta_\pi\circ \exp|_O$$
where $\pi = \Ind_P^G \bf{1}$ \cite[Lemma 5]{Howe1974}.   This has recently been generalized to all inner forms of $G$, and representations over fields of characteristic not equal to $p$, in \cite[Theorem 1.3]{HenniartVigneras2023}. 
In the same vein, for $\SL(2,\ratk)$, the functions $\widehat{\mu_\orb}$ are almost equal to the characters of special unipotent representations (see \eqref{E:specialchar}).
We cannot expect such equalities in general as, for example, for classical  groups nonspecial orbits cannot occur in $\WF(\pi)$ for any $\pi$ \cite[Thm 1.4]{Moeglin1996}).  The main goal in this paper is to propose an example of a  weaker form of the Howe--Henniart--Vign\'eras theorem, based on representations of a maximal compact open subgroup, that one may hope can hold true in general.

\section{Nilpotent orbits and nilpotent support} \label{S:Nilpotent}

In this section, $G$ is an arbitrary connected reductive group, subject to the hypothesis on $p$ of Section~\ref{SS:prestrict}.  We define the (local) nilpotent support of an element of $\g^*$, and relate this both to the asymptotic cone and to the wave front set of a representation of positive depth.

\subsection{Degenerate cosets and nilpotent orbits}
In \cite[\S3]{AdlerDeBacker2002}, Adler and DeBacker generalize ideas of Moy and Prasad to establish, for connected reductive groups, that for all $r\in \mathbb{R}$ 
$$
\g^*_r = \bigcap_{x\in \buil(G)}(\g^*_{x,r}+\nilp^*),
$$
where $\g^*_r := \bigcup_{x\in \buil(G)} \g^*_{x,r}$. 
They further show that 
$$
\nilp^* = \bigcap_{r\in \mathbb{R}} \g_r^*.
$$  
Given $x\in \buil(G)$ and $X\in \g^*\setminus \{0\}$, the \emph{depth of $X$ at $x$} is the unique value $t = d_x(X)$ such that $X\in \g^*_{x,t}\setminus \g^*_{x,t+}$.   When $X$ is not nilpotent, they prove that the \emph{depth of $X$}, given by
$$
d(X)=\max\{d_x(X)\mid x\in \buil(G)\} = \max\{r\mid X\in \g^*_r\}
$$
is well-defined and rational. For $X$ nilpotent, we set $d(X)=\infty$.  Depth is $G$-invariant.  

For semisimple $\Gamma\in \g^*$, let $T\subset \Cent_G(\Gamma)$ be a maximal torus with associated absolute root system $\Phi(G,T)$.  Then $\Gamma$ is called \emph{good} if for all $\alpha \in \Phi(G,T)$, we have 
$\val(\Gamma(d\alpha^\vee(1)))\in \{d(\Gamma),\infty\}$.  
By \cite[Thm 2.3.1]{KimMurnaghan2003}, if $\Gamma$ is good then the set of points $x\in \buil(G)$ at which $d_x(\Gamma)$ attains its maximum value $d(\Gamma)$ is exactly $\buil(\Cent_G(\Gamma))\subset \buil(G)$.

For any $\Gamma \in \g^*$ set $d=d_x(\Gamma)$. The coset $\Gamma+\g^*_{x,d+}$ is called \emph{degenerate} if it contains a nilpotent element $X\in \nilp^*$.  From the relations above it follows that this  happens if and only if $d < d(\Gamma)$.
In \cite[\S5]{DeBackerNilpotent2002}, DeBacker proves that the set of nilpotent $G$-orbits meeting a degenerate coset $\Gamma+\g^*_{x,d+}$ has a unique minimal element with respect to the (rational) closure relation on orbits, which we'll denote $\mathcal{O}(\Gamma,x)$. This generalizes a result of Barbasch and Moy \cite[Prop 3.1.6]{BarbaschMoy1997} for $d=0$, which was integral to their determination of the wave front set of a depth zero representation.

To classify nilpotent orbits in this way, DeBacker proceeds as follows.  Identify $\g$ and $\g^*$.  Given a nilpotent element $X\in \g$, complete $X$ to an $\sltwo$ triple $(X,H,Y)$.  Choose $r\in \mathbb{R}$ and create the building set 
$$
\buil_r(X,H,Y) = \{x\in \buil(G) \mid X \in \g_{x,r}, \ H \in \g_{x,0}, \ Y\in \g_{x,-r}\};
$$
he proves this set is a nonempty, closed, convex subset of $\buil(G)$ with the property that for all $x\in \buil_r(X,H,Y)$ we have $\cO(X,x)=G\cdot X$.  

\begin{remark}\label{rem:coverage}
Note that for each $g\in G$, we have $\buil_r({}^gX,{}^gH,{}^gY)={}^g\buil_r(X,H,Y)$, and for fixed $X$ the union of these need not cover $\buil(G)$.  Moreover, if $\mu$ is a one-parameter subgroup adapted to this triple, then by \cite[Remark 5.1.5]{DeBackerNilpotent2002}, $\buil_r(X,H,Y)=\buil_0(X,H,Y)+\frac{r}{2}\mu$, where this sum is taken in any apartment in $\buil(C_G(\mu))$.  It follows that (if the rank of $G$ is greater than $1$) there exist orbits $\cO$ (such as ones for which $\buil_r(X,H,Y)$ is a point) for which there exist $y\in \buil(G)$ such that $\cO \neq \cO(X,y)$ for any $X\in \cO$. For example, in $\Sp(4,F)$, the principal nilpotent orbits are only obtained along certain lines emanating from vertices.  
\end{remark}

\subsection{Nilpotent support and nilpotent cones}

We now explore different ways to understand the asymptotic nilpotent support of a general element $\Gamma\in \g^*$ and show their equivalence.

\begin{definition}\label{D:nilpotentsupport}
Let $\Gamma\in\g^*$.  If $x\in \buil(G)$, then the \emph{local nilpotent support at $x$} of $\Gamma$ is  
$$
\nil_x(\Gamma) = \{ \cO({}^g\Gamma,x) \mid g\in G, d_x(g\cdot \Gamma)<d(\Gamma)\},
$$
which is the set of nilpotent orbits defined by degenerate cosets at $x$ of elements of the $G$-orbit of $\Gamma$.  On the other hand, the \emph{nilpotent support} of $\Gamma$ is 
$$
\nil(\Gamma) = \{ \cO(\Gamma,x) \mid x\in \buil(G), d_x(\Gamma)<d(\Gamma)\},
$$
the set of nilpotent orbits corresponding to any (nontrivial) degenerate coset of $\Gamma$.
\end{definition}

Note that if $\Gamma$ is nilpotent, then $\nil(\Gamma) = G\cdot \Gamma$. More generally, for any $g\in G$, we have $d_x(\Gamma)=d_{gx}({}^g\Gamma)$ and ${}^g(\Gamma+\g^*_{x,d+})= {}^g \Gamma + \g^*_{gx,d+}$.  
Thus $\cO(\Gamma,x)=\cO({}^g\Gamma,gx)$ and
$$
\nil(\Gamma)= \bigcup_{x\in \buil(G)}\nil_x(\Gamma),
$$
that is, the nilpotent support is the union of the local nilpotent supports, and  $\nil(\Gamma)$ is an invariant of the $G$-orbit of $\Gamma$.  One may alternately restrict this union to one over the points in a fundamental domain for the action of $G$ on $\buil(G)$.



By Remark~\ref{rem:coverage}, when the rank of $G$ is greater than $1$, not all nilpotent orbits will occur as some $\cO(\Gamma,x)$ for a given point $x\in \buil(G)$, so $\nil_x(\Gamma)\neq \nil(\Gamma)$ in general.
  Even when these sets are equal, as for $\SL(2,\ratk)$ (see Proposition~\ref{P:asymptotic_orbit_invariance}), they are interesting subsets of the nilpotent cone (see Lemma~\ref{L:NilGamma}).

On the other hand, the asymptotic cone on an element $\Gamma$ is defined in \cite[Def 3.9]{AdamsVogan2021} analytically as follows.

\begin{definition}
Let $\Gamma\in \g^*$. The \emph{asymptotic cone} on $\Gamma$ is the set
$$
\cone(\Gamma) = \{X\in \g^* \mid \exists \ep_i \to 0, \ep_i \in \ratk^\times, \exists g_i\in G, \lim_{i\to\infty}\ep_i^2 {\Ad}^*(g_i)\Gamma = X\}.
$$
\end{definition}

This is a closed, nonempty union of nilpotent orbits of $G$ on $\g^*$. 

\begin{proposition}\label{P:cone=nil}
Let $\Gamma\in \g^*$.  Then the nonzero $G$-orbits occurring in the asymptotic cone of $\Gamma$ are those  in its nilpotent support, that is, $$\cone(\Gamma) = \bigcup_{\cO \in \nil(\Gamma)} \cO \cup \{0\}.$$ 
\end{proposition}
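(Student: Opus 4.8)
The plan is to prove the two inclusions separately, in each case passing between the geometric description of degenerate cosets in terms of Moy--Prasad filtrations and the analytic limit defining $\cone(\Gamma)$. Throughout I identify $\g$ and $\g^*$ using the invariant form, and I use the basic dictionary recorded before the statement: $\cone(\Gamma)$ is a closed union of nilpotent orbits, $\nil(\Gamma)$ is $G$-orbit-invariant and equals $\bigcup_x \nil_x(\Gamma)$, and for a degenerate coset $\Gamma + \g^*_{x,d+}$ (with $d = d_x(\Gamma) < d(\Gamma)$) the orbit $\cO(\Gamma,x)$ is the unique minimal nilpotent orbit meeting that coset. The key technical ingredient on the geometric side is DeBacker's building set $\buil_r(X,H,Y)$ together with the identity $\buil_r(X,H,Y) = \buil_0(X,H,Y) + \tfrac r2 \mu$ from Remark~\ref{rem:coverage}, which lets one convert a filtration statement at a single point into a statement about $\Ad^*(g)\Gamma$ scaled by powers of the uniformizer.

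For the inclusion $\cO \subset \cone(\Gamma)$ for each $\cO \in \nil(\Gamma)$: pick $x$ and $g$ with $d_x({}^g\Gamma) = d < d(\Gamma)$ and $\cO = \cO({}^g\Gamma,x)$. Choose $X \in \cO$ lying in the coset ${}^g\Gamma + \g^*_{x,d+}$, complete $X$ to an $\sltwo$-triple $(X,H,Y)$ with adapted one-parameter subgroup $\mu$, and arrange (replacing $x$ by a point of $\buil_d(X,H,Y)$, using that on a degenerate coset the minimal orbit is represented there) that $X \in \g^*_{x,d}$, $H \in \g_{x,0}$, $Y \in \g^*_{x,-d}$. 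Since ${}^g\Gamma - X \in \g^*_{x,d+}$, applying $\mu(\p^{-n})$ (i.e.\ scaling by $\p^{-2n}$ on the $X$-eigenspace) and using the filtration-shift properties of $\mu$, one checks that $\Ad^*(\mu(\p^{-n})){}^g\Gamma = \p^{-2n}X + (\text{terms in }\g^*_{x,d+})$ after suitable renormalization; taking $\ep_i = \p^{n_i}$ with $n_i \to \infty$ and $g_i = \mu(\p^{-n_i})g$ gives $\lim \ep_i^2 \Ad^*(g_i)\Gamma = X$, so $X \in \cone(\Gamma)$, and $\cone(\Gamma)$ being a closed union of orbits forces all of $\cO$ in.

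For the reverse inclusion, suppose $X \neq 0$ lies in $\cone(\Gamma)$, witnessed by $\ep_i \to 0$ and $g_i \in G$ with $\ep_i^2 \Ad^*(g_i)\Gamma \to X$. Fix any point $y$ in the building. For large $i$, $\ep_i^2 \Ad^*(g_i)\Gamma$ lies in a small neighbourhood of $X$; choosing $y$ adapted to $X$ as above, $X \in \g^*_{y,d_y(X)}$ and the nearby elements $\ep_i^2 \Ad^*(g_i)\Gamma$ lie in $\g^*_{y,d_y(X)}$ and share the same image in $\g^*_{y,d_y(X)}/\g^*_{y,d_y(X)+}$ as $X$ for $i$ large (since that quotient is discrete/finite-dimensional over $\resk$ and the topology is the $\p$-adic one). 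Undoing the scaling, $\Ad^*(g_i)\Gamma \in \g^*_{y,t_i}$ for $t_i = d_y(X) - 2\val(\ep_i) \to \infty$, so $d_{g_i^{-1}y}(\Gamma) \to \infty > d(\Gamma)$ would contradict the finiteness of $d(\Gamma)$ unless $\Gamma$ is nilpotent; assuming $\Gamma$ not nilpotent, this says more carefully that there is a point $z = g_i^{-1}y$ where $\Gamma$ has depth $< d(\Gamma)$, hence the coset $\Gamma + \g^*_{z,d_z(\Gamma)+}$ is degenerate, and $X$ (up to $G$-conjugacy and up to passing into the closure) lies in $\cO(\Gamma,z) \in \nil(\Gamma)$; the minimality property of $\cO(\Gamma,z)$ and closedness of the union on the right then finish it. The nilpotent case $\Gamma \in \nilp^*$ is immediate since both sides equal $G\cdot\Gamma \cup \{0\}$.

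The main obstacle is the second inclusion: converting the analytic limit into a single degenerate coset at a single point requires controlling, uniformly in $i$, the building point at which the scaled elements $\ep_i^2\Ad^*(g_i)\Gamma$ acquire the ``correct'' depth and reduction, and then arguing that the limiting orbit is exactly $\cO(\Gamma,z)$ rather than merely something in its closure. The cleanest route is probably to choose $y$ in DeBacker's $\buil_\bullet(X,H,Y)$, exploit that $\buil_r$ is obtained from $\buil_0$ by translation along $\mu$, and invoke the defining property $\cO(X,x) = G\cdot X$ for $x \in \buil_r(X,H,Y)$ to pin down the orbit; the compatibility of limits with the discreteness of the Moy--Prasad quotients is what makes the passage rigorous.
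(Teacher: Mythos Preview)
Your first inclusion has a genuine gap. Writing ${}^g\Gamma = X + W$ with $W \in \g_{x,d+}$, you want $\ep^2\,\Ad(\mu(\ep^{-1}))({}^g\Gamma) = X + \ep^2\,\Ad(\mu(\ep^{-1}))W \to X$. But a generic $W \in \g_{x,d+}$ has components in every $H$-weight space; a weight-$2$ component contributes a nonzero constant to the limit, and a weight $>2$ component makes the sequence diverge. The phrase ``filtration-shift properties of $\mu$'' does not rescue this: conjugation by $\mu(\p^{-n})$ moves the base point along $\mu$ in the building, so it carries $\g_{x,d+}$ to $\g_{\mu(\p^{-n})x,d+}$, not back into $\g_{x,d+}$. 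The paper fixes exactly this point by invoking DeBacker's straightening lemma \cite[Lemma 5.2.1]{DeBackerNilpotent2002}:
\[
X + \g_{x,d+} = \Ad(G_{x,0+})\bigl(X + C_{\g_{x,d+}}(Y)\bigr),
\]
so after a further conjugation by some $h \in G_{x,0+}$ one has ${}^{hg}\Gamma = X + C$ with $C \in C_\g(Y)$. Since $C_\g(Y)$ is spanned by lowest-weight vectors for $\ad(H)$, every component of $C$ has weight $\leq 0$, and now $\ep^2\,\Ad(\mu(\ep^{-1}))C \to 0$ honestly.

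For the second inclusion your outline is essentially the paper's, but two points need repair. First, the sign: $\ep_i \to 0$ means $\val(\ep_i) \to +\infty$, so $t_i = d_y(X) - 2\val(\ep_i) \to -\infty$; this does not contradict finiteness of $d(\Gamma)$ but is precisely what makes the coset at $z = g_i^{-1}y$ degenerate. Second, and more substantively, minimality only gives $\cO(\Gamma,z) \subset \overline{G\cdot X}$, which is the wrong direction for placing $G\cdot X$ itself in $\nil(\Gamma)$. Your final paragraph already names the correct fix, and it is exactly the paper's argument: take $y = x \in \buil_0(X,H,Y)$, note that $(\ep_i^{-2}X, H, \ep_i^{2}Y)$ is again an $\sltwo$-triple with $\buil_{-2\val(\ep_i)}(\ep_i^{-2}X,H,\ep_i^{2}Y) = \buil_0(X,H,Y) \ni x$, and conclude from DeBacker's defining property that $\cO({}^{g_i}\Gamma, x) = G\cdot(\ep_i^{-2}X) = G\cdot X$ on the nose, not merely up to closure.
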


\begin{proof}
It suffices to prove this result for $\Gamma \in\g$, where we may apply the theory of $\sltwo$ triples.

Let $\Gamma \in \g$ have depth $r\leq \infty$ and let $\mathcal{O}\in \nil(\Gamma)$.  Then there exists $x\in \buil(G)$ and $d<r$ such that $d_x(\Gamma)=d$ and $\mathcal{O}=\mathcal{O}(\Gamma,x)$.  Choose a representative 
$$
X\in \mathcal{O}(\Gamma,x) \cap (\Gamma+\g_{x,d+}).
$$
Choose an $\sltwo$ triple $(X,H,Y)$ and the corresponding one-parameter subgroup $\mu$ adapted to $X$.
By \cite[Lemma 5.2.1]{DeBackerNilpotent2002}, we have
$$
X + \g_{x,d+} = \Ad(G_{x,0+})(X+C_{\g_{x,d+}}(Y)).
$$
Therefore there exist $g \in G_{x,0+}$ and $C\in C_{\g_{x,d+}}(Y)$ for which
$$
\Gamma = \Ad(g^{-1})(X+C).
$$
Note that $C_{\g}(Y)$ is spanned by the lowest weight vectors of $\mathrm{ad}(H)$, so we may decompose $C=\sum_{i\leq 0}C_i$ where $\Ad(\mu(t))C_i = t^iC_i$ for all $t\in \ratk^\times$.  Similarly, for all $t\in \ratk^\times$ we have $\Ad(\mu(t))X = t^2X$.  
Therefore 
$$
\lim_{t\to 0} t^2\Ad(\mu(t^{-1})g)\Gamma =
\lim_{t\to 0} t^2\Ad(\mu(t^{-1}))(X+C)=
X
$$
so $X\in \cone(\Gamma)$.  Since $\cone(\Gamma)$ is $G$-invariant, we deduce $\mathcal{O}\subset \cone(\Gamma)$.

Conversely, let $X\in \cone(\Gamma)$ be nonzero, so that there exists a sequence of elements $\ep_i \in \ratk^\times$, with $\ep_i\to 0$, and a sequence of elements $g_i \in G$, such that
$$
\lim_{i\to \infty} \ep_i^2 \Ad(g_i)\Gamma = X. 
$$
Complete $X$ to an $\sltwo$ triple $(X,H,Y)$ and choose a point $x\in \buil_0(X,H,Y)$.   Since the given sequence converges to $X$, it enters the neighbourhood $X+\g_{x,0+}$ so we may choose $i\in \mathbb{N}$ such that
$$
\ep_i^2\Ad(g_i)\Gamma \in X + \g_{x,0+}.
$$
It follows that $\Ad(g_i)\Gamma \in \ep_i^{-2}X + \g_{x,-2\val(\ep_i)+}$, a nontrivial degenerate coset of depth $-2\val(\ep_i)$. Since $(\ep_i^{-2}X, H, \ep_i^2Y)$ is again an $\sltwo$ triple and $\buil_{-2\val(\ep_i)}(\ep_i^{-2}X, H, \ep_i^2Y) = \buil_0(X,H,Y)$, we infer that the minimal nilpotent orbit meeting this coset is $\Ad(G)(\ep_i^{-2}X) = \Ad(G)X$.    Thus $\Ad(G)X=\mathcal{O}({}^{g_i}\Gamma,x)\in \nil_x(\Gamma)\subset \nil(\Gamma)$, as required.
\end{proof}

\subsection{Connection with the wave front set of a positive depth representation}
Suppose now that $\pi$ is an irreducible admissible representation of $G$ of depth $r$ with good minimal $K$-type $\Gamma$ of depth $-r$  (in the sense of \cite[Def 2.4.3, 2.4.6]{KimMurnaghan2003}).  Then, under suitable hypotheses (that are satisfied if $\ratk$ has characteristic zero and the exponential map converges on $\g_{0+}$), Kim and Murnaghan prove a version of the local character expansion that is valid on the strictly larger neighbourhood $\g^{\reg}_r$.  The $\Gamma$-asymptotic expansion \cite[Thm 5.3]{KimMurnaghan2003} asserts that there exist complex coefficients $c_{\orb'}(\pi)$ such that for any $X\in \g^{\reg}_r$ we have 
\begin{equation}\label{E:KM}
\Theta_\pi(\exp(X)) = \sum_{\orb' \in \setorbitGamma} c_{\orb'}(\pi) \widehat{\mu_{\orb'}}(X),
\end{equation} 
where $\setorbitGamma$ denotes the set of $G$-orbits in $\g^*$ with $\Gamma$ in their closure, and for $\orb'\in \setorbitGamma$, $\widehat{\mu_{\orb'}}$ denotes the Fourier transform of the corresponding orbital integral \eqref{E:orbitalintegral}.

This yields a special case of interest: that of the expansion \eqref{E:KM} having a single nonzero term  $c_{\orb'}(\pi)\widehat{\mu_{\orb'}}$ corresponding to $\orb'=G\cdot \Gamma$.  We claim this happens, for example, when $G'=\Cent_G(\Gamma)$ is compact-mod-centre, such as when $\Gamma$ is a regular element.
 Namely, let $\g'$ denote the Lie algebra of $G'$. 
Then the set $\setorbitGamma$ indexing the sum in \eqref{E:KM} is in bijective correspondence with the set of nilpotent $G'$-orbits in $(\g')^*$, which is the singleton $\{G\cdot\Gamma\}$  under this hypothesis.

\begin{theorem}\label{T:nil=wf}
Let $\pi$ be an irreducible representation of $G$ of depth $r>0$, and let $\Gamma \in \g^*$ be a good minimal $K$-type of $\pi$ such that $\pi$ admits a $\Gamma$-asymptotic expansion.    Suppose further that this expansion has a unique nonzero term, corresponding to the Fourier transform of the orbital integral corresponding to $\Gamma$ itself.  
Then $\WF(\pi)$ coincides with the maximal elements of $\nil(\Gamma)$; that is, the asymptotic cone on $\Gamma$ is the wave front set of $\pi$.
\end{theorem}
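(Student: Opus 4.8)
The plan is to relate the two sides of the $\Gamma$-asymptotic expansion \eqref{E:KM} to those of the ordinary local character expansion \eqref{E:LCE}, and then translate the resulting equality of nilpotent orbital-integral coefficients into the statement about $\nil(\Gamma)$ using Proposition~\ref{P:cone=nil}. First I would invoke the hypothesis: the $\Gamma$-asymptotic expansion of $\Theta_\pi$ on $\g^{\reg}_r$ reduces to the single term $c_{G\cdot\Gamma}(\pi)\widehat{\mu_{G\cdot\Gamma}}(X)$, with $c_{G\cdot\Gamma}(\pi)\neq 0$. On the smaller domain $\g^{\reg}_{r+}\subset\g^{\reg}_r$, this same function must also be given by the Harish-Chandra--Howe expansion \eqref{E:LCE}, namely $\sum_{\orb\in\setorbit}c_\orb(\pi)\widehat{\mu_\orb}(X)$. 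So the key identity to exploit is
\begin{equation*}
c_{G\cdot\Gamma}(\pi)\,\widehat{\mu_{G\cdot\Gamma}}(X) = \sum_{\orb\in\setorbit}c_\orb(\pi)\,\widehat{\mu_\orb}(X)
\qquad\text{for all }X\in\g^{\reg}_{r+}.
\end{equation*}

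The crux is then a homogeneity/density argument to extract from this identity the fact that $\WF(\pi)$ is exactly the set of maximal orbits in $\cone(G\cdot\Gamma)$. I would expand $\widehat{\mu_{G\cdot\Gamma}}$ itself via the local character expansion of orbital integrals: by the work of DeBacker on homogeneity (the same circle of ideas used for \eqref{E:LCE}), on $\g^{\reg}_{r+}$ one has $\widehat{\mu_{G\cdot\Gamma}}(X) = \sum_{\orb}\Gamma_{\orb}\,\widehat{\mu_\orb}(X)$ for constants $\Gamma_\orb$, and the maximal orbits with $\Gamma_\orb\neq 0$ are precisely the maximal nilpotent orbits in the closure of the coadjoint orbit $G\cdot\Gamma$ — which, by the very definition of $\cone$, are the maximal orbits of $\cone(G\cdot\Gamma)$, hence by Proposition~\ref{P:cone=nil} the maximal elements of $\nil(\Gamma)$. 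Comparing with the expansion of $\Theta_\pi\circ\exp$ and using the linear independence of the distributions $\widehat{\mu_\orb}$ on a neighbourhood of $0$ (the basis property stated after \eqref{E:orbitalintegral}), the top-degree coefficients must match: $c_\orb(\pi) = c_{G\cdot\Gamma}(\pi)\,\Gamma_\orb$ for $\orb$ maximal. Since $c_{G\cdot\Gamma}(\pi)\neq 0$, the maximal orbits with $c_\orb(\pi)\neq 0$ coincide with the maximal orbits with $\Gamma_\orb\neq 0$, i.e.\ $\WF(\pi)$ equals the set of maximal elements of $\nil(\Gamma)$.

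I expect the main obstacle to be making the ``top-degree coefficients match'' step rigorous: the distributions $\widehat{\mu_\orb}$ are not literally linearly independent as functions on all of $\g^{\reg}$, and one must work homogeneity-degree by homogeneity-degree, using that $\widehat{\mu_\orb}$ is homogeneous of degree $-\dim\orb/2$ under the dilation action and that among orbits of a fixed (maximal) dimension the Fourier transforms of the orbital integrals are linearly independent near $0$. In other words, the comparison of \eqref{E:KM} restricted to $\g^{\reg}_{r+}$ with \eqref{E:LCE} gives an equality of invariant distributions supported near $0$, and one peels off the leading homogeneous piece to conclude. A secondary technical point is that one is comparing the expansion of $\widehat{\mu_{G\cdot\Gamma}}$ centered at $0$ with the geometric description of $\cone(G\cdot\Gamma)$; this is exactly where Proposition~\ref{P:cone=nil} (together with the elementary fact that $\cone$ of a semisimple orbit is the closure of the nilpotent part of that orbit's closure) does the bookkeeping, so once that identification is in hand the argument closes. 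The parenthetical ``that is, the asymptotic cone on $\Gamma$ is the wave front set of $\pi$'' then follows because the wave front set is by definition the closure of the union of the orbits in $\WF(\pi)$, and $\cone(\Gamma) = \bigcup_{\orb\in\nil(\Gamma)}\orb\cup\{0\}$ is the closure of the union of its maximal orbits.
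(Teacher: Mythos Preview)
Your overall strategy---compare the $\Gamma$-asymptotic expansion with the local character expansion on $\g^{\reg}_{r+}$, reduce to the Shalika germ expansion $\widehat{\mu_{G\cdot\Gamma}} = \sum_\orb \Gamma_\orb\,\widehat{\mu_\orb}$, and then use linear independence of the $\widehat{\mu_\orb}$ to conclude $c_\orb(\pi) = c_{G\cdot\Gamma}(\pi)\Gamma_\orb$---is exactly the framework the paper uses. The linear independence step is also fine (no homogeneity gymnastics are needed: the $\widehat{\mu_\orb}$ are already linearly independent as distributions on $\g_{r+}$, so all coefficients match, not just the top ones).

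However, there is a genuine gap at the heart of your argument. You assert that the maximal orbits with $\Gamma_\orb\neq 0$ are ``the maximal nilpotent orbits in the closure of the coadjoint orbit $G\cdot\Gamma$,'' and then identify these with $\cone(\Gamma)$ via what you call an ``elementary fact.'' But $\Gamma$ is semisimple (it is a good minimal $K$-type), so by definition its orbit $G\cdot\Gamma$ is \emph{closed} in the rational topology: its closure contains no nonzero nilpotent elements at all. The asymptotic cone $\cone(\Gamma)$ is defined by limits of \emph{rescaled} conjugates $\ep_i^2\,{}^{g_i}\Gamma$, not by limits of conjugates themselves; it is not the nilpotent part of $\overline{G\cdot\Gamma}$. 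So the bridge you build from the Shalika germs to $\nil(\Gamma)$ collapses. This is precisely the step the paper singles out as ``the key to the proof'': that the maximal orbits in the Shalika germ expansion of $\mu_{G\cdot\Gamma}$ are the maximal orbits of $\nil(\Gamma)$.

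The paper proves this key step directly, without passing through closures or $\cone$. After taking inverse Fourier transforms to obtain the equality of distributions $t\mu_{G\cdot\Gamma} = \sum_{\cO}c_\cO(\pi)\mu_\cO$ (valid on test functions factoring through $\g^*/\g^*_{x,-r}$, by DeBacker's homogeneity), it evaluates both sides on characteristic functions $\triv_\xi$ of degenerate cosets $\xi = X+\g^*_{x,d+}$ chosen so that $\cO=\cO(X,x)$. The observation $\mu_{\cO'}(\triv_\xi)=0 \iff \xi\cap\cO'=\emptyset$, combined with DeBacker's minimality of $\cO(X,x)$, lets one isolate each maximal $\cO\in\WF(\pi)$ and read off $\mu_{G\cdot\Gamma}(\triv_\xi)\neq 0$, i.e.\ $G\cdot\Gamma$ meets $\xi$, i.e.\ $\cO\in\nil(\Gamma)$; and conversely. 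This test-function argument is what replaces your unjustified geometric claim, and it uses the definition of $\nil(\Gamma)$ in terms of degenerate cosets rather than Proposition~\ref{P:cone=nil}.
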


The following proof was communicated to me by  Fiona Murnaghan. 

\begin{proof}
We are given that on $\g^{\reg}\cap\g_r$,   $\Theta_\pi\circ \exp = t\widehat{\mu_{G\cdot\Gamma}}$ for some nonzero scalar $t$. 
Applying the inverse Fourier transform to the local character expansion \eqref{E:LCE} of $\Theta_\pi$ (which is valid on  the smaller set $\g^{\reg}\cap\g_{r+}$)
we may write the equality of distributions  
\begin{equation}\label{E:germ}
t\mu_{G\cdot \Gamma}=\sum_{\cO \in \setorbit} c_{\cO}(\pi) \mu_\cO
\end{equation}
which by \cite[Cor 3.4.6]{DeBackerHomogeneity2002} holds in particular for all compactly supported functions on $\g^*/\g^*_{x,-r}$, for any $x \in \buil(G)$.  So let $x \in \buil(G)$ and let $d$ be such that $\g_{x,d+}^*\supset \g_{x,-r}^*$.  Given a nonzero coset $\xi\in \g^*_{x,d}/\g^*_{x,d+}$ let $\triv_\xi$ denote the characteristic function of this subset of $\g^*$.  Note that if $X\in \xi\cap \cO$ for some (not necessarily nilpotent) $G$-orbit $\cO$, then this intersection contains the open set $G_{x,0+}\cdot X$ as well.  Thus we have
\begin{equation}\label{*}
\mu_\cO(\triv_\xi) = 0 \iff \xi\cap\cO=\emptyset.
\end{equation}


Now suppose that $\cO \in \setorbit$, and choose $x\in \buil(G)$ and $\xi=X+\g^*_{x,d+}$ with $\g^*_{x,d+}\supset \g^*_{x,-r}$ with the property that $\cO = \cO(X,x)$.  The minimality of $\cO(X,x)$ proven by DeBacker implies that any nilpotent orbit $\cO'$ meeting $\xi$ (or equivalently, by \eqref{*}, satisfying $\mu_{\cO'}(\triv_\xi)\neq 0$) must contain $\cO$ in its closure.

Suppose first that $\cO$ is not in the wave front set $\cup_{\cO' \in \WF(\pi)}\overline{\cO'}$ of $\pi$.  Let $\cO'\in \setorbit$ be such that $c_{\cO'}(\pi)\neq 0$; then $\cO'$ is in the wave front set, so  $\cO\not\subset\overline{\cO'}$.  This implies by the preceding paragraph that 
$\mu_{\cO'}(\triv_\xi) = 0$.  As this holds for all such $\cO'$, we conclude from \eqref{E:germ} that $\mu_{G\cdot\Gamma}(\triv_\xi)=0$, whence  by \eqref{*} we have $\xi\cap G\cdot \Gamma = \emptyset$, and thus $\cO\notin \nil(\Gamma)$.  It follows that every $\cO\in \nil(\Gamma)$ lies in the wave front set of $\pi$.


Now suppose  $\cO \in \WF(\pi)$; that is,  it is maximal among nilpotent orbits with nonzero coefficient in \eqref{E:germ}.  Thus the preceding argument implies $\mu_{\cO'}(\triv_\xi)=0$ for all $\cO'\neq \cO$ in the wave front set.  Therefore \eqref{E:germ} yields $t\mu_{G\cdot \Gamma}(\triv_\xi)=c_{\cO}(\pi) \mu_{\cO}(\triv_\xi) \neq 0$, so by \eqref{*}, $\xi$ must meet $G\cdot \Gamma$ and thus $\cO \in \nil(\Gamma)$. 
Hence, the maximal elements of $\nil(\Gamma)$ coincide with $\WF(\pi)$.
\end{proof}

In fact, the key to the proof is that the maximal nilpotent orbits occuring in the Shalika germ expansion of $\mu_{G\cdot \Gamma}$ are the maximal orbits of $\nil(\Gamma)$.  

In \cite{CiubotaruOkada2023}, Ciubotaru and Okada obtain a similar result  directly, by analysing the asymptotic nilpotent cone of the characters of $G_{x,r}/G_{x,r+}$ appearing in $\pi^{G_{x,r+}}$.  

\begin{remark}
One might ask if Theorem~\ref{T:nil=wf} could be extended to show that $\WF(\pi)$ is the union of the nilpotent supports of the maximal orbits occurring in the $\Gamma$-asymptotic expansion \eqref{E:KM}.  The answer is expected to be negative.  In the supercuspidal case, the key result is \cite[Cor 10.2.3(1)]{Spice2021}, which implies that this latter set of orbits (in $\setorbitGamma$)  corresponds exactly to $\WF(\pi^0)$ (in $\setorbit$ for $G^0=\Cent_G(\Gamma)^\circ$), where $\pi^0$ is the associated depth-zero supercuspidal representation of $G^0$.  Cheng-Chiang Tsai\footnote{private correspondence and forthcoming work} has  constructed explicit examples of supercuspidal representations where the wave front set does not follow such a pleasant inductive structure. In  effect, one expects that when substituting Shalika germ expansions into the $\Gamma$-asymptotic expansion, cancellations among coefficients may occur.
\end{remark}

While the proof of Theorem~\ref{T:nil=wf} entails some additional hypotheses on $\ratk$, a consequence of the main theorem of Section~\ref{S:posdepth} is that, for $G=\SL(2,\ratk)$, the conclusion of the theorem holds whenever the characteristic and residual characteristic of $\ratk$ are not $2$.  

\section{Nilpotent orbits and nilpotent cones of \texorpdfstring{$G=\SL(2,F)$}{SL(2,F)}}\label{S:sl2nil}

For the rest of this paper we suppose that $\bG=\SL(2)$ and $\g = \mathfrak{sl}(2,F)$.  In this section, we derive some additional properties of the nilpotent support of an element $\Gamma \in \g^*$. 
We identify $\g$ and $\g^*$ with the trace form.

There are five nilpotent orbits: the zero orbit, and four two-dimensional principal (or regular) orbits that are in bijection with  the rational square classes $F^\times/(F^\times)^2$.  
Representatives of these five orbits in $\g$ are
\begin{equation}\label{E:nilpotentorbits}
\dot{X}_u=\mat{0&u\\0&0}
\end{equation}
where $u$ runs over the set $\{0,1,\ep,\varpi,\ep\varpi\}$ modulo $(F^\times)^2$ and $\ep\in \RR^\times$ is a fixed nonsquare.  For each $u$, write $\orb_u$ for the orbit in $\g^*$ corresponding to $\dot{X}_u$.
The following proposition relaxes the conditions for identifying the orbits in the nilpotent support of an element $\Gamma$.

\begin{proposition}\label{P:asymptotic_orbit_invariance}
Let $\g=\sltwo$ and $\Gamma \in \g^*\setminus\{0\}$.  Set $r=d(\Gamma)\in \mathbb{R}\cup \{\infty\}$.   Then 
\begin{enumerate}[(a)]
\item every $\orb\in\nil(\Gamma)$ meets $\Gamma+\g^*_{x,r}$ for some $x\in \buil(G)$ such that $d_x(\Gamma)<r$; 

\item  for each $x\in\buil(G)$ such that $d_x(\Gamma)<r$, if $\Gamma+\g^*_{x,r}$ meets a nilpotent orbit $\orb$, then $\orb\in\nil(\Gamma)$;

\item for each $x\in \buil(G)$, $\nil(\Gamma) = \{ \orb({}^g\Gamma, x) \mid g\in G\}=\nil_x(\Gamma)$, that is, every nonzero nilpotent orbit in $\cone(\Gamma)$ 
appears in the local nilpotent support at every $x$.
\end{enumerate}
\end{proposition}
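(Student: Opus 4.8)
The plan rests on two features special to $\mathbf{G}=\SL(2)$: in $\sltwo$ every nonzero nilpotent element is regular, so the four principal nilpotent orbits are pairwise incomparable and each has (rational) closure equal to itself together with $\{0\}$; and for a regular nilpotent $X$ with $\mathfrak{sl}_2$-triple $(X,H,Y)$, DeBacker's building set $\buil_s(X,H,Y)$ is a single point for every $s\in\real$, sweeping out the whole apartment of $\Cent_G(\mu)$ (with $\mu$ adapted to $X$) as $s$ varies. If $\Gamma$ is nilpotent the statement is trivial ($r=\infty$, $\g^*_{x,\infty}=\{0\}$, and $\nil(\Gamma)=G\cdot\Gamma=\nil_x(\Gamma)$ for all $x$), so I assume $\Gamma$ regular semisimple, $r=d(\Gamma)<\infty$, and work with $\Gamma\in\g$ via the trace form. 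Part (b) is then immediate: if a nilpotent orbit $\orb$ meets $\Gamma+\g^*_{x,r}$ with $d_x(\Gamma)<r$, then $\orb\ne\{0\}$ (otherwise $\Gamma\in\g^*_{x,r}$), so $\orb$ is principal; since $\g^*_{x,r}\subseteq\g^*_{x,d_x(\Gamma)+}$, it meets the degenerate coset $\Gamma+\g^*_{x,d_x(\Gamma)+}$, so by DeBacker's minimality $\cO(\Gamma,x)\subseteq\overline{\orb}=\orb\cup\{0\}$; as $\cO(\Gamma,x)$ is principal (a degenerate coset of nonzero depth avoids $0$) and principal orbits are incomparable, $\cO(\Gamma,x)=\orb\in\nil(\Gamma)$.

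For (a), given $\orb=\cO(\Gamma,x)$ with $d:=d_x(\Gamma)<r$, I will show $\orb$ already meets the smaller coset $\Gamma+\g^*_{x,r}$ at this same $x$. As in the proof of Proposition~\ref{P:cone=nil}, using \cite[Lemma~5.2.1]{DeBackerNilpotent2002}, choose $X\in\orb\cap(\Gamma+\g_{x,d+})$ and an $\mathfrak{sl}_2$-triple $(X,H,Y)$, and write $\Gamma=\Ad(g)(X+C)$ with $g\in G_{x,0+}$ and $C\in C_{\g_{x,d+}}(Y)$. Since $Y$ is regular nilpotent, $C_{\g}(Y)=FY$, so $C=\gamma Y$; normalizing $[X,Y]=H$ gives $\det(X+\gamma Y)=-\gamma$, hence $\gamma=-\det\Gamma$ by invariance of $\det$. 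Here the key point is $\val(\det\Gamma)=2r$, which I will record separately (it follows from goodness of $\Gamma$ — every regular semisimple element of $\sltwo$ is good when $p\ne2$ — or directly from the explicit description of $d(\Gamma)$ in this section). Since $\cO(X,x)=\cO(\Gamma,x)=G\cdot X$, the point $x$ lies in $\buil_d(X,H,Y)$, where $d_x(Y)=-d$; thus $d_x(C)=\val(\gamma)+d_x(Y)=2r-d>r$, so $C\in\g_{x,r}$. As $G_{x,0+}$ stabilizes $\g_{x,r}$, we get $\Ad(g)X=\Gamma-\Ad(g)C\in\Gamma+\g_{x,r}$ with $\Ad(g)X\in\orb$, proving (a).

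For (c), recall $\nil_x(\Gamma)\subseteq\nil(\Gamma)$ always, and $\nil_x(\Gamma)$ depends only on the $G$-orbit of $x$ (from $\cO(\Gamma,x)=\cO({}^g\Gamma,gx)$); since $G$ is transitive on apartments, every point of $\buil(G)$ is $G$-conjugate to a point of the standard apartment, so it suffices to show $\orb\in\nil_{v}(\Gamma)$ for every $\orb\in\nil(\Gamma)$ and every point $v$ of the standard apartment. Fix $\orb\in\nil(\Gamma)$; by Proposition~\ref{P:cone=nil}, $\orb\subseteq\cone(\Gamma)$, so, choosing a representative $X_0\in\orb$ with an $\mathfrak{sl}_2$-triple $(X_0,H_0,Y_0)$ aligned with the diagonal torus, there are $\ep_i\to0$ in $F^\times$ and $g_i\in G$ with $\ep_i^2\,\Ad(g_i)\Gamma\to X_0$. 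I then adapt the converse part of the proof of Proposition~\ref{P:cone=nil}: for fixed $s$ and $i$ large, $\ep_i^2\Ad(g_i)\Gamma\in X_0+\g_{v_s,s+}$, where $v_s$ is the unique point of $\buil_s(X_0,H_0,Y_0)$; hence $\Ad(g_i)\Gamma\in\ep_i^{-2}X_0+\g_{v_s,\,s-2\val(\ep_i)+}$, a degenerate coset (for $i$ large, $s-2\val(\ep_i)<r$), and since $v_s\in\buil_{s-2\val(\ep_i)}(\ep_i^{-2}X_0,H_0,\ep_i^2Y_0)$, DeBacker's theorem \cite[\S5]{DeBackerNilpotent2002} gives that the minimal nilpotent orbit meeting this coset is $G\cdot(\ep_i^{-2}X_0)=\orb$; as this coset is also $\Ad(g_i)\Gamma+\g_{v_s,\,s-2\val(\ep_i)+}$, we conclude $\orb=\cO(\Ad(g_i)\Gamma,v_s)=\cO(\Gamma,g_i^{-1}v_s)$, so $\orb\in\nil_{g_i^{-1}v_s}(\Gamma)=\nil_{v_s}(\Gamma)$. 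As $s$ ranges over $\real$ the points $v_s$ exhaust the standard apartment (vertices of both colours and all edge points), giving $\orb\in\nil_v(\Gamma)$ for every such $v$ and completing (c).

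The step I expect to be the crux is the input that $\buil_s(X_0,H_0,Y_0)$ is a single point tracing out an apartment: this is exactly the rank-one phenomenon, and its failure in higher rank is what Remark~\ref{rem:coverage} records. A secondary point that must be pinned down is the identity $\val(\det\Gamma)=2r$ used in (a), which is where goodness of $\Gamma$ — automatic for $\SL(2)$ when $p\ne2$ — enters, and which also ties this proposition to the explicit computation of $\nil(\Gamma)$ carried out in the remainder of the section.
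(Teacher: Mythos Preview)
Your proof is correct, but for parts (a) and (c) it takes a different, more elaborate route than the paper.  For (b) the two arguments coincide.

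For (a), the paper's argument is a one-liner based on the Adler--DeBacker identity $\g^*_r = \bigcap_{x}(\g^*_{x,r}+\nilp^*)$ recalled in Section~\ref{S:Nilpotent}: since $\Gamma\in\g^*_r$, the coset $\Gamma+\g^*_{x,r}$ already contains some nonzero nilpotent $Y$, and as $\g^*_{x,r}\subset\g^*_{x,d_x(\Gamma)+}$, the uniqueness of the principal orbit meeting the degenerate coset forces $Y\in\orb$.  Your $\mathfrak{sl}_2$-triple computation via Lemma~5.2.1 of \cite{DeBackerNilpotent2002} and the identity $\val(\det\Gamma)=2r$ is correct and in fact yields a slightly sharper conclusion (the nilpotent lies in $\Gamma+\g^*_{x,r+}$, since $2r-d>r$), but it requires verifying that $x$ can be arranged to lie in $\buil_d(X,H,Y)$ and pinning down the depth identity you flag; the paper avoids both.

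For (c), the paper argues directly in a single apartment: given $\orb=\orb(\Gamma,y)$ with $\dot{X}\in\g_\alpha$, it picks $g\in G$ with $gx=y-\ell\alpha^\vee$ in $\apart(G,S)$, checks that $\g_{y,d}\subset\g_{gx,d-2\ell}$ and that $d_{gx}(\dot{\Gamma})=d_{gx}(\dot{X})=d-2\ell$, and concludes $\orb=\orb({}^{g^{-1}}\dot{\Gamma},x)\in\nil_x(\Gamma)$.  You instead route through Proposition~\ref{P:cone=nil} and the convergence definition of the asymptotic cone, using that the building sets $\buil_s(X_0,H_0,Y_0)$ are single points sweeping out the standard apartment as $s$ varies.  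This is valid (and there is no circularity, since Proposition~\ref{P:cone=nil} is proven in general before this section), but it is heavier: it invokes limits and the full cone proposition where the paper only needs a translation along one coroot.  The payoff of your approach is that it makes the role of the rank-one phenomenon you identify (the building sets exhausting the apartment) explicit, whereas the paper's translation argument encodes the same fact more implicitly.
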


\begin{proof}
The first two statements use that there are no closure relations between the principal orbits of $\sltwo$, and so the uniqueness of the minimal nilpotent orbit meeting any degenerate coset implies that any nontrivial degenerate coset meets only one nilpotent orbit.   

For (a), suppose $\orb \in \nil(\Gamma)$; then $\orb= \orb(\Gamma,x)$ for some $x\in \buil(G)$, implying $d_x(\Gamma)<r$.  Since $\Gamma\in \g^*_{r}\subset \g^*_{x,r}+\nilp^*$, the set $\Gamma+\g^*_{x,r}$ contains a (nonzero) nilpotent element $Y$.  Since $Y\in \Gamma+\g^*_{x,r}\subset \Gamma+\g^*_{x,d_x(\Gamma)}$, it lies in $\orb$, so $\orb$ meets the smaller coset, as required.

For (b), note that  if $d_x(\Gamma)<r$ then 
$0\notin \Gamma+\g^*_{x,r}\subset \Gamma+\g^*_{x,d(\Gamma)+}$;  any nilpotent orbit meeting the smaller set meets the larger one, and thus by uniqueness this orbit is $\orb(\Gamma,x)\in\nil(\Gamma)$.  

To prove (c), let $x\in \buil(G)$ and let $\nil_{x}(\Gamma)$ be the local nilpotent support of $\Gamma$ at $x$; we have already noted that $\nil_x(\Gamma)\subset \nil(\Gamma)$. 
The reverse inclusion follows from the one-dimensionality of $\buil(G)$. 
Let $\orb \in \nil(\Gamma)$; then $\orb = \orb(\Gamma,y)$  for some $y\in \buil(G)$.
Let $S$ be a split torus with associated root system $\Phi(G,S)=\{\pm \alpha\}$ such that $y\in \apart(G,S)$.    

Set $d=d_y(\Gamma)$ and let $\dot{\Gamma}\in \g$ correspond to $\Gamma$ via the trace form.  Choose $\dot{X}\in \orb$ such that $\dot{\Gamma}\in \dot{X}+\g_{y,d+}$. Conjugating both $\dot{\Gamma}$ and $\dot{X}$ by $G_y$ as necessary we may assume $\dot{X}\in \g_\alpha$. 
 Relative to the pinning of a fixed base point, we have the decomposition of $\RR$-modules
  $$
\g_{y,d} = \g_{-\alpha, d+\alpha(y)}\oplus \LieS_d \oplus \g_{\alpha,d-\alpha(y)}.
$$
Let $\alpha^\vee$ denote the positive coroot, and choose $g\in G$ so that 
$gx\in \apart(G,S)$ and $gx=y-\ell\alpha^\vee$ for some $\ell\geq 0$. Therefore if $d' = d-2\ell$ then $\g_{y,d}\subset \g_{gx,d'}$.
Since $\dot{X}\in \g_{\alpha,d-\alpha(y)}\setminus \g_{\alpha,(d-\alpha(y))+}$ and $d'-\alpha(gx)=d-\alpha(y)$, we conclude  $d_{gx}(\dot{\Gamma})=d_{gx}(\dot{X})=d'$ and $\dot{\Gamma}-\dot{X}\in \g_{gx,d'+}$.  By uniqueness, we infer that $\orb = \orb(\dot{\Gamma},gx)=\orb({}^{g^{-1}} \dot{\Gamma},x) \in \nil_x(\dot{\Gamma})$, yielding the result.
\end{proof}

We next determine $\nil(\Gamma)$ explicitly, for any $\Gamma \in \g=\sltwo$ (identified with its dual via the trace form).  There is nothing to do if $\Gamma$ is nilpotent.  If $\Gamma \neq 0$ is semisimple, then it is  $G$-conjugate to a matrix of the form
\begin{equation} \label{E:form}
\algy(u,v)=\mat{0&u\\v&0},
\end{equation}
for some $u,v\in\ratk^\times$.
Its centralizer is a maximal torus.   
 There is one $G$-conjugacy class of split torus, represented by any diagonal element, and two classes of unramified anisotropic tori, represented by $\algy(1,\ep)\in \g$ and $\algy(\varpi^{-1},\ep\varpi)\in\g$, respectively.  The classes of ramified tori are represented by $\algy(1,t)\in \g$ with $t\in \{\varpi, \ep\varpi, \ep^2\varpi, \ep^3\varpi\}$, noting that if $-\ep\in \ratk^2$ then there are only two classes. 

We can now describe the nilpotent support of each such element, using the parametrization given in \eqref{E:nilpotentorbits}.
 
\begin{lemma}\label{L:NilGamma}
Let $G=\SL(2,\ratk)$ and $\Gamma\in \g\setminus\{0\}$ semisimple.  
   If $\Gamma$ splits over $\ratk$, then $$\nil(\Gamma)=\{\orb_1, \orb_\ep, \orb_\varpi, \orb_{\ep\varpi}\}.$$ Otherwise,  $\Gamma$ is conjugate to $\algy(u,v)$ for some $u,v \in \ratk^\times$, and 
splits over $\extk = \ratk[\sqrt{uv}]$.  Let $\Norm_{\extk/\ratk}(\extk^\times)/(\ratk^\times)^2$ be represented by $\{1,\gamma\}$.  Then $u$ and $v$ are uniquely defined mod $\Norm_{\extk/\ratk}(\extk^\times)$ and
$$
\nil(\Gamma) = \{\orb_u, \orb_{u\gamma}\}.
$$
\end{lemma}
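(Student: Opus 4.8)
The plan is to compute $\cone(\Gamma)$ explicitly and then read off $\nil(\Gamma)$ from Proposition~\ref{P:cone=nil}, which identifies $\nil(\Gamma)$ with the set of nonzero orbits occurring in $\cone(\Gamma)$; since the only closure relation among the five nilpotent orbits is $\{0\}\subseteq\overline{\orb_t}$, it suffices to decide, for each of the four nonzero square classes $t$, whether $\dot X_t\in\cone(\Gamma)$. As a preliminary I record how to read the orbit of a nonzero nilpotent $\dot Y=\smat{p&q\\r&-p}\in\g$ (so $p^2+qr=0$) off its entries: if $q\neq 0$, conjugating by a lower triangular unipotent to kill the diagonal shows $\dot Y\in\orb_q$; if $q=0$ then $p=0$, and $\smat{0&0\\r&0}$ is Weyl conjugate to $\dot X_{-r}$, so $\dot Y\in\orb_{-r}$.

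Next I compute, for $g=\smat{a&b\\c&d}\in\SL(2,\ratk)$, the matrix ${}^g\Gamma$. In the split case we may take $\Gamma=\diag(\delta,-\delta)$, and then ${}^g\Gamma=\smat{(ad+bc)\delta & -2ab\delta\\ 2cd\delta & -(ad+bc)\delta}$; in particular ${}^{n(s)}\Gamma=\smat{\delta & -2s\delta\\ 0 & -\delta}$ for $n(s)=\smat{1&s\\0&1}$, so for each $t\in\ratk^\times$ the choices $\ep=\p^{N}$ and $s=-t\p^{-2N}/(2\delta)$ give $\ep^2\,{}^{n(s)}\Gamma=\smat{\p^{2N}\delta & t\\ 0 & -\p^{2N}\delta}\to\dot X_t$ as $N\to\infty$. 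Hence every nonzero nilpotent orbit lies in $\cone(\Gamma)$ and $\nil(\Gamma)=\{\orb_1,\orb_\ep,\orb_\p,\orb_{\ep\p}\}$.

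For $\Gamma=\algy(u,v)$ one finds ${}^g\Gamma=\smat{bvd-acu & a^2u-b^2v\\ d^2v-c^2u & acu-bvd}$, and the key point is that the off-diagonal entries are norms from $\extk=\ratk[\sqrt{uv}]$: $u(a^2u-b^2v)=\Norm_{\extk/\ratk}(ua+b\sqrt{uv})$ and $v(d^2v-c^2u)=\Norm_{\extk/\ratk}(vd+c\sqrt{uv})$; moreover both are nonzero because $uv\notin(\ratk^\times)^2$. Since $-uv=\Norm_{\extk/\ratk}(\sqrt{uv})$, the set $u\,\Norm_{\extk/\ratk}(\extk^\times)=-v\,\Norm_{\extk/\ratk}(\extk^\times)$ is precisely $u(\ratk^\times)^2\cup u\gamma(\ratk^\times)^2$. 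For the inclusion $\nil(\Gamma)\subseteq\{\orb_u,\orb_{u\gamma}\}$: given $0\neq X=\lim_i\ep_i^2\,{}^{g_i}\Gamma\in\cone(\Gamma)$, the entry of $X$ that determines its orbit (namely $X_{12}$, or $-X_{21}$ when $X_{12}=0$) is a limit of elements of the open and closed set $u\,\Norm_{\extk/\ratk}(\extk^\times)$, hence lies in it, so $X\in\orb_u\cup\orb_{u\gamma}$ by the preliminary. For the reverse inclusion, fix $z_0\in\extk^\times$ with $\Norm_{\extk/\ratk}(z_0)$ in a prescribed square class of $u\,\Norm_{\extk/\ratk}(\extk^\times)/(\ratk^\times)^2$, write $z_0=ua_0+b_0\sqrt{uv}$ with $a_0,b_0\in\ratk$, put $a=\p^{-N}a_0$, $b=\p^{-N}b_0$, and choose $c,d\in\ratk$ with $ad-bc=1$ and $\val(c),\val(d)\geq N-C$ for a constant $C=C(z_0)$ (such a solution exists). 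Then $\ep=\p^{N}$ gives $\ep^2\,{}^g\Gamma\to\dot X_{u^{-1}\Norm_{\extk/\ratk}(z_0)}$ as $N\to\infty$, so both $\orb_u$ and $\orb_{u\gamma}$ lie in $\cone(\Gamma)$. Finally, the asserted uniqueness of $u$ and $v$ modulo $\Norm_{\extk/\ratk}(\extk^\times)$ is the standard fact that regular semisimple elements with characteristic polynomial $T^2-uv$ form exactly two $\SL(2,\ratk)$-orbits, separated by the coset $u\,\Norm_{\extk/\ratk}(\extk^\times)$, which is itself visible from the entry formula above.

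The step I expect to be the main obstacle is the reverse inclusion in the anisotropic case: one must arrange $c,d$ (the ``small'' solution of $ad-bc=1$ relative to the ``large'' pair $(a,b)$) together with the scaling $\ep=\p^{N}$ so that, uniformly as $N\to\infty$, the $(1,2)$-entry of $\ep^2\,{}^g\Gamma$ remains a fixed nonzero element of the desired square class while the diagonal and $(2,1)$-entries tend to $0$. A secondary point requiring care is the local constancy of the square-class map $\ratk^\times\to\ratk^\times/(\ratk^\times)^2$ used in the upper bound, and the short verification of the uniqueness statement.
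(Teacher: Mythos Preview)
Your proof is correct, and the core computation---that the off-diagonal entries of any $G$-conjugate of $\algy(u,v)$ lie in $u\Norm_{\extk/\ratk}(\extk^\times)$---is the same as the paper's.  The route, however, is genuinely different.  The paper invokes Proposition~\ref{P:asymptotic_orbit_invariance}(c) (an $\SL(2)$-specific fact) to reduce to the local nilpotent support at the single vertex $x_0$, and then reads off $\orb({}^g\Gamma,x_0)$ directly from the degenerate coset; for the reverse inclusion in the anisotropic case it produces explicit conjugates $\algy(u\p^{-2n},v\p^{2n})$, $\algy(-v,-u)$, $\algy(v,u)$, splitting into the ramified and unramified cases to hit both square classes.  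You instead go through Proposition~\ref{P:cone=nil} and compute $\cone(\Gamma)$ by exhibiting explicit convergent sequences $\ep_i^2\,{}^{g_i}\Gamma\to\dot X_t$; this avoids the ramified/unramified case split and does not rely on the $\SL(2)$-specific Proposition~\ref{P:asymptotic_orbit_invariance}, at the cost of the bookkeeping you flag (arranging $c,d$ small relative to $a,b$, and the closedness of $u\Norm_{\extk/\ratk}(\extk^\times)$ in $\ratk^\times$).  Both approaches are short; yours is slightly more uniform, while the paper's stays closer to the building-theoretic language used elsewhere in the article.
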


\begin{proof}
By Proposition~\ref{P:asymptotic_orbit_invariance}, we may fix the choice $x=x_0\in\buil(G)$ to be the vertex such that $\g_{x,r}$ is the set of traceless $2\times 2$ matrices with entries in $\PP^{\lceil r \rceil}$, and replace $\Gamma$ by any $G$-conjugate.

First suppose $\Gamma=\diag(a,-a)$ with $\val(a)=r$.  Let $u \in F^\times$ and note that if 
$
g_u=\smat{1&-\frac12a^{-1}u\\0&1}\in G
$ then ${}^{g_u}\Gamma = \smat{a&u\\0&-a}.$
 Therefore, for any $u$ such that $\val(u)=d<r$, we have ${}^{g_u} \Gamma \in \algy_u+\g_{x,d+}$.  Thus $\nil(\Gamma)$ contains every nonzero nilpotent orbit. 

Now suppose $\Gamma=\algy(u,v)$ for some $u,v\in F^\times$ such that $uv\notin (F^\times)^2$ and set  $E=F[\sqrt{uv}]$.
We calculate directly that the upper triangular entry of any $G$-conjugate of $\Gamma$ takes the form 
$$
u'=a^2 u - b^2 v = u(a^2-b^2vu^{-1}) \in u\Norm_{E/F}(E^\times) 
$$
for some $a,b\in F$, not both zero, from which it follows that $\nil(\Gamma)\subset \{\orb_u,\orb_{u\gamma}\}$.

For the reverse inclusion, first note that $\algy(u,v)$ is $G$-conjugate to $\algy(u\p^{-2n},v\p^{2n})$ for all $n\in \Z$ and for $n$ sufficiently large $\algy(u\p^{-2n},v\p^{2n})-\algy_{u\p^{-2n}}\in \g_{x,r}$.  Thus $\orb_u\in \nil(\Gamma)$.

Now note that when $E$ is ramified, we may take $\gamma = -uv$ so $\orb_{u\gamma}=\orb_{-v}$; since $\algy(u,v)$ is $G$-conjugate to $\algy(-v,-u)$ we are done by the preceding.  
If $E$ is unramified, we have instead $\gamma=uv$, whence $\orb_{u\gamma}=\orb_{v}$.   As $-1$ is a norm, we may choose $\alpha,\beta\in \ratk$ such that
$-1 = \beta^2-\alpha^2uv^{-1}$; then  $g  = \smat{\alpha & \beta \\ \beta & \alpha uv^{-1}} \in G$ satisfies ${}^g\algy(u,v) = \algy(v,u)$, and again by the preceding we may conclude $\orb_v \in \nil(\Gamma)$.
\end{proof}

\section{Representations of \texorpdfstring{$G_x$}{Gx} associated to nilpotent orbits} \label{S:Representations} 

\subsection{Shalika's representations of \texorpdfstring{$\SL(2,\RR)$}{SL(2,R)}}
\label{SS:Shalika}

In his thesis, Shalika constructed all irreducible representations of $K=\SL(2,\RR)$.  In this section we recap his explicit construction for the so-called ramified case, which attaches an irreducible representation of $K$ to certain $K$-orbits in $\g^*$; we'll then provide a coordinate-free generalization more suited to our needs in the next section.

Let $S$ be the diagonal split torus, $B$ the upper triangular Borel subgroup and $U$ its unipotent radical.  We use a subscript $0$ to indicate their intersections with $K$: $S_0=S\cap K$, $B_0=B\cap K$ and $U_0=U\cap K$. Let $x_0\in \apart(G,S)$ be such that $K=G_{x_0}$ and $z_0$ the barycentre of the positive alcove adjacent to $x_0$ (relative to $B$).

Let $d$ be a positive integer.  Choose $u \in \PP^{-d}\setminus \PP^{-d+1}$ and nonzero $v\in \PP^{-d+1}$ and consider the anti-diagonal matrix $\algy := \algy(u,v)\in \g_{x_0,-d}$ of \eqref{E:form}.
Identify this with the element $\dualy\in \g^*_{x_0,-d}$ by the rule $\dualy(Z)=\tr(\algy Z)$ for all $Z\in\g$.  If $v=0$ then $\dualy$ is nilpotent and its centralizer $C_K(\dualy)$ in $K$ coincides with $ZU_0$, where $Z=\{\pm I\}$.  Otherwise, $\dualy$ is semisimple and $C_K(\dualy)$ is a torus.  Note that every $\dualy\in \g^*_{x_0,-d}$ that represents a degenerate coset is $K$-conjugate to one of this form.

Define an open subgroup of $K$ by 
$$
J_{d} = \mat{1+\PP^{\lceil d/2 \rceil}&\PP^{\lceil d/2 \rceil} \\ \PP^{\lceil (d+1)/2\rceil} & 1+\PP^{\lceil d/2 \rceil}} \cap K.
$$
It is straightforward to verify that $\dualy$ gives a well-defined character $\eta_\dualy$ of $J_d$, trivial on $G_{x_0,d+}$, by the rule 
\begin{equation}\label{E:formula}
\eta_\dualy(g)=\psi(\tr(\algy(g-I))).
\end{equation}
This character is trivial on $K_{d+}$ and depends only on the classes $u+\PP^{\lceil (-d+1)/2\rceil}$ and $v+\PP^{\lceil -d/2 \rceil}$.
For any choice of character $\theta$ of $C_K(\dualy)$ agreeing with $\eta_\dualy(g)$ on $C_K(\dualy)\cap J_d$, 
write $\eta(\dualy,\theta)$ for the resulting extension to a character of $C_K(\dualy)J_d$.  

Then Shalika proves the following result with an intricate elementary argument \cite[Thm 4.2.1, Thm 4.2.5, \S4.3]{Shalika1966}.  

\begin{proposition} \label{P:Shalika}
Let $\dualy\in \g^*_{x_0,-d}$ be as above, and $\theta$ any character of its centralizer $C_K(\dualy)$ agreeing with $\eta_\dualy$ on $C_K(\dualy)\cap J_d$. Then the representation
$$
\Sh_{x_0}(\dualy,\theta) = \Ind_{C_K(\dualy)J_d}^K \eta(\dualy,\theta)
$$
is irreducible and independent (up to equivalence) of the choice of representative in the $K$-orbit of $\algy(u+\PP^{\lfloor(d+1)/2\rfloor},v+\PP^{\lceil (d+1)/2\rceil})$.  It is of degree $\frac12 q^{d-1}(q^2-1)$ and of depth $d$, meaning it is nontrivial on $K_d$ but trivial on $K_{d+}$.
\end{proposition}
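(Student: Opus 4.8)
The plan is to prove the four assertions in turn — that $\eta(\dualy,\theta)$ is a well-defined character of $H:=C_K(\dualy)J_d$, that $\Ind_H^K\eta(\dualy,\theta)$ is irreducible, that it depends only on the $K$-orbit of the indicated coset, and that its degree and depth are as stated — with Mackey's irreducibility criterion as the main tool (the construction being, in effect, a Stone--von Neumann / Heisenberg argument on $G_{x_0,\lceil d/2\rceil}/G_{x_0,d+}$ with $J_d$ playing the role of a polarization).

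\emph{The character.} For $g,h\in J_d$ one computes $\eta_\dualy(gh)\eta_\dualy(g)^{-1}\eta_\dualy(h)^{-1}=\psi(\tr(\algy(g-I)(h-I)))$, and a short valuation count — using that in $J_d-I$ the lower-left entry is one power of $\PP$ deeper than the other three, set against $\val(u)=-d$ and $\val(v)\geq -d+1$ — shows the argument of $\psi$ lies in $\PP$; so $\eta_\dualy$ is a homomorphism. The same estimates give triviality on $K_{d+}=G_{x_0,d+}$ and dependence on $u$ only modulo $\PP^{\lceil(-d+1)/2\rceil}$ and on $v$ only modulo $\PP^{\lceil -d/2\rceil}$. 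Since $C_K(\dualy)$ normalizes $J_d$ (a split, unramified or ramified torus of $\SL(2)$ centralizing an anti-diagonal element preserves the entrywise filtration defining $J_d$ — a direct check) and $\eta_\dualy$ agrees with $\theta$ on $C_K(\dualy)\cap J_d$, it extends to the advertised character $\eta(\dualy,\theta)$ of $H$.

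\emph{Irreducibility.} By Mackey's criterion for induction from an open subgroup of a profinite group, $\Sh_{x_0}(\dualy,\theta)$ is irreducible iff for each $g$ in a set of representatives for the nontrivial double cosets of $H\backslash K/H$ the characters $\eta(\dualy,\theta)$ and ${}^g\eta(\dualy,\theta)$ differ somewhere on $H\cap{}^gH$. Since conjugation transports the defining formula, on $J_d\cap{}^gJ_d$ these characters are $h\mapsto\psi(\tr(\algy(h-I)))$ and $h\mapsto\psi(\tr(({}^g\algy)(h-I)))$, so they coincide there exactly when $\dualy-{}^g\dualy$ annihilates the lattice spanned by $(J_d\cap{}^gJ_d)-I$; the task is to exhibit, for each such $g$, an element of that lattice on which $\dualy-{}^g\dualy$ pairs to a unit. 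When $g$ normalizes $C_K(\dualy)$ but is not in $H$ one has ${}^g\dualy=-\dualy$ (the nontrivial Weyl element acts by $-1$ on the line through $\dualy$), and one is done already on $G_{x_0,d}\subseteq J_d\cap{}^gJ_d$: there $\eta_\dualy$ and ${}^g\eta_\dualy$ are $\psi\circ\langle\dualy,\cdot\rangle$ and $\psi\circ\langle-\dualy,\cdot\rangle$, which differ because $\langle 2\dualy,\g_{x_0,d}\rangle\not\subseteq\PP$ (here $p\neq 2$), using $\dualy\in\g^*_{x_0,-d}\setminus\g^*_{x_0,(-d)+}$ and the conductor of $\psi$. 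The remaining $g$ — those that do not normalize $C_K(\dualy)$ — are the main obstacle: one must show $\dualy-{}^g\dualy$ (which lies in $\g^*_{x_0,-d}$ since $g\in K$, but may be small) is still detectable on the lop-sided lattice $(J_d\cap{}^gJ_d)-I$, and the inequalities that make this work are tight, forcing a careful case analysis in the two parities of $d$ with the bookkeeping of $\lceil d/2\rceil$ against $\lceil(d+1)/2\rceil$. This is precisely Shalika's ``intricate elementary argument'' \cite[\S4.3]{Shalika1966}, which I would either reproduce or cite directly.

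\emph{Degree, depth and independence.} The degree is the index $[K:H]=[K:J_d]/[C_K(\dualy):C_K(\dualy)\cap J_d]$; the first factor is a routine product of residue-field cardinalities read off the entrywise description of $J_d$ (via $[K:G_{x_0,d+}]=\lvert\SL(2,\RR/\PP^{d+1})\rvert$ and the count of $[J_d:G_{x_0,d+}]$), and the second is read off the Moy--Prasad filtration of $C_K(\dualy)$, which in the case recapped is a ramified torus, or $ZU_0$ when $\dualy$ is nilpotent; combining the two gives $\tfrac12 q^{d-1}(q^2-1)$ in both parities. For the depth: $\eta(\dualy,\theta)$ is trivial on the normal subgroup $K_{d+}$, hence so is $\Ind_H^K\eta(\dualy,\theta)$; and since $K_d\subseteq J_d\subseteq H$ and $\eta_\dualy|_{K_d}=\psi\circ\langle\dualy,\cdot\rangle$ is nontrivial (again as $\dualy\notin\g^*_{x_0,(-d)+}$), the subgroup $K_d$ is not in the kernel, so the depth is exactly $d$. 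Finally, independence within the $K$-orbit of $\algy(u+\PP^{\lfloor(d+1)/2\rfloor},v+\PP^{\lceil(d+1)/2\rceil})$ has two parts: moving $u,v$ inside those cosets does not change $\eta_\dualy$ (by the first step, since $\PP^{\lfloor(d+1)/2\rfloor}\subseteq\PP^{\lceil(-d+1)/2\rceil}$ and $\PP^{\lceil(d+1)/2\rceil}\subseteq\PP^{\lceil -d/2\rceil}$), hence does not change $\Sh_{x_0}(\dualy,\theta)$; and replacing $(\dualy,\theta)$ by a $K$-conjugate of the same anti-diagonal shape gives an equivalent induced representation by transport of structure, which becomes transparent in the coordinate-free reformulation of the next subsection.
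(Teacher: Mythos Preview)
The paper does not give its own proof of this proposition: immediately preceding the statement it says only ``Then Shalika proves the following result with an intricate elementary argument \cite[Thm 4.2.1, Thm 4.2.5, \S4.3]{Shalika1966}.'' So there is no argument in the paper to compare against beyond the citation itself. Your proposal is consistent with this --- you too defer the core irreducibility step to Shalika --- but you add a genuine sketch (the cocycle vanishing for $\eta_\dualy$, the Mackey criterion, the Weyl-element case, the depth and degree counts). In that sense you are doing strictly more than the paper does, and in the same spirit.

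One imprecision worth flagging. In your degree paragraph you write that $C_K(\dualy)$ ``in the case recapped is a ramified torus, or $ZU_0$ when $\dualy$ is nilpotent.'' That is not right: with $\val(u)=-d$ and $v\in\PP^{-d+1}$ nonzero, the square class of $uv$ is unconstrained, so $C_K(\dualy)$ can equally well be (the $K$-points of) a split or an unramified anisotropic torus. The phrase ``the so-called ramified case'' in the paper refers to Shalika's name for this family of \emph{representations} (those of positive depth), not to the splitting behaviour of the centralizing torus. The index $[C_K(\dualy):C_K(\dualy)\cap J_d]$ does come out the same in each case --- which is part of what Shalika verifies --- but your identification of the torus type should not be asserted, and your degree sketch should either treat the cases or simply invoke Shalika as the paper does.
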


\subsection{Irreducible representations of \texorpdfstring{$G_x$}{Gx} parametrized by degenerate cosets at \texorpdfstring{$x$}{x}} \label{S:gx}

Our goal in this section is to 
give a coordinate-free interpretation of Shalika's construction that allows us to unambiguously attach representations of $G_x$ to any degenerate coset of negative depth. 

Note that $\GL(2,\ratk)$ acts on $\buil(G)$, and all vertices are conjugate under this action.  This conjugacy does not in general preserve the $\SL(2,\ratk)$-orbit of $\Gamma$ or $X$.  

\begin{example}\label{E:omegaeta}
Let $x_0, z_0$ be as in Section~\ref{SS:Shalika} and $x_1$ the other vertex of the chamber containing $x_0$ in its closure.
The element $\omega = \smat{0 & 1 \\ \varpi & 0}$ used in \cite{Nevins2005} is an affine reflection such that $\omega\cdot x_0 = x_1$, and ${}^\omega \algy(u,v) = \algy(\varpi^{-1}v, \varpi u)$.  Thus in particular in the case of nilpotent orbits, where $\algy(0,1)\sim \algy(-1,0)$, we have ${}^\omega \cO_1 = \cO_{-\varpi}$.
On the other hand, the element $\eta = \smat{1 & 0\\0&\varpi}$ used in \cite{Nevins2013} is a translation such that $\eta\cdot x_0 = x_1$, but now ${}^\eta \algy(u,v) = \algy(\varpi^{-1}u,\varpi v)$ so  ${}^\eta \cO_1 = \cO_{\varpi}$
instead.
\end{example}

We begin by showing that any degenerate coset determines a chamber of $\buil(G)$ adjacent to $x$.  

\begin{lemma}\label{L:CGamma}
Let $G=\SL(2,\ratk)$.  Let $x\in \buil(G)$ be any vertex and let $\Gamma \in \g^*_{x,-d}\setminus \g^*_{x,-d+}$ represent a degenerate coset for some  $d>0$.  
Then there exists a unique chamber $\mathcal{C}=\mathcal{C}_\Gamma$ of $\buil(G)$ adjacent to $x$, independent of the choice of representative of $\Gamma+\g^*_{x,-d+}$,  such that for any $z\in \mathcal{C}$ we have $\Gamma \in \g^*_{x,-d}\cap\g^*_{z,-d+}$.  Moreover, we have $\Cent_{G_x}(\Gamma)=\Cent_{G_z}(\Gamma)$.
\end{lemma}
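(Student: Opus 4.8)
The plan is to reduce everything to the standard apartment and translate it into $\resk$-linear algebra inside $\mathfrak{sl}_2(\resk)$. Throughout I identify $\g^*$ with $\g=\sltwo$ via the trace form, as elsewhere in the paper, and write $\dot\Gamma\in\g$ for the element representing $\Gamma$. Since $\buil(G)$ is the $(q+1)$-regular tree, the vertex $x$ lies in the closure of exactly $q+1$ open chambers (edges), and $G_{x,0}/G_{x,0+}\cong\SL(2,\resk)$ permutes them transitively, compatibly with its transitive action on $\mathbb P^1(\resk)$. Because $d$ is a positive integer, $\g_{x,-d}/\g_{x,-d+}\cong\mathfrak{sl}_2(\resk)$ as $\SL(2,\resk)$-modules, and the hypothesis that $\Gamma$ represents a degenerate coset of depth $-d$ says exactly that the image $\bar\Gamma$ of $\dot\Gamma$ in this quotient is a nonzero nilpotent element (equivalently, $\bar\Gamma^2=0$): if $\dot Y$ is a nilpotent element of the coset then $\varpi^{d}\dot Y\in\RR\text{-coefficients}$ reduces to a nonzero nilpotent equal to $\bar\Gamma$. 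Conjugating by $G_{x,0}$ I may then assume $x=x_0$ is the standard vertex and $\dot\Gamma=\algy(u,v)$ with $\val(u)=-d$ and $\val(v)\ge -d+1$ (or $v=0$), which affects none of the assertions.

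Next I would attach to each open chamber $\mathcal C$ adjacent to $x$, and each $z\in\mathcal C$, the subspace $V_{\mathcal C}\subseteq\g_{x,-d}/\g_{x,-d+}\cong\mathfrak{sl}_2(\resk)$ that is the image of $\g_{z,-d+}\cap\g_{x,-d}$. A short Moy--Prasad computation for the standard chamber $\mathcal{C}=[x_0,x_1]$ inside $\apart(G,S)$ ($S$ the diagonal torus) shows that $V_{[x_0,x_1]}$ is independent of $z$ and equals the image of the root subspace $\g_\alpha$, a line spanned by a nonzero nilpotent; the same computation gives $\g_{x,-d+}\subseteq\g_{z,-d+}$ for $z\in[x_0,x_1]$. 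Both facts then hold for every adjacent chamber by $G_{x_0,0}$-equivariance of the Moy--Prasad data. Since $\SL(2,\resk)$ acts transitively on the $q+1$ adjacent chambers and on the $q+1$ lines of nonzero nilpotents of $\mathfrak{sl}_2(\resk)$ (both $\SL(2,\resk)$-identified with $\mathbb P^1(\resk)$), the assignment $\mathcal C\mapsto V_{\mathcal C}$ is a bijection onto the set of nilpotent lines. Moreover, for $z\in\mathcal C$ one has $\dot\Gamma\in\g_{z,-d+}$ if and only if $\bar\Gamma\in V_{\mathcal C}$, the ``if'' direction using $\g_{x,-d+}\subseteq\g_{z,-d+}$. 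As $\bar\Gamma$ is a nonzero nilpotent it lies on exactly one nilpotent line, so there is a unique adjacent chamber $\mathcal C=\mathcal C_\Gamma$ with $\dot\Gamma\in\g_{x,-d}\cap\g_{z,-d+}$ for all $z\in\mathcal C_\Gamma$; and since $\bar\Gamma$ depends only on the coset, so does $\mathcal C_\Gamma$. (For the existence half alone one may instead just verify directly that $\algy(u,v)$ lies in $\g_{z,-d+}$ for every $z$ in the open edge $[x_0,x_1]$.)

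For the centralizer equality: the chamber $\mathcal C_\Gamma$ is characterized among the chambers adjacent to $x$ by a condition invariant under $\Cent_{G_x}(\Gamma)$ (if $g\in G_x$ fixes $x$ and ${}^g\Gamma=\Gamma$, then $g\cdot\mathcal C_\Gamma$ again satisfies that condition, so equals $\mathcal C_\Gamma$ by uniqueness), so every $g\in\Cent_{G_x}(\Gamma)$ stabilizes $\mathcal C_\Gamma$ setwise; as $g$ also fixes the endpoint $x$ of this edge, it fixes both endpoints, hence fixes $\mathcal C_\Gamma$ pointwise, i.e.\ $g\in G_z$ for $z\in\mathcal C_\Gamma$. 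This gives $\Cent_{G_x}(\Gamma)\subseteq\Cent_{G_z}(\Gamma)$, and the reverse inclusion is immediate since the stabilizer of an interior point of an edge fixes the whole edge, so $G_z\subseteq G_x$. The step I expect to require the most care is making the existence/uniqueness argument run uniformly over all $q+1$ chambers rather than just the two lying in a single apartment, and this is exactly where the transitivity of $\SL(2,\resk)$ on $\mathbb P^1(\resk)$ — equivalently, on the lines of nilpotents of $\mathfrak{sl}_2(\resk)$ — is doing the real work.
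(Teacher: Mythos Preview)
Your argument is correct and takes a genuinely different route from the paper's.  The paper proves existence by choosing a nilpotent $\dot X$ in the coset, completing it to an $\mathfrak{sl}_2$-triple, and invoking DeBacker's slice lemma \cite[Lemma~5.2.1]{DeBackerNilpotent2002} to arrange $\dot\Gamma\in\dot X+\g_{-\alpha}\cap\g_{x,-d+}$ inside a well-chosen apartment; uniqueness is handled by a separate geodesic argument in the tree; and the centralizer equality is obtained from the explicit identification $\Cent_G(\dot X)=U_\alpha$ together with the inclusion $\Cent_{G_x}(\dot\Gamma)\subset\Cent_{G_x}(\dot X)G_{x,0+}$.  You instead set up an $\SL_2(\resk)$-equivariant bijection between the $q+1$ chambers adjacent to $x$ and the $q+1$ nilpotent lines in $\g_{x,-d}/\g_{x,-d+}\cong\mathfrak{sl}_2(\resk)$, so that existence and uniqueness are simultaneously encoded by the fact that the nonzero nilpotent $\bar\Gamma$ lies on exactly one such line; and you derive the centralizer equality from uniqueness itself (any $g\in\Cent_{G_x}(\Gamma)$ must stabilize the distinguished edge, hence fix it pointwise).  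Your approach is more elementary---it avoids $\mathfrak{sl}_2$-triples and DeBacker's lemma entirely and gives a cleaner centralizer argument---while the paper's approach yields the extra structural information that $\dot\Gamma$ is $G_{x,0+}$-conjugate to an element of $\dot X+\g_{-\alpha}$, and sits more naturally in the framework that would be needed in higher rank.  One small wording issue: ``Conjugating by $G_{x,0}$ I may then assume $x=x_0$'' conflates two reductions (transport of structure to the standard vertex, then conjugation within $G_{x_0,0}$ to normalize $\dot\Gamma$); this is harmless but worth separating.
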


\begin{proof}
Uniqueness is immediate:  given $z'$ in any other chamber adjacent to $x$, the geodesic from $z$ to $z'$ contains $x$; hence $\g^*_{z,-d+}\cap \g^*_{z',-d+}\subset \g^*_{x,-d+}$, so does not contain $\Gamma$. 
Identify $\Gamma$ with an element $\dot{\Gamma}\in \g_{x,-d}$ via the trace form.  Choose a nilpotent element $\dot{X}\in \dot{\Gamma}+\g_{x,-d+}$.  By \cite[\S 5]{DeBackerNilpotent2002}, we may complete $\dot{X}$ to an $\sltwo$-triple $\{\dot{X}, \dot{H}\in \g_{x,0}, \dot{Y}\in \g_{x,d}\}$ and find a split torus $S$ and corresponding apartment $\apart(G,S)$ containing $x$, such that if $\Phi(G,S)=\{\pm \alpha\}$, then $\dot{X}\in \g_{\alpha}$ and $\dot{Y}\in \g_{-\alpha}$.  
Let $\mathcal{C}$ be the positive alcove adjacent to $x$ in this apartment.
 
Note that we have $\Cent_{\g}(\dot{Y}) = \g_{-\alpha}$. From \cite[Lemma 5.2.1]{DeBackerNilpotent2002} we know that 
$$
\dot{X}+\g_{x,-d+} = {}^{G_{x,0+}}\left(\dot{X} + \Cent_{\g_{x,-d+}}(\dot{Y}) \right);
$$
thus there exists $g\in G_{x,0+}$ such that $\dot{\Gamma}\in {}^g(\dot{X}+ \g_{-\alpha}\cap \g_{x,-d+})$.  Since $G_{x,0+}$ fixes $C$ and the coset $\dot{\Gamma}+\g_{x,-d+}$, we may without loss of generality replace the Lie triple and torus of the preceding paragraph with their $g$-conjugate, so that we have    $\dot{\Gamma} \in \dot{X}+\g_{-\alpha}\cap \g_{x,-d+}$.  
For any $z\in C$ we have $0 < \alpha(z-x) < 1$; thus since $\alpha(x), d\in \Z$ we may conclude
$$
\g_\alpha \cap \g_{x,-d} = \g_\alpha \cap \g_{z,-d+}.
 \quad \text{and}
\quad \g_{-\alpha}\cap \g_{x,-d+} = \g_{-\alpha}\cap \g_{z,-d+}.
$$
Since $\dot{\Gamma}$ lies in the sum of these two spaces we have
 $\dot{\Gamma}\in \g_{z,-d+}$, whence
$\Gamma \in \g^*_{x,-d}\cap \g^*_{z,-d+}$.  

Finally, note that $\Cent_{G}(\dot{X})=U_\alpha$ and $U_\alpha\cap G_x=U_\alpha\cap G_z$.  Since $\dot{\Gamma}\in\dot{X}+\g_{x,-d+}$, we have $\Cent_{G_x}(\dot{\Gamma}) \subset \Cent_{G_x}(\dot{X})G_{x,0+}=\Cent_{G_z}(\dot{X})G_{x,0+}\subset G_z$. 
\end{proof}

\begin{definition}\label{D:defJ}
Let $d=-d_x(\Gamma)$ be such that $\Gamma+\g^*_{x,-d+}$ is a degenerate coset.  Let $z$ be the barycentre of the associated alcove $\mathcal{C}_\Gamma$.  Define the subgroup
\begin{equation}\label{E:defJ}
J_{x,\Gamma}= \begin{cases}
G_{x,d/2} & \text{if $d=d_x(\Gamma)$ is odd};\\
G_{z,d/2} & \text{if $d$ is even}.
\end{cases}
\end{equation}
\end{definition}

Note that when $x=x_0$ and $z=z_0$ we have $J_{x,\Gamma}=J_d$.

Since $G_{x,n+}\subseteq G_{z,n} \subseteq G_{x,n}$ for any integer $n$, it follows directly that for all $d$, we have
$$
G_{x,d/2+}\subseteq J_{x,\Gamma} \subseteq G_{x,d/2}.
$$
Since $\Gamma \in \g^*_{x,-d}\cap \g^*_{z,-d+}$, it defines a character $\eta_{\Gamma}$ of $J_{x,\Gamma}$ that is trivial on $G_{x,d+}$ via the corresponding Moy--Prasad isomorphism.  
 The character depends only on the coset $\Gamma + \g_{x,-d/2}^*$ if $d$ is odd and on $\Gamma+\g^*_{z,-d/2+}$ otherwise.  Moreover, since $\Cent_{G_x}(\Gamma) = \Cent_{G_z}(\Gamma)$ we deduce directly that
$J_{x,\Gamma}$ is normalized by $C_x(\Gamma):=\Cent_{G_x}(\Gamma)$.

Thus, for any character $\theta$ of $C_x(\Gamma)$ coinciding with $\eta_{\Gamma}$ on the intersection of their domains there is a unique extension  $\eta(\Gamma,\theta)$ of $\eta_\Gamma$ to  $C_x(\Gamma)J_{x,\Gamma}$. Define
$$
\Sh_x(\Gamma,\theta) = \Ind_{C_x(\Gamma)J_{x,\Gamma}}^{G_x} \eta(\Gamma,\theta).
$$

\begin{proposition}\label{P:Gxreps}
Suppose $\Gamma$ represents a degenerate coset at a vertex $x\in \buil(G)$ and $-d=d_x(\Gamma)<0$.  Suppose $\theta$ is a character of the centralizer $C_x(\Gamma)$ of $\Gamma$ in $G_x$ defining a character $\eta(\Gamma,\theta)$ of  $C_x(\Gamma)J_{x,\Gamma}$. Then
\begin{enumerate}[(a)]
\item   $\Sh_x(\Gamma,\theta)$ is  an irreducible representation of $G_x$ of depth $d$ and degree $\frac12 q^{d-1}(q^2-1)$;  
\item  $\Sh_x(\Gamma,\theta) \cong \Sh_x(\Gamma',\theta')$ if and only if there exists $g\in G_x$ such that $\eta(\Gamma,\theta)={}^g\eta(\Gamma',\theta')$; and 
\item for any $\nu\in \GL(2,\ratk)$ we have 
\begin{equation}\label{Eq:GL2conj}
{}^\nu \Sh_{x}(\Gamma,\theta) \cong \Sh_{\nu\cdot x}({}^\nu\Gamma, {}^\nu \theta).
\end{equation}
\end{enumerate}
\end{proposition}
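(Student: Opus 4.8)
The plan is to reduce all three parts to Shalika's Proposition~\ref{P:Shalika} at the base vertex $x_0$, using that $\GL(2,\ratk)$ normalizes $\SL(2,\ratk)$ and acts transitively on the vertices of the tree $\buil(G)$. I would establish (c) first and then invoke it for (a) and (b). For (c): conjugation by $\nu\in\GL(2,\ratk)$ is a group isomorphism $G_x\xrightarrow{\sim}G_{\nu\cdot x}$ carrying $G_{x,s}$ to $G_{\nu\cdot x,s}$ and $\g_{x,s}$ to $\g_{\nu\cdot x,s}$ for every $s$, preserving nilpotency and the trace pairing, and intertwining the Moy--Prasad isomorphisms $\g_{x,s}/\g_{x,s+}\cong G_{x,s}/G_{x,s+}$. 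Hence ${}^\nu$ takes $-d=d_x(\Gamma)$ to $d_{\nu\cdot x}({}^\nu\Gamma)$, a degenerate coset to a degenerate coset, and---by the uniqueness in Lemma~\ref{L:CGamma}---the chamber $\mathcal{C}_\Gamma$ to $\mathcal{C}_{{}^\nu\Gamma}=\nu\cdot\mathcal{C}_\Gamma$; therefore it takes $J_{x,\Gamma}$ to $J_{\nu\cdot x,{}^\nu\Gamma}$, $C_x(\Gamma)$ to $C_{\nu\cdot x}({}^\nu\Gamma)$, and $\eta_\Gamma$ to $\eta_{{}^\nu\Gamma}$, so that ${}^\nu\eta(\Gamma,\theta)=\eta({}^\nu\Gamma,{}^\nu\theta)$ is well defined. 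As $\Ind$ commutes with conjugation, ${}^\nu\Sh_x(\Gamma,\theta)=\Sh_{\nu\cdot x}({}^\nu\Gamma,{}^\nu\theta)$, which is \eqref{Eq:GL2conj}.

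For (a): take first $x=x_0$ and $\Gamma$ of the normal form $\algy(u,v)\in\g^*_{x_0,-d}$ of Section~\ref{SS:Shalika}. By the remark following Definition~\ref{D:defJ} we have $J_{x_0,\Gamma}=J_d$; moreover $C_{x_0}(\Gamma)=C_K(\dualy)$, and $\eta_\Gamma=\eta_\dualy$ because the Moy--Prasad isomorphism agrees with $g\mapsto g-I$ up to higher-order terms with values in $\PP$, on which $\psi$ is trivial. Thus $\Sh_{x_0}(\Gamma,\theta)$ is exactly the representation of Proposition~\ref{P:Shalika}, hence irreducible of depth $d$ and degree $\frac12 q^{d-1}(q^2-1)$. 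For a general degenerate coset at $x_0$, every representative is $G_{x_0}$-conjugate to one in normal form (Section~\ref{SS:Shalika}), so by (c) with $\nu\in G_{x_0}$, $\Sh_{x_0}(\Gamma,\theta)$ is isomorphic to one already treated. For an arbitrary vertex $x$, choose $\nu\in\GL(2,\ratk)$ with $\nu\cdot x_0=x$ (for instance $\smat{1&0\\0&\varpi}$ carries $x_0$ to the adjacent vertex); then $\Sh_x(\Gamma,\theta)\cong{}^\nu\Sh_{x_0}({}^{\nu^{-1}}\Gamma,{}^{\nu^{-1}}\theta)$ by (c), and conjugation by $\nu$, which matches the Moy--Prasad filtrations of $G_{x_0}$ and $G_x$, preserves irreducibility, degree and depth.

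For (b): the implication ($\Leftarrow$) is formal---if $\eta(\Gamma,\theta)={}^g\eta(\Gamma',\theta')$ with $g\in G_x$ then equality of characters forces $C_x(\Gamma)J_{x,\Gamma}={}^g(C_x(\Gamma')J_{x,\Gamma'})$, so $\Sh_x(\Gamma,\theta)\cong{}^g\Sh_x(\Gamma',\theta')\cong\Sh_x(\Gamma',\theta')$. For ($\Rightarrow$) I would, via (c), conjugate $x$ to $x_0$ and then, by the normal-form reduction above, assume $\Gamma$ and $\Gamma'$ are of Shalika's form; the hypothesis becomes an equivalence $\Sh_{x_0}(\dualy,\theta)\cong\Sh_{x_0}(\dualy',\theta')$, and since Shalika's thesis gives a bijective parametrization of these irreducibles of $K$ by $G_{x_0}$-orbits of such data, the data are $G_{x_0}$-conjugate; undoing the reductions produces the required $g\in G_x$. (Self-containedly, ($\Rightarrow$) would follow from Mackey theory: irreducibility forces some ${}^g\eta(\Gamma,\theta)$ and $\eta(\Gamma',\theta')$ to agree on $W:={}^g(C_x(\Gamma)J_{x,\Gamma})\cap C_x(\Gamma')J_{x,\Gamma'}$, which contains $G_{x,d}$; comparison on $G_{x,d}$ gives ${}^g\Gamma-\Gamma'\in\g^*_{x,-d+}$, whence Lemma~\ref{L:CGamma} yields ${}^gJ_{x,\Gamma}=J_{x,\Gamma'}\subseteq W$ and then ${}^g\eta_\Gamma=\eta_{\Gamma'}$.) The delicate point---upgrading this agreement of the restricted inducing characters to a genuine $G_x$-conjugacy of the full pairs $(\Gamma,\theta)$, i.e.\ matching the torus characters---is the main obstacle, and it is where the rigidity of Shalika's construction (equivalently, his classification) is needed.
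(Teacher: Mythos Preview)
Your proposal is correct and follows essentially the same strategy as the paper: reduce everything to Shalika's result at the base vertex $x_0$ by exploiting that $\GL(2,\ratk)$ acts transitively on vertices and transports all the data ($\g^*_{x,s}$, $J_{x,\Gamma}$, $C_x(\Gamma)$, $\eta_\Gamma$) equivariantly. Your organization is slightly different---you prove (c) first and then invoke it for (a) and (b), whereas the paper establishes the $\GL(2,\ratk)$-equivariance in the course of reducing to $x_0$---and you are more explicit about (b), including the Mackey sketch and the acknowledgment that the $(\Rightarrow)$ direction ultimately rests on Shalika's classification; but the substance is the same.
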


\begin{proof}
When $x=x_0$ and $\Gamma\in \g^*$ corresponds to some $\algy(u,v)\in\g_{x_0,-d}\setminus \g_{x_0,-d+}$, then this construction coincides with Shalika's.    If $g\in G_{x}$, then ${}^gC_{x}(\Gamma) = C_{x}({}^g\Gamma)$ and ${}^gJ_{x,\Gamma} = J_{x,{}^g\Gamma}$, so we obtain the invariance of $\Sh_x(\Gamma,\theta)$ under $G_x$-conjugacy and the choice of representative of the appropriate coset of $\Gamma$.
More generally, for any $\nu\in \GL(2,\ratk)$ such that $\nu\cdot x_0 = x$, we have ${}^\nu(\g^*_{x_0,d})=\g^*_{x,d}$, ${}^\nu C_{{x_0}}(\Gamma) = C_{x}({}^\nu\Gamma)$ and ${}^\nu J_{x_0,\Gamma} = J_{x,{}^\nu\Gamma}$. Thus
$$
{}^\nu \Sh_{x_0}(\Gamma,\theta) \cong \Sh_{x}({}^\nu\Gamma, {}^\nu \theta),
$$
where we have identified a $\nu$-conjugate of a representation of $G_{x_0}$ with a representation of $G_x$ under the group isomorphism ${}^\nu G_{x_0}\cong G_x$.  Since $\GL(2,\ratk)$ acts transitively on the set of vertices of $\buil(\SL(2,\ratk))$, the rest of the statements follow from  Proposition~\ref{P:Shalika}. 
\end{proof}

The simple nature of the representations $\Sh_x(\Gamma,\theta)$  is revealed as follows.

\begin{lemma}\label{L:Shalikaequiv}
Suppose $x$ is a vertex of $\buil(G)$ and $\Gamma_1, \Gamma_2 \in \g^*_{x,-d}$ represent nonzero but degenerate cosets of $\g^*_{x,-d}/\g^*_{x,-d+}$ for some $d>0$. Suppose $s \in \real$ satisfies $\Gamma_1\in \Gamma_2 + \g^*_{x,-s}$.  Then
for any choice of characters $\theta_i$ of $C_x(\Gamma_i)$ such that the characters $\eta(\Gamma_i,\theta_i)$ agree upon restriction to $C_x(\Gamma_i)J\cap G_{x,s+}$ for $i\in \{1,2\}$, we have
\begin{equation}\label{E:restrictionsame}
\Res_{G_{x,s+}}\Sh_x(\Gamma_1,\theta_1)\cong \Res_{G_{x,s+}}\Sh_x(\Gamma_2,\theta_2).
\end{equation}
In particular, if $s\geq d/2$ 
then \eqref{E:restrictionsame} holds independent of $\theta_i$. 
\end{lemma}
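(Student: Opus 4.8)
\textbf{Proof plan for Lemma~\ref{L:Shalikaequiv}.}

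The plan is to compare the two induced representations by restricting to $G_{x,s+}$ and using Mackey theory, exploiting that both $\Sh_x(\Gamma_i,\theta_i)$ are induced from characters of subgroups that contain $G_{x,d/2+}$ and hence lie inside (or interact controllably with) $G_{x,s+}$. Write $J=J_{x,\Gamma}$; note that by Lemma~\ref{L:CGamma} the subgroup $J$ and the chamber $\mathcal{C}_\Gamma$ depend only on $\Gamma+\g^*_{x,-d+}$, so since $\Gamma_1,\Gamma_2$ represent cosets of the same space and $s>0$ forces $\Gamma_1\equiv\Gamma_2\pmod{\g^*_{x,-d+}}$ (as $-s\le -d$ would be needed otherwise — I should check the relation $\Gamma_1\in\Gamma_2+\g^*_{x,-s}$ together with $\Gamma_i\in\g^*_{x,-d}\setminus\g^*_{x,-d+}$ actually forces $s\ge d$, hence $s>0$ suffices to put them in the same coset mod $\g^*_{x,-d+}$ only when $s\ge d$; more carefully, one only needs $\mathcal{C}_{\Gamma_1}=\mathcal{C}_{\Gamma_2}$, which holds once $s\ge d$, and for $s<d$ the statement is vacuous since both restrictions are to all of $G_{x,0+}$-ish — I'll handle the ranges of $s$ explicitly), both inductions use the same $J$.

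First I would reduce to computing $\Res_{G_{x,s+}}\Ind_{C_x(\Gamma_i)J}^{G_x}\eta(\Gamma_i,\theta_i)$ via the Mackey formula, writing $G_x$ as a disjoint union of double cosets $C_x(\Gamma_i)J\backslash G_x / G_{x,s+}$. Since $G_{x,s+}$ is normal in $G_{x,0}$ (indeed in a large subgroup), the double cosets are indexed by $C_x(\Gamma_i)J\backslash G_x/G_{x,s+} \cong (C_x(\Gamma_i)JG_{x,s+})\backslash G_x$, and the restricted representation decomposes as $\bigoplus_{g} \Ind_{G_{x,s+}\cap {}^g(C_x(\Gamma_i)J)}^{G_{x,s+}} {}^g\eta(\Gamma_i,\theta_i)$. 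The key point is then: (i) the set of double coset representatives $g$ can be chosen the same for $i=1$ and $i=2$, because $C_x(\Gamma_1)J$ and $C_x(\Gamma_2)J$ generate the same subgroup modulo $G_{x,s+}$ — this uses that $\Gamma_1,\Gamma_2$ lie in a common coset so their centralizers agree modulo the relevant filtration step, via the argument at the end of the proof of Lemma~\ref{L:CGamma} showing $\Cent_{G_x}(\dot\Gamma)\subset \Cent_{G_x}(\dot X)G_{x,0+}$; and (ii) for each such $g$, the characters ${}^g\eta(\Gamma_1,\theta_1)$ and ${}^g\eta(\Gamma_2,\theta_2)$ agree on $G_{x,s+}\cap {}^g(C_x(\Gamma_i)J)$. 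Claim (ii) follows from the hypothesis that $\eta(\Gamma_1,\theta_1)$ and $\eta(\Gamma_2,\theta_2)$ agree on $C_x(\Gamma_i)J\cap G_{x,s+}$ after conjugating, together with the formula \eqref{E:formula}: on $J\cap G_{x,s+}$, $\eta_{\Gamma_i}(g)=\psi(\tr(\dot\Gamma_i(g-I)))$ and $\dot\Gamma_1-\dot\Gamma_2\in\g_{x,-s}$ pairs trivially into $\PP$ against $g-I\in\g_{x,s+}$, so the characters literally coincide there regardless of $\theta_i$; combined with the hypothesis on the torus parts, the characters agree on the whole intersection.

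The main obstacle I anticipate is (i): matching the double-coset/Mackey data for the two subgroups $C_x(\Gamma_1)J$ and $C_x(\Gamma_2)J$, i.e.\ showing $C_x(\Gamma_1)J\cdot G_{x,s+} = C_x(\Gamma_2)J\cdot G_{x,s+}$ as subsets of $G_x$, and that the intersections with $G_{x,s+}$ of conjugates also match. This is really a statement that the pair $(C_x(\Gamma_i), \eta_{\Gamma_i})$ depends, modulo $G_{x,s+}$, only on the coset $\Gamma_i+\g^*_{x,-s}$ — plausible because the centralizer of a semisimple element and of a nearby element agree up to deep filtration subgroups (a standard consequence of the exponential/Moy--Prasad isomorphism and the fact that $\Gamma_i$ is, up to $G_{x,0+}$-conjugacy, in a split or anisotropic torus with explicitly understood filtration), but it requires care about ramified versus unramified tori and the parity of $d$ (which governs whether $J$ sits at $x$ or at the barycentre $z$). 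For the final sentence, when $s\ge d/2$ we have $J\subseteq G_{x,d/2}\subseteq$ a group in which $C_x(\Gamma_i)J\cap G_{x,s+}$ is contained in $G_{x,s+}\cap C_x(\Gamma_i)J$ on which, by the computation above, $\eta_{\Gamma_i}$ is already determined by $\dot\Gamma_i$ alone (the $\theta_i$ only enters on $C_x(\Gamma_i)$ outside $G_{x,s+}$ when $s\ge d/2$, since $C_x(\Gamma_i)\cap G_{x,s+}\subseteq J\cap G_{x,s+}$), so the hypothesis on $\theta_i$ is automatically satisfied and \eqref{E:restrictionsame} holds unconditionally — I would verify this containment $C_x(\Gamma_i)\cap G_{x,s+}\subseteq J$ carefully as it is what makes the parenthetical "in particular" clause work.
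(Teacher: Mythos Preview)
Your Mackey-theoretic approach matches the paper's, and your handling of the ``in particular'' clause (that $s\ge d/2$ forces $G_{x,s+}\subseteq J$, whence $\theta_i$ is irrelevant on $G_{x,s+}\cap C_x(\Gamma_i)J=G_{x,s+}$) is correct. But your discussion of the ranges of $s$ is inverted. It is the case $s\ge d$ that is trivial (both restrictions are $\triv$-isotypic of equal dimension since both representations have depth $d$), whereas it is the case $s<d$ that gives $\g^*_{x,-s}\subseteq\g^*_{x,-d+}$, hence $\Gamma_1+\g^*_{x,-d+}=\Gamma_2+\g^*_{x,-d+}$ and $J_{x,\Gamma_1}=J_{x,\Gamma_2}$ via Lemma~\ref{L:CGamma}; you have this backwards.

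More substantively, your claim (i) that $C_x(\Gamma_1)J\cdot G_{x,s+}=C_x(\Gamma_2)J\cdot G_{x,s+}$ does not follow from the approximation you cite. That approximation yields $C_x(\Gamma_1)\subset C_x(\Gamma_2)G_{x,d-s}$ and conversely, hence $C_x(\Gamma_1)G_{x,d-s}=C_x(\Gamma_2)G_{x,d-s}$. When $s<d/2$ one has $G_{x,d-s}\subseteq J$, so indeed $C_x(\Gamma_1)J=C_x(\Gamma_2)J$ and your plan works (this is exactly what the paper does in that range). But when $d/2\le s<d$, instead $J\subseteq G_{x,d-s}$ and $G_{x,s+}\subseteq J$, so your claim reduces to $C_x(\Gamma_1)J=C_x(\Gamma_2)J$, which does \emph{not} follow from equality modulo the larger group $G_{x,d-s}$. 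The paper handles this range differently: since $G_{x,s+}\subseteq J$, each Mackey summand is simply the character ${}^\gamma\eta_{\Gamma_i}|_{G_{x,s+}}$, which depends only on the coset ${}^\gamma\Gamma_i+\g^*_{x,-s}$. As $\Gamma_1+\g^*_{x,-s}=\Gamma_2+\g^*_{x,-s}$, both sums run over the same $G_x$-orbit of cosets, with stabilizer the common group $C_x(\Gamma_i)G_{x,d-s}$, so regrouping over the coarser quotient $G_x/C_x(\Gamma_i)G_{x,d-s}$ shows the two multisets of characters coincide. The essential missing ingredient is this case split $s\ge d/2$ versus $s<d/2$, with genuinely distinct arguments in each.
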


\begin{proof}
For any $\Gamma_i$, the two representations have the same degree $\frac12q^{d-1}(q^2-1)$ and the same depth $d$.  If $s \geq d$ then both sides are $1$-isotypic of the same degree hence equivalent.   

Suppose $s<d$. Since $\Gamma_1\in \Gamma_2 + \g^*_{x,-s}$,  we have $C_x(\Gamma_1) \subset C_x(\Gamma_2)G_{x,d-s}$. Since $\Gamma_1\in \Gamma_2+\g^*_{x,-d+}$,  Lemma~\ref{L:CGamma} yields $J_{x,\Gamma_1}=J_{x,\Gamma_2}$; let us denote this group $J$.  Thus $\eta_{\Gamma_i}$ for $i\in \{1,2\}$ are characters of $J$  that agree on $J\cap G_{x,s+}$.

If $s\geq d/2$ then 
$G_{x,s+}\subset J$, and so $\Res_{G_{x,s+}\cap C_x(\Gamma_i)J}\; \eta(\Gamma_i,\theta_i)=\eta_{\Gamma_i}$ is independent of $\theta_i$.  
Mackey theory thus yields the decomposition
\begin{equation}\label{E:etai}
\Res_{G_{x,s+}}\Sh_x(\Gamma_i,\theta_i) \cong \bigoplus_{\gamma \in G_{x}/C_x(\Gamma_i)J} {}^\gamma\eta_{\Gamma_i}|_{G_{x,s+}}.
\end{equation}
Each $\gamma \in C_x(\Gamma_i)G_{x,d-s}/C_x(\Gamma_i)J$ fixes the character $\eta_{\Gamma_i}|_{G_{x,s+}}$.  The elements $\gamma' \in G_x/C_x(\Gamma_1)G_{x,d-s}=G_x/C_x(\Gamma_2)G_{x,d-s}$ parametrize the 
orbit of the coset $\Gamma_1+\g^*_{x,-s} = \Gamma_2+\g^*_{x,-s}$.  Thus \eqref{E:etai} gives the same sum of characters for $i\in \{1,2\}$.

If instead $s<d/2$, then $G_{x,d-s}\subseteq J$ so $C_x(\Gamma_1)J=C_x(\Gamma_2)J$.  Since $J\subseteq G_{x,s+}$, the double coset space $G_{x,s+}\backslash G_{x} /C_x(\Gamma_i) J$ is now equal to $G_x/C_x(\Gamma_i)G_{x,s+}$, and is independent of $i$. So again by Mackey theory we have
\begin{align*}
\Res_{G_{x,s+}}\Sh_x(\Gamma_i,\theta_i) 
&= \bigoplus_{\gamma \in G_x/C_x(\Gamma_i)G_{x,s+}} 
\Ind_{G_{x,s+}\cap {}^\gamma(C_x(\Gamma_i) J)}^{G_{x,s+}} {}^\gamma(\eta(\Gamma_i,\theta_i))\\
&=\bigoplus_{\gamma \in G_x/C_x(\Gamma_i)G_{x,s+}}{}^\gamma\left(\Ind_{G_{x,s+}\cap C_x(\Gamma_i) J}^{G_{x,s+}} \eta(\Gamma_i,\theta_i))\right).
\end{align*}
When the restriction of $\eta(\Gamma_i,\theta_i)$ to $G_{x,s+}\cap C_x(\Gamma_1) J=G_{x,s+}\cap C_x(\Gamma_2) J$ is independent of $i$, we infer \eqref{E:restrictionsame}.
\end{proof}

\subsection{Representations attached to nilpotent orbits}\label{attach}

Let $X\in \nilp^*\setminus\{0\}$ and let $\lambda$ be a corresponding adapted one-parameter subgroup, whose centralizer in $G$ is a maximal split torus $S$.  In fact, $S$ is generated by $S_0$ and $\lambda(\varpi)$, and $\Cent_G(X)=ZU$ where $Z$ is the center of $G$ and $B=SU$ is a Borel subgroup.  
For any vertex $x\in\buil(G)$,  applying the Cartan decomposition yields 
\begin{equation}\label{E:GGxorbit}
\cO = G\cdot X = \bigsqcup_{n\in \Z}G_x \cdot (\lambda(\varpi)^n\cdot X) = \bigsqcup_{n\in \Z}G_x \cdot (\varpi^{2n} X)
\end{equation}
as the decomposition of the $G$-orbit of $X$ into disjoint $G_x$-orbits.  

\begin{proposition}
Let $x$ be a vertex in $\buil(\bG,F)$, $\cO$  a nonzero nilpotent $G$-orbit in $\g^*$ and $\zeta$ a character of $Z$.  Let $\{X_{-d}\mid d_x(X)=-d<0\}$ be any set of representatives of the $G_x$-orbits in $\cO \setminus \g^*_{x,0}$.  Then the \emph{representation of $G_x$ attached to $\cO$ with central character $\zeta$}, given by
\begin{equation}\label{E:deftau}
\tau_x(\cO,\zeta) = \bigoplus_{d>0} \Sh_x(X_{-d},\zeta),
\end{equation}
is independent of choices up to $G_x$-equivalence.
\end{proposition}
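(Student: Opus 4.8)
The plan is to show that two different choices in the construction of $\tau_x(\cO,\zeta)$ — the choice of representatives $X_{-d}$ of the $G_x$-orbits in $\cO\cap\g^*_{x,-d}$ for each $d>0$, and implicitly the extension data packaged by $\zeta$ — produce $G_x$-equivalent representations. Since the defining formula \eqref{E:deftau} is a direct sum indexed by the depths $d>0$ of elements of $\cO$ at $x$, and since by Proposition~\ref{P:Gxreps}(a) each summand $\Sh_x(X_{-d},\zeta)$ has depth exactly $d$, the summands are pairwise non-isomorphic and the decomposition into depth-$d$ parts is canonical. Hence it suffices to fix a single value of $d$ and prove that $\Sh_x(X_{-d},\zeta)$ is independent, up to $G_x$-equivalence, of the choice of representative $X_{-d}$ within a fixed $G_x$-orbit in $\cO\cap(\g^*_{x,-d}\setminus\g^*_{x,-d+})$.

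First I would observe that, by \eqref{E:GGxorbit}, the set of $G_x$-orbits appearing among the $\{X_{-d}\}$ is itself canonically determined by $\cO$ and $x$: these are exactly the orbits $G_x\cdot(\varpi^{2n}X)$, and two of them have the same depth at $x$ only if they coincide (one checks from the Cartan decomposition that $d_x(\varpi^{2n}X)$ is strictly monotonic in $n$ for $n$ large, and that only finitely many lie outside $\g^*_{x,0}$; in fact exactly one $G_x$-orbit occurs for each admissible depth). Thus the indexing set of \eqref{E:deftau} is well-defined, and the only freedom is the choice of a point in each such orbit. Next, fix $d$ and suppose $X_{-d}$ and $X_{-d}'$ are two representatives of the same $G_x$-orbit, say $X_{-d}' = {}^g X_{-d}$ for some $g\in G_x$. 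Since both are nilpotent, they certainly define degenerate cosets at $x$, and the centralizer $C_x(X_{-d})=\Cent_{G_x}(X_{-d})$ equals $ZU_\alpha\cap G_x$ for the relevant split torus (as in the proof of Lemma~\ref{L:CGamma}); the character $\zeta$ of $Z$ automatically agrees with $\eta_{X_{-d}}$ on $C_x(X_{-d})\cap J_{x,X_{-d}}$ since $\eta_{X_{-d}}$ is trivial on $U_\alpha\cap G_{x,d/2}$-type subgroups and $\zeta$ is the prescribed central character — so $\Sh_x(X_{-d},\zeta)$ is actually defined. Conjugating by $g$ and applying Proposition~\ref{P:Gxreps}(b) (with $\nu=g\in G_x\subset\GL(2,F)$ in part (c), which restricts to genuine $G_x$-conjugacy), we get $\Sh_x(X_{-d}',\zeta)={}^g\Sh_x(X_{-d},\zeta)\cong\Sh_x(X_{-d},\zeta)$ as representations of $G_x$, since $g$ normalizes $G_x$ and fixes $Z$ pointwise so ${}^g\zeta=\zeta$. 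Summing over $d$ gives the claimed independence.

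The one point that requires genuine care — the main obstacle — is checking that $\Sh_x(X_{-d},\zeta)$ is \emph{well-defined at all}, i.e. that the central character $\zeta$ is a legitimate choice of the datum $\theta$ in Proposition~\ref{P:Gxreps}: one must verify that $\zeta$, viewed on $Z=\{\pm I\}\subset C_x(X_{-d})$, agrees with $\eta_{X_{-d}}$ on $C_x(X_{-d})\cap J_{x,X_{-d}}$, and moreover that this pins down $\eta(X_{-d},\zeta)$ uniquely. Here one uses that for nilpotent $X_{-d}$ the centralizer $C_x(X_{-d})$ is of the form $Z\cdot(U_\alpha\cap G_x)$ (compact, with $U_\alpha\cap G_x$ a pro-$p$ group), that $\eta_{X_{-d}}$ restricted to $U_\alpha\cap J_{x,X_{-d}}$ is the trivial character (the pairing $\langle X_{-d},\,\cdot\,\rangle$ kills $\g_\alpha$), and that $Z\subset J_{x,X_{-d}}$ on which $\eta_{X_{-d}}$ is trivial; so the compatibility condition is vacuous and $\zeta$ extends $\eta_{X_{-d}}$ to $C_x(X_{-d})J_{x,X_{-d}}$ in exactly one way. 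Once this is in place, everything else is bookkeeping: the depth separation of summands, the canonical nature of the index set via \eqref{E:GGxorbit}, and the $G_x$-conjugacy invariance from Proposition~\ref{P:Gxreps}(b),(c).
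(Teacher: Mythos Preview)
Your overall approach is correct and matches the paper's: verify that $\zeta$ gives a legitimate datum $\theta$ for each $\Sh_x(X_{-d},\theta)$, then invoke Proposition~\ref{P:Gxreps} for $G_x$-conjugacy invariance. The paper's proof is in fact only three sentences long and does exactly this.

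However, there is a factual slip in your treatment of the ``main obstacle'' that, taken literally, breaks the argument. You assert $Z\subset J_{x,X_{-d}}$, but this is false: $J_{x,X_{-d}}\subset G_{x,0+}$ is a pro-$p$ group (being a Moy--Prasad filtration subgroup of positive depth), while $Z=\{\pm I\}$ has order $2$ and $p\neq 2$, so in fact $Z\cap J_{x,X_{-d}}=\{I\}$. This matters: if $Z$ really were contained in $J_{x,X_{-d}}$ and $\eta_{X_{-d}}|_Z$ were trivial, then compatibility would \emph{force} $\zeta$ to be trivial, and the construction would fail for the nontrivial central character. The correct statement --- that $Z\cap J_{x,X_{-d}}$ is trivial --- is what makes the compatibility on the $Z$-part automatic for any $\zeta$.

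A second imprecision: the extension of $\zeta$ from $Z$ to $C_x(X_{-d})=ZU_x$ is not unique (``in exactly one way'' is false, since $U_x\cap J_{x,X_{-d}}$ is a proper subgroup of $U_x$ and any character of $U_x/(U_x\cap J_{x,X_{-d}})$ would also be compatible). The paper makes the canonical choice explicit: extend $\zeta$ to $ZU_x$ by $\zeta(zu)=\zeta(z)$, i.e.\ trivially across $U_x$. With that convention fixed, $\eta(X_{-d},\zeta)$ is well-defined, and since $G_x$-conjugation by $g$ carries this trivially-extended $\zeta$ on $ZU_x$ to the trivially-extended $\zeta$ on $Z\,{}^gU_x$, the invariance under change of representative follows from Proposition~\ref{P:Gxreps} as you say.
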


\begin{proof}
The chamber $\mathcal{C}_X$ associated to $(X_{-d},x)$ by Lemma~\ref{L:CGamma}    defines an Iwahori subgroup of $G_x$ with pro-p unipotent radical $U_x$; by construction, we have $C_{x}(X) = ZU_x$.  Since $\eta_X$ is trivial on $ZU_x\cap J_{x,X}$, the character $\zeta$ of $ZU_x$ defined by $\zeta(zu)=\zeta(z)$ for all $z\in Z$ and $u\in U_x$ extends
$\eta_X$.  Thus Proposition~\ref{P:Gxreps} applies.
\end{proof}

It follows from \eqref{E:GGxorbit} that the parity of $d_x(Y)$, for any $Y\in \cO$, is an invariant of the $G$-orbit, and we call this the \emph{parity depth of $\cO$ at $x$}.
  Therefore the depths $d$ of the components of $\tau_x(\cO,\zeta)$ all have parity equal to the parity depth of $\cO$ at $x$.  
  
  Note that the restriction of $\tau_x(\cO,\zeta)$ to any subgroup of $G_{x,0+}$ is independent of the choice of $\zeta$, so we may drop $\zeta$ from the notation in such cases.  
As needed, we associate to the zero nilpotent orbit the trivial representation of $G_x$, and denote it $\tau_x(\{0\})$.

\section{The case of positive-depth representations of \texorpdfstring{$\SL(2,\ratk)$}{SL2F}}\label{S:posdepth}

We briefly recap the classification of irreducible representations $\pi = \pi(\chi,\Gamma)$ of $\SL(2,\ratk)$ of positive depth, then establish that their  explicit branching to a maximal compact open subgroup $G_x$ can be described as twists of the datum $(\chi,\Gamma)$ defining $\pi$.  This allows us to state and prove our main theorem in this case, and to explicitly compute the constant terms that arise.

\subsection{Representation of \texorpdfstring{$\SL(2,\ratk)$}{SL(2,F)} of positive depth}

The classification of the irreducible admissible representations of $G=\SL(2,\ratk)$ was given in Shalika's 1966 thesis \cite{Shalika1966}, and Sally and Shalika obtained many explicit results on their characters in several papers, including \cite{SallyShalika1984}.  An excellent overview is given \cite{AdlerDeBackerSallySpice2011}.  In this section we address the positive-depth supercuspidal representations, using the parametrization of Adler and Yu \cite{Adler1998, Yu2001,FintzenFixYu2021}.  Because the tori in $\SL(2,\ratk)$ are one-dimensional, the correcting twist to this construction given by Fintzen, Kaletha and Spice in  \cite[Definition 3.1]{FintzenKalethaSpice2021} is trivial in this case.
%

\begin{proposition}\label{P:posdepthpara}
The isomorphism classes of irreducible representations of $\SL(2,\ratk)$ of positive depth $r$ are parametrized by the $G$-conjugacy classes of pairs $(T,\chi)$, where $T$ is a maximal torus of $G$ and $\chi$ is a character of $T$ of depth $r$.  
\end{proposition}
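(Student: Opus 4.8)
The plan is to separate the positive-depth irreducibles into the (automatically irreducible) parabolically induced representations and the supercuspidals, to match these respectively with the pairs $(T,\chi)$ in which $T$ is the split torus $S$ or an anisotropic maximal torus, and then to glue the two parametrizations.

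First I would dispose of the non-supercuspidal case. If $\pi$ is irreducible of depth $r>0$ and not supercuspidal, then — since the only proper parabolic of $\bG=\SL(2)$ is the Borel $B=SU$ — $\pi$ is a subquotient of $\Ind_B^G\chi$ for some character $\chi$ of $S$; by compatibility of the Moy--Prasad filtration with parabolic induction, $\chi$ must have positive depth (else $\pi$ would have depth $0$). The reducibility locus of $\chi\mapsto\Ind_B^G\chi$ consists of finitely many characters (the quadratic ones and the powers of $|\cdot|$), each trivial on the pro-$p$ group $S_{0+}=1+\PP$ since $p\neq 2$, hence each of depth $0$; therefore $\Ind_B^G\chi$ is irreducible, $\pi\cong\Ind_B^G\chi$, and $\mathrm{depth}(\chi)=r$. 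Finally $N_G(S)/S$ is the two-element Weyl group acting on $S$ by inversion, and the standard intertwining-operator computation gives $\Ind_B^G\chi\cong\Ind_B^G\chi'$ if and only if $\chi'\in\{\chi,\chi^{-1}\}$, that is, if and only if $(S,\chi)$ and $(S,\chi')$ are $G$-conjugate. This settles the split-torus part of the bijection.

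For the supercuspidal case I would invoke the exhaustion theorem of Fintzen — the hypotheses on $p$ of Section~\ref{SS:prestrict} make every irreducible tame, and alternatively Shalika's thesis provides the classification directly — so that every positive-depth supercuspidal $\pi$ of $G$ is of Adler--Yu type. The next step is to show that for the rank-one group $\bG=\SL(2)$ the Adler--Yu datum collapses to a pair $(T,\chi)$: a proper twisted Levi subgroup of $\SL(2)$ is a maximal torus, so the twisted Levi sequence is $T=G^0\subsetneq G^1=G$; supercuspidality forces $G^0$ to be compact modulo the centre, i.e. $T$ anisotropic (so $T$ is compact and $\buil(T)$ is a point). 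Because $\dim T=1$, the depth-zero piece of the datum (a character of $T$ trivial on $T_{0+}$, automatically ``cuspidal'') and the $G^0$-generic character $\phi$ of depth $r$ assemble into a single character $\chi$ of $T$ of depth $r$; the genericity of $\phi$ reduces to $\phi\neq\phi^w$ with $w$ acting by inversion, which holds automatically since $\phi^2\neq\triv$ at positive depth with $p\neq 2$; and the Fintzen--Kaletha--Spice correcting twist is trivial here, as already noted, precisely because $\dim T=1$.

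It then remains to assemble the bijection: $\mathrm{depth}(\pi(T,\chi))=\mathrm{depth}(\chi)$, and by the equivalence/rigidity theorems for Adler--Yu data (Hakim--Murnaghan, Kaletha, Fintzen--Kaletha--Spice) specialized to $\SL(2)$, $\pi(T,\chi)\cong\pi(T',\chi')$ if and only if $(T,\chi)$ and $(T',\chi')$ are $G$-conjugate. Supercuspidals admit no parabolically induced model, so the supercuspidal family (anisotropic $T$) is disjoint from the principal-series family (split $S$), and every maximal torus of $\SL(2)$ is either split or anisotropic; hence the two parametrizations glue to the asserted bijection. I expect the main obstacle to be exactly this collapse of the general Adler--Yu datum to a single pair $(T,\chi)$ — verifying carefully that the twisted Levi sequence, the depth-zero datum, the genericity condition, and the FKS twist all degenerate as claimed, and that the exhaustion and equivalence theorems apply in the stated form. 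The supporting observation that every quadratic character of $\ratk^\times$ (and every power of $|\cdot|$) has depth $0$, which is what makes every positive-depth principal series irreducible, is elementary but is what makes the split-torus case so clean.
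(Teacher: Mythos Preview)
Your argument is correct and more detailed than the paper's own, which takes a different route. The paper simply asserts that ``the classification is known'' (deferring to Shalika's thesis and to the Adler--Yu framework already invoked) and spends the proof on the \emph{construction}: for split $T$ it writes $\pi(T,\chi)=\Ind_B^G\chi$, and for anisotropic $T$ it extends $\chi$ across $TG_{x_T,r/2+}$ via the Moy--Prasad isomorphism (with a Weil--Heisenberg lift when $G_{x_T,r/2}\neq G_{x_T,r/2+}$) and sets $\pi(T,\chi)=\cind_{TG_{x_T,r/2}}^G\kappa$. That explicit inducing datum is what the subsequent branching-rule computations (Theorem~\ref{T:posdepth}) actually use.

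By contrast, you actually argue the bijection: you prove irreducibility and the Weyl-group equivalence for positive-depth principal series, and for supercuspidals you invoke Fintzen's exhaustion and the Hakim--Murnaghan/Kaletha equivalence after collapsing the Adler--Yu datum to $(T,\chi)$. This buys a self-contained proof of the parametrization, at the cost of heavier external input and of omitting the explicit model of $\pi(T,\chi)$. If you intend your write-up to feed into the later sections, you should append the paper's construction of $\kappa$ and of the inducing subgroup $TG_{x_T,r/2}$, since the proofs of Theorems~\ref{T:posdepth} and \ref{T:posdepth2} unpack exactly that datum.
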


This is equivalent to the well-known classification of the irreducible positive depth representations of $\SL(2,\ratk)$ in terms of unrefined minimal $K$-types.  To construct the representations explicitly  we first recall some facts about the maximal tori and their characters.

Let  $T$ be a maximal torus of $G$ 
and let $\chi$ be a character of $T$ of depth $r>0$.  The building $\buil(T)$ of $T$ embeds into $\buil(G)$ as the apartment $\apart(G,T)$ if $T$ is split and as a single point $\{x_T\}$ otherwise, which is a vertex if $T$ is unramified and the midpoint of a chamber if $T$ is ramified.  It follows that the depth $r$ is an integer if $T$ splits over an unramified extension and an element of $\frac12 + \Z$ otherwise.

To each pair $(T,\chi)$ we associate an element $\Gamma$ as follows.  If $\LieT$ denotes the Lie algebra of $T$, then via the Moy-Prasad isomorphism $e\colon \LieT_{r/2+}/\LieT_{r+} \to T_{r/2+}/T_{r+}$ there exists a nonzero element $\Gamma =\Gamma_\pi \in \LieT^*_{-r}$, uniquely defined modulo $\LieT^*_{-r/2}$, such that
$$
\chi(t)  = \psi(\Gamma(e^{-1}(t))).
$$
We identify $\Gamma$ with an element of $\g^*$ that is zero on the $T$-invariant complement of $\LieT$ in $\g$.  Then $\Gamma\in \g^*_{x,-r}$ for any $x\in \buil(T)$ and we recover $T$ as $\Cent_G(\Gamma)$.  Moreover, $\Gamma$ thus defines a character of $G_{x,r}/G_{x,r+}\cong \g_{x,r}/\g_{x,r+}$, and following the work of Moy and Prasad, the pair $(G_{x,r},\Gamma)$ is called an \emph{unrefined minimal $K$-type}.

\begin{proof}[Proof of Proposition~\ref{P:posdepthpara}]
The classification is known; we only wish to construct the representations $\pi = \pi(T,\chi)$ variously as follows.

 If $T$ is a split torus, then choose a Borel subgroup $B=TN$ of $G$ containing $T$ and extend $\chi$ trivially across $N$ to a character of $B$.  Set
$$
\Ind_{TN}^G(\chi) = \{ f\colon G\to \C \mid f(tng)=\chi(t)\nu(t)f(g) \forall t\in T, n\in N, g\in G\}
$$
where  $\nu$ is the square root of the modular character and is given on  $T\cong F^\times$ by  the $p$-adic norm.
Then $\pi(T,\chi)=\Ind_B^G(\chi)$ is an irreducible principal series representation.

 If $T$ is anisotropic, with associated point $x_T\in\buil(G)$, then we first extend $\chi$ to a character of $TG_{x_T,r/2+}$, by setting
 $$
 \chi(tg) = \chi(t) \psi(\Gamma(e^{-1}(g)))
 $$
 where $e \colon \g_{x_T,r/2+}/\g_{x_T,r+} \to G_{x,r/2+}/G_{x,r+}$ is the Moy-Prasad isomorphism.  When $G_{x_T,r/2}\neq G_{x_T,r/2+}$ (which will happen only if $T$ is unramified and $r\in 2\Z$), we take a certain Weil-Heisenberg lift of $\chi|_{G_{x_T,r}}$ to form a  $q$-dimensional representation $\omega$ of $T\ltimes G_{x_T,r/2}$, and set $\kappa(tg) = \chi(t)\omega(t,g)$.  Then $\pi(T,\chi) = \cind_{TG_{x_T,r/2}}^G \kappa$ is an irreducible supercuspidal representation.
\end{proof}

Given $\pi=\pi(T,\chi)$, we let $\Gamma = \Gamma_\pi$ denote a choice of element in $\g^*$ realizing the character $\chi$, as preceded the proof.  Then since $T=\Cent_G(\Gamma)$ we may also say that $(\chi,\Gamma)$ is the data defining $\pi$.  

\subsection{Branching rules obtained as twists of the inducing datum}
We begin by proving that the branching rules obtained in \cite[Theorem 7.4]{Nevins2005} and \cite[Theorem 6.2]{Nevins2013} are in fact constructable from twists of the data $(\chi,\Gamma)$.

\begin{theorem}\label{T:posdepth}
Let $\pi=\pi(T,\chi)$ be an irreducible representation of $G$ of depth $r>0$.  Let $\Gamma=\Gamma_\pi\in \g^*$ realize $\chi$ as above. 
Then for any vertex $x\in\buil(G)$ we have
\begin{equation}\label{E:basicdecomp}
\Res_{G_{x}}\pi = \pi^{G_{x,r+}} \oplus \bigoplus_{g\in \allbutone} \Sh_x({}^g\Gamma, {}^g\chi)
\end{equation}
where $\allbutone$ denotes a parameter set for the $G_x$-orbits in $G\cdot \Gamma$  that do not meet $\g^*_{x,-r}$, that is, such that the  coset ${}^g\Gamma + \g^*_{x,d_x({}^g\Gamma)+}$ is degenerate.
\end{theorem}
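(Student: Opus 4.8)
The plan is to build on the fully explicit branching rules already established in \cite[Theorem 7.4]{Nevins2005} and \cite[Theorem 6.2]{Nevins2013}, which between them cover both $G$-conjugacy classes of vertices $x$ and all three isomorphism types of the maximal torus $T=\Cent_G(\Gamma)$ (split, unramified anisotropic, ramified). Each of those results presents $\Res_{G_x}\pi$ as a finite-dimensional subrepresentation together with an infinite direct sum of representations induced from characters of certain thin open subgroups. The first task is to identify the part fixed by $G_{x,r+}$: since these are genuine (not merely virtual) decompositions into irreducibles, $\pi^{G_{x,r+}}$ is exactly the sum of the constituents of depth at most $r$, so it suffices to check that every remaining summand has depth strictly greater than $r$ --- consistent with the fact, from Proposition~\ref{P:Gxreps}(a), that the representation $\Sh_x({}^g\Gamma,{}^g\chi)$ attached to a degenerate coset has depth $d=-d_x({}^g\Gamma)>r$.

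The heart of the argument is to recognize each of the remaining induced summands as one of the representations $\Sh_x({}^g\Gamma,{}^g\chi)$. First I would recall, via the Cartan decomposition of $G$ relative to a maximal split torus (equivalently, the geometry of the tree), that the coadjoint orbit $G\cdot\Gamma\cong G/\Cent_G(\Gamma)$ is partitioned into countably many $G_x$-orbits indexed by $G_x\backslash G/\Cent_G(\Gamma)$, and that those for which $-d=d_x({}^g\Gamma)<-r$, i.e.\ for which ${}^g\Gamma+\g^*_{x,-d+}$ contains a nilpotent element, are precisely those meeting no translate of $\g^*_{x,-r}$, so they form the set $\allbutone$. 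For each such $g$, Lemma~\ref{L:CGamma} and Definition~\ref{D:defJ} furnish the adjacent chamber $\mathcal{C}_{{}^g\Gamma}$, the subgroup $J_{x,{}^g\Gamma}$ (equal to $G_{x,d/2}$ or $G_{z,d/2}$ according to the parity of $d$, with $z$ the barycentre of $\mathcal{C}_{{}^g\Gamma}$), the character $\eta_{{}^g\Gamma}$, and --- since ${}^g\chi$ is realized on Moy--Prasad quotients by ${}^g\Gamma$ and hence agrees with $\eta_{{}^g\Gamma}$ on the overlap of their domains --- the representation $\Sh_x({}^g\Gamma,{}^g\chi)=\Ind_{C_x({}^g\Gamma)J_{x,{}^g\Gamma}}^{G_x}\eta({}^g\Gamma,{}^g\chi)$. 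It then remains to compare, case by case, the summands of \cite[Theorem 7.4]{Nevins2005} and \cite[Theorem 6.2]{Nevins2013} with these: a short Mackey-theoretic reading shows the inducing subgroups coincide with the groups $C_x({}^g\Gamma)J_{x,{}^g\Gamma}$ (in particular that the passage to a chamber barycentre in the even-depth case is already present there), the inducing characters are the asserted twists of $\chi$, and the degrees $\tfrac12 q^{d-1}(q^2-1)$ and depths $d$ match Proposition~\ref{P:Gxreps}(a); inequivalence of distinct summands, hence that the indexing by $\allbutone$ is a bijection with no repetitions, follows from Proposition~\ref{P:Gxreps}(b).

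The main obstacle I anticipate is the bookkeeping needed to stitch together \cite{Nevins2005} and \cite{Nevins2013}, which compute the restriction at the two vertex classes using different auxiliary elements --- the affine reflection $\omega=\smat{0&1\\\varpi&0}$ in the former and the translation $\eta=\smat{1&0\\0&\varpi}$ in the latter. As Example~\ref{E:omegaeta} makes clear, these act differently on $\g^*$, and in particular on the orbit $G\cdot\Gamma$, so transporting a summand from one vertex to the other must be done through the $\GL(2,\ratk)$-conjugation formula \eqref{Eq:GL2conj} of Proposition~\ref{P:Gxreps}(c), with care taken to record the twist in ${}^\nu\Gamma$ correctly; getting the parametrization of $\allbutone$ uniformly consistent across the split, unramified, and ramified cases is where most of the effort will lie. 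A secondary technical point is the case of $T$ unramified with $r$ even, where the construction of $\pi$ involves a Weil--Heisenberg lift; one verifies that this affects only the structure of $\pi^{G_{x,r+}}$ (and of the non-degenerate minimal $K$-type coset), leaving the degenerate-coset summands on the right-hand side of \eqref{E:basicdecomp} untouched.
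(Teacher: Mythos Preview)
Your proposal is correct and follows essentially the same approach as the paper: both proofs rest on reading the explicit branching rules of \cite[Theorem 7.4]{Nevins2005} (split $T$) and \cite[Theorem 6.2]{Nevins2013} (anisotropic $T$) and recognising each positive-depth summand as a Shalika representation $\Sh_x({}^g\Gamma,{}^g\chi)$, with the passage to an arbitrary vertex handled by $\GL(2,\ratk)$-conjugation via Proposition~\ref{P:Gxreps}(c). The paper is slightly more concrete in the split case, writing down explicit conjugating elements $g_0,g_1$ that realise the two depth-$d$ summands as twists of $\Gamma$ and then observing that these exhaust the nontrivial cosets of $G_x\backslash G/T$; your anticipated difficulty reconciling the $\omega$- and $\eta$-conventions is less serious than you suggest, since the two cited papers treat disjoint cases (split versus anisotropic $T$) rather than the same case at different vertices.
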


\begin{proof}
We begin with the case that $T=\Cent(\Gamma)$ is anisotropic. Set $y=x_T$ and let  $\pi(T,\chi) =\cind_{TG_{y,r/2}}^G \kappa$ be the corresponding  supercuspidal representation.  

First suppose that we are in the special case that $x=x_0$ and $y \in \overline{\mathcal{C}}$, the  closure of the fundamental alcove, which was the case considered in \cite{Nevins2013}.  By \cite[Prop 4.4]{Nevins2013},
the double coset space $G_x\backslash G/TG_{y,r/2}$ that arises in the Mackey decomposition
$$
\Res_{G_x}\pi = \bigoplus_{g \in G_x\backslash G/TG_{y,r/2}}\Ind_{G_x\cap {}^g(TG_{y,r/2})}^{G_x} {}^g\kappa
$$
is independent of $r$ and is given by $G_x\backslash G/T$.  Since $T=\Cent_G(\Gamma)$, this latter space parametrizes the $G_x$-orbits in the $G$-orbit of $\Gamma$ in $\g^*$.  By \cite[Thm 6.1]{Nevins2013}, each of these Mackey components is irreducible. 

Now since $\Gamma$ has depth $-r$ and depth is $G$-invariant, ${}^g\Gamma$ meets $\g^*_{x,-r}$ if and only if $d_x({}^g\Gamma)=-r$.  Since $\Cent_G(\Gamma)=T$, this happens if and only if $gx = x_T= y$ in which case the corresponding Mackey component has depth $r$ and so lies in $\pi^{G_{x,r+}}$.  Note that $\pi^{G_{x,r+}} \neq \{0\}$ thus arises only when $T$ is an unramified torus attached a vertex $y$ in  the same $G$-conjugacy class as $x$.

When $gx\neq y$, then by in \cite[Thm 6.2]{Nevins2013},\footnote{Errata to \cite[Thm 6.2]{Nevins2013}: the decomposition in case $y=1$ is missing the term corresponding to the double coset representative $\mathsf{e}^{\eta}$.}  the corresponding Mackey component satisfies 
$$
\Ind_{G_{x}\cap {}^g(TG_{y,r/2})}^{G_x} {}^g\kappa \cong \Sh_{x}({}^g\Gamma, {}^g\chi),
$$
as required, yielding \eqref{E:basicdecomp} for the fundamental case.

Now suppose that $x\in \buil(G)$ is an arbitrary vertex.  Then there exists $k\in \GL(2,\ratk)$ such that $k x = x_0$.  Choose $h\in \SL(2,\ratk)$ such that  $hy\in k^{-1} \overline{\mathcal{C}}$.  Then via the identification ${}^kG_{x}= G_{x_0}$, and then the isomorphism ${}^h\pi\cong \pi$, we may write
$$
\Res_{G_{x_0}}{}^{kh}\pi = \Res_{G_x}{}^h\pi \cong \Res_{G_x}\pi.
$$
Even when $kh\notin \SL(2,\ratk)$, the data defining the representation ${}^{kh}\pi$ is simply $({}^{kh} T,{}^{kh}\chi,{}^{kh}\Gamma, khy)$. We may therefore apply the decomposition \eqref{E:basicdecomp} to $\Res_{G_{x_0}}{}^{kh}\pi$.  
Since ${}^kG_{x,r+}=G_{x_0,r+}$, we have $({}^{kh}\pi)^{G_{x_0,r+}}=({}^h\pi)^{G_{x,r+}}$.
Moreover, for any $g$ defining a $G_{x_0}$-orbit of ${}^{kh}\Gamma$ that does not meet $\g_{x_0,-r}^*$, we have
$$
S_{x_0}({}^{gkh}\Gamma,{}^{gkh}\chi)
={}^k(S_{x_0}({}^{(k^{-1}gk) h}\Gamma,{}^{(k^{-1}gk)h}\chi))=
S_x({}^{g'h}\Gamma,{}^{g'h}\chi),
$$
where $g' = k^{-1}gk$; then $g'h$ defines a $G_x$-orbit of $\Gamma$ that does not meet $\g_{x,-r}^*$, showing the index sets correspond.  

We now consider the case that $T$ is a split torus, so that $\pi=\pi(T,\chi)=\Ind_B^G\chi$ for some Borel subgroup $B=TU$ containing $T$ having $U$ as its unipotent radical. Since $G=G_xB$, there is a unique (highly reducible) Mackey component in this case.  Instead, in the special case that $x\in \{x_0,x_1\}$ and $T=S$, the decomposition of $\Res_{G_x}\pi$ into irreducibles is found in \cite{Nevins2005} by explicitly decomposing the $G_x$-subrepresentations $\pi^{G_{x,n}}$ as $n\to \infty$.  We need to show that this decomposition is in fact of the form \eqref{E:basicdecomp}.  We begin with $x=x_0$ and $T=S$.

First note that as $\pi$ has depth $r$ at $x$, the subrepresentation $\pi^{G_{x,r+}}$ is nonzero, and in fact is irreducible by \cite[Prop 4.4]{Nevins2005} (where this space is denoted $V^{K_m}$, for $m=r+1$ the conductor of $\chi$).  By depth we know that ${}^g\Gamma \in \g_{x,-r}^*$ if and only if ${}^gT \subset G_x$, so the $G_x$-orbits in $G\cdot \Gamma$ meeting $\g_{x,-r}^*$ correspond to the trivial double coset of $G_{x}\backslash G/T$. 

The irreducible representations of $G_{x}$ of depth greater than $r$  appearing in $\Res_{G_x}\pi$ are classified in \cite[Thm 7.4]{Nevins2005}. The notation in that paper relates to ours as follows.  Identify $\g$ and $\g^*$ via the trace form.  The conductor $n$ is $d+1$, and for any $u\in \RR^\times, v\in \PP$ we have
\begin{equation}\label{NotationNevins2005}
\mathcal{D}_n(\rho, \dot{X}(u,v)) := \Sh_{x}(\varpi^{-d}\dot{X}(u,v), \rho).
\end{equation}
If $\dot{\Gamma}$ is  the diagonal matrix $\diag(a,-a)\in \g_{x,-r}$,  set $\gamma_0 = a\varpi^{d}$, $\gamma_1 = a\ep^{-1}\varpi^{d}$ and write
$$
g_i= \mat{1&  -\frac12 \gamma_i^{-1}\\ \gamma_i & \frac12}.
$$
Following the notation in \emph{loc. cit.}, define $Y_0 = \dot{X}(1,\gamma_0^2)$ and $Y_1 = \dot{X}(\ep,\ep\gamma_1^2)$;  then
\begin{equation}\label{E:Y0Y1}
 \varpi^{-d}Y_0
 ={}^{g_0}\Gamma \quad \text{and}\quad 
 \varpi^{-d}Y_1 
 ={}^{g_1}\Gamma.
\end{equation}
 It follows that 
 $\rho_i:={}^{g_i}\chi$ is a character of $C_{{x}}(Y_i)=C_{{x}}({}^{g_i}\Gamma)$ extending $\eta_{{}^{g_i}\Gamma}$.
 
Then, in our notation, \cite[Thm 7.4]{Nevins2005} asserts that for each integer $d>r$, $\Res_{G_x}\pi$ has two irreducible components of depth $d$, denoted $W_{d-1}^\pm$, and these are explicitly given by 
\begin{equation}\label{E:psdecomp}
W_{d-1}^+\oplus W_{d-1}^-\cong \Sh_{x}({}^{g_0}\Gamma, {}^{g_0}\chi) \oplus \Sh_{x}({}^{g_1}\Gamma, {}^{g_1}\chi).
\end{equation}
Noting the factorization 
$$
\mat{1 & -\frac12\gamma^{-1}\\ \gamma & \frac12} = \mat{1&0\\\gamma&1}\mat{1 & -\frac12 \gamma^{-1}\\0&1},
$$
we see that as $\gamma$ runs over the distinct square classes in $\gamma\in\PP^{d}\setminus \PP^{d+1}$, for all $d>0$, we   obtain representatives of the distinct nontrivial cosets of $G_{x}\backslash G/T$, which is the index set $\allbutone$.  


The same result is obtained for $x=x_1$ in \cite[Cor 4.6, Thm 7.4]{Nevins2005} via conjugation by $\omega = \smat{0 & 1 \\ \varpi & 0} \in \GL(2,\ratk)$, as $\omega x_0=x_1$.   For arbitrary $x$, we proceed as in the previous case, this time choosing $k, h\in \SL(2,\ratk)$ for which $kx \in \{x_0,x_1\}$ and for which ${}^hT = S$.
\end{proof}

\subsection{Main theorem for representations of positive depth}
Before stating the next theorem, we require a short lemma about filtrations of tori.

\begin{lemma}\label{L:toriintersection}
Let $T$ be a maximal torus of $G=\SL(2,\ratk)$, let $x\in \buil(G)$.  Let $\dot{X}\in \g$ be nilpotent and denote its centralizer in $G_x$ by $U_x$.  Then for any $\ell\in \Z_+$, we have
$$
T \cap U_xG_{x,\ell} \subseteq ZT_{\ell}.
$$
\end{lemma}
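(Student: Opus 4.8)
The plan is to reduce the statement, by conjugation, to an explicit matrix computation at a single vertex, and to read off the required bound from the trace (and determinant) of one $2\times 2$ identity.

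First I would unwind $U_x$. Since $\dot X$ is nilpotent, $\Cent_G(\dot X)=ZU$ for the unipotent radical $U$ of a Borel subgroup $B=SU$ with $\dot X\in\LieU$ (as recalled just before \eqref{E:GGxorbit}); rescaling $\dot X$ changes neither $U$ nor $\Cent_G(\dot X)$, so we may take $U=\{I+s\dot X:s\in\ratk\}$, whence $U_x=Z\,(U\cap G_x)$ with $U\cap G_x=\{I+s\dot X:s\dot X\in\g_{x,0}\}$. If $t\in T\cap U_x G_{x,\ell}$, write $t=\zeta g h$ with $\zeta\in Z\subseteq T$, $g\in U\cap G_x$, $h\in G_{x,\ell}$; then $\zeta^{-1}t\in T\cap (U\cap G_x)G_{x,\ell}$, so it is enough to show $T\cap (U\cap G_x)G_{x,\ell}\subseteq ZT_\ell$. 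I would then note that the assertion is invariant under $\GL(2,\ratk)$-conjugation, which carries $(T,\dot X,x)$ to $({}^\nu T,{}^\nu\dot X,\nu\cdot x)$ and $G_{x,\ell}$, $ZT_\ell$ accordingly, and that $\GL(2,\ratk)$ acts transitively on the vertices of $\buil(G)$ while $\SL(2,\ratk)$ acts transitively on the edges; since $G_{x,\ell}$ does not depend on $x$ along the interior of an edge, this leaves exactly two cases, $x=x_0$ and $x=z_0$.

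Second, take $x=x_0$, so $G_{x_0,\ell}=\{g\in\SL(2,\RR):g\equiv I\bmod\PP^\ell\}$, and normalise $\dot X$ to lie in $M_2(\RR)$ and be nonzero modulo $\PP$, so that $U\cap G_{x_0}=\{I+s\dot X:s\in\RR\}$. Given $t=(I+s\dot X)h$ with $s\in\RR$ and $h\in G_{x_0,\ell}$, put $M:=h^{-1}-I$, which lies in $\PP^\ell M_2(\RR)$ because $\ell\geq 1$, and obtain the matrix identity
\[
s\dot X \;=\; t h^{-1}-I \;=\; (t-I)+tM .
\]
Taking the trace and determinant of this identity, using $\tr(\dot X)=\det(\dot X)=0$, $\tr(tM),\det(M)\in\PP^\ell$, and the $\SL(2)$-relation $\det(t-I)=2-\tr(t)$, one gets $\tr(t)-2\in\PP^{\,\val(s)+\ell}$ whenever $\val(s)<\ell$. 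Writing $\tr(t)=\mu+\mu^{-1}$ for the eigenvalue $\mu$ of $t$ (in $\ratk$, or in the quadratic field that splits $T$), one has $\tr(t)-2=\mu^{-1}(\mu-1)^2$, so the above is a lower bound on $\val(\mu-1)$; a second use of the identity, comparing $\val(s)=\val(s\dot X)$ with the valuation of the matrix $t-I$ (which is controlled by $\val(\mu-1)$ up to the ramification index of $T$), then forces $\val(s)$ past the threshold relevant to the filtration step of $T$, and one concludes $t\in ZT_\ell$.

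The last point is simply to assemble: the reductions of the first paragraph turn the general case into the two normalised ones, and the $x=z_0$ case runs through the same computation with the explicit description of $G_{z_0,\ell}$. The step I expect to be the main obstacle is the uniformity of the second paragraph across the three isomorphism types of maximal torus in $\SL(2,\ratk)$. For split and for unramified anisotropic $T$ the trace estimate is sharp and yields $t\in G_{x_0,\ell}\subseteq ZT_\ell$ directly; but for a ramified $T$ the valuation of the matrix $t-I$ and the depth of $t$ in $T$ differ by the ramification index, so landing back in $ZT_\ell$ requires keeping track of the fact that the Moy--Prasad breaks of a ramified torus occur at half-integers. That bookkeeping, together with the linear algebra needed to move a general maximal torus into convenient position relative to $x_0$, is the delicate part of the argument.
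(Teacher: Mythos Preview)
Your approach and the paper's are the same in spirit: both aim to control $\tr(t)-2$ and then apply the characterisation $t'\in T_\ell\iff\tr(t')\in 2+\PP^{2\ell}$. The paper does this by writing $t=uzg$, using the Iwahori-type factorisation of $G_{x,\ell}$ to place $z^{-1}t\in(G_{x,\ell}\cap U_{-\alpha})\,S_\ell\,(G_x\cap U_{\alpha})$, and then asserting that any such product has the trace of an element of $S_\ell$; you instead expand the matrix identity $s\dot X=(t-I)+tM$ and extract $\tr(t)-2\in\PP^{\val(s)+\ell}$ via $\det(t-I)=2-\tr(t)$.

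Your hesitation about the ramified case is exactly right, and the obstruction is not mere bookkeeping. Take $x=x_0$, $\dot X=\smat{0&1\\0&0}$, $T=\bigl\{\smat{\alpha&\beta\\\varpi\beta&\alpha}:\alpha^2-\varpi\beta^2=1\bigr\}$, $\ell=1$, and $t=\smat{\alpha&1\\\varpi&\alpha}$ with $\alpha^2=1+\varpi$. With $u=\smat{1&1\\0&1}$ one checks $u^{-1}t=\smat{\alpha-\varpi&1-\alpha\\\varpi&\alpha}\in G_{x_0,1}$, so $t\in T\cap U_xG_{x_0,1}$; yet the eigenvalue $\mu=\alpha+\sqrt\varpi$ has $\val(\mu-1)=\tfrac12$, so $t\in T_{1/2}\setminus ZT_1$. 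The paper's step fails on the same example: a general element $\smat{d&db\\cd&cdb+d^{-1}}$ of $(G_{x,\ell}\cap U_{-\alpha})S_\ell(G_x\cap U_{\alpha})$ has trace $d+d^{-1}+cdb\in 2+\PP^\ell$, not $2+\PP^{2\ell}$. Thus neither your bootstrap nor the paper's trace claim can reach $ZT_\ell$ when $T$ is ramified; the trace by itself only yields $ZT_{\ell/2}$, and a direct check as above shows $ZT_{\ell-1/2}$ is sharp. (The lemma is not invoked elsewhere in the paper; the main theorem relies on Lemma~\ref{L:toriintersection2}, whose proof is independent.)
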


\begin{proof}
As in the proof of Lemma~\ref{L:CGamma}, we may choose an apartment $\apart(G,S)\subset\buil(G)$ containing $x$ with root system $\Phi = \{\pm \alpha\}$ such that $\dot{X} \in \g_\alpha$; then $U_x =  (G_x\cap U_{\alpha})Z$.  Given $t\in T\cap U_xG_{x,\ell}$, write $t=uzg$ with $z\in Z$, $u\in G_x\cap U_{\alpha}$ and $g\in G_{x,\ell}$.  Then $z^{-1}t \in (G_{x,\ell}\cap U_{-\alpha})S_\ell(G_x\cap U_{\alpha})$, so its trace is  that of an element of $S_\ell$.  Since for any torus $T$ of $G$ and any $\ell>0$, we have $t'\in T_\ell$ if and only if the trace of $t'$ lies in $2+\PP^{2\ell}$, we conclude that $t\in T_\ell$.
\end{proof}

\begin{lemma}\label{L:toriintersection2}
Let $T=\Cent_{G}(\Gamma)$, where $\Gamma$ is a semisimple element of depth $-r$.  Suppose that at $x\in\buil(G)$ we have $d_{x}(\Gamma)=-d<-r$.  
Then $T\cap G_x = ZT_{d-r}$, so that $T\cap G_{x,\ell} = T_{d-r+\ell}$ for any $\ell \geq 0$.
\end{lemma}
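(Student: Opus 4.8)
The plan is to move everything into the standard coordinate patch and then read both sides off an explicit equation for $T$.

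First I would normalise $x$. Since $\GL(2,\ratk)$ acts transitively on the vertices of $\buil(G)$ and both the Moy--Prasad filtrations and the depth function are equivariant for this action, while the Moy--Prasad filtration of a torus is intrinsic (hence conjugation-invariant), replacing $(T,\Gamma,x)$ by a $\GL(2,\ratk)$-conjugate leaves the asserted identity unchanged, so I may take $x=x_0$, the vertex of Section~\ref{SS:Shalika}. Because $-d=d_{x_0}(\Gamma)<-r=d(\Gamma)$ the coset $\Gamma+\g^*_{x_0,-d+}$ is degenerate, and running the argument in the proof of Lemma~\ref{L:CGamma} and then conjugating by $G_{x_0}$, I may assume $\dot\Gamma=\algy(u,v)$ with $\val(u)=-d$ and $v\ne 0$, $\val(v)>-d$. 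The depth of such an element is $\tfrac12\val(uv)=\tfrac12(\val(u)+\val(v))$, so the hypothesis $d(\Gamma)=-r$ forces $\val(v)=d-2r$, which is indeed $>-d$ precisely because $d>r$; hence $w:=vu^{-1}$ has $\val(w)=2(d-r)>0$. A direct computation identifies
\[
T=\Cent_G(\Gamma)=\Bigl\{\ \mat{a & b\\ bw & a}\ :\ a^2-b^2w=1\ \Bigr\},
\]
and the map $\mat{a&b\\bw&a}\mapsto a+b\sqrt w$ identifies $T$ with the norm-one subgroup of $\extk^\times$ for $\extk=\ratk[\sqrt w]$ (split, unramified, or ramified as the square class of $w$ dictates). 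Tracking the valuation through this isomorphism one checks that $ZT_{d-r}=\{\,\mat{a&b\\bw&a}\in T:\val(b)\ge 0\,\}$ (the element $-I$ acting as $(a,b)\mapsto(-a,-b)$), and that for every $\ell>0$,
\[
T_{d-r+\ell}=\Bigl\{\ \mat{a & b\\ bw & a}\in T\ :\ \val(b)\ge\ell\ \text{ and }\ a\equiv 1\!\!\!\pmod{\PP}\ \Bigr\},
\]
uniformly in the ramified and unramified cases.

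\textbf{The two inclusions.} With these descriptions both containments are immediate. If $\mat{a&b\\bw&a}\in T_{d-r+\ell}$ (with $\ell\ge 0$), then $\val(b)\ge\ell$, so $\val(bw)\ge\ell$; and $a^2-1=b^2w$ together with $a+1\in\RR^\times$ gives $\val(a-1)=\val(b^2w)\ge 2\ell+2(d-r)\ge\ell$. Hence the matrix lies in $G_{x_0,\ell}$, and since also $Z\subseteq G_{x_0}=\SL(2,\RR)$ we get $ZT_{d-r}\subseteq T\cap G_{x_0}$ and $T_{d-r+\ell}\subseteq T\cap G_{x_0,\ell}$. Conversely, let $t=\mat{a&b\\bw&a}\in T\cap G_{x_0,\ell}$. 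If $\ell>0$ then $t\equiv I\pmod{\PP^\ell}$ forces $\val(b)\ge\ell$ and $a\equiv 1\pmod{\PP}$, i.e.\ $t\in T_{d-r+\ell}$; if $\ell=0$ then $t\in\SL(2,\RR)$ forces $\val(b)\ge 0$ (and $a\in\RR^\times$ follows), i.e.\ $t\in ZT_{d-r}$. This proves $T\cap G_{x_0}=ZT_{d-r}$ and $T\cap G_{x_0,\ell}=T_{d-r+\ell}$ for $\ell>0$. (One can instead deduce the inclusion ``$\subseteq$'' from Lemma~\ref{L:toriintersection} after noting that $T\cap G_{x_0,\ell}\subseteq U_{x_0}\,G_{x_0,\,d-r+\ell}$, where $U_{x_0}=\Cent_{G_{x_0}}(\algy(u,0))$.)

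\textbf{Where the difficulty lies.} This is a bookkeeping argument, and essentially all the care goes into the setup: normalising the Moy--Prasad filtration of $T$ so that the shift $d-r$ --- an integer when $T$ splits over an unramified extension and a half-integer otherwise --- lines up with the actual filtration breaks of $T$ (at integers, resp.\ half-integers) under the parametrisation by the conic $a^2-b^2w=1$. The only other point worth a remark is that the reduction to $x=x_0$ uses $\GL(2,\ratk)$- rather than $\SL(2,\ratk)$-conjugacy, which is legitimate because the two $\SL(2,\ratk)$-orbits of vertices are interchanged by $\GL(2,\ratk)$ and every ingredient of the statement is $\GL(2,\ratk)$-equivariant.
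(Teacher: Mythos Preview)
Your argument is correct, but it is considerably more hands-on than the paper's. The paper never touches coordinates: it works on the Lie algebra, observing that because $\LieT$ is one-dimensional and spanned by (the dual of) $\Gamma$, every nonzero $\dot X\in\LieT$ is a scalar multiple of $\dot\Gamma$, so $d_x(\dot X)-d(\dot X)=d_x(\Gamma)-d(\Gamma)=-(d-r)$ is a \emph{constant} shift. This immediately gives $\LieT\cap\g_{x,\ell}=\LieT_{d-r+\ell}$, and the group statement follows by ``passage to the group'' via the Moy--Prasad isomorphism (with $Z$ inserted by hand at depth zero). Your route---normalising to $x=x_0$, putting $\dot\Gamma=\algy(u,v)$, parametrising $T$ by the conic $a^2-b^2w=1$, and checking both inclusions entrywise---reaches the same conclusion but trades that one-line depth-shift observation for explicit bookkeeping in the split, unramified, and ramified cases. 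What your approach buys is that the ``passage to the group'' step, which the paper leaves implicit, becomes completely transparent; what the paper's approach buys is brevity and independence of the vertex hypothesis (your reduction via $\GL(2,\ratk)$-transitivity uses that $x$ is a vertex, which is fine for the application but slightly narrower than the lemma as stated).
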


\begin{proof}
Let $\LieT$ be the Lie algebra of $T$.  The hypotheses imply that $d_x(\varpi^k\Gamma)=d(\varpi^k\Gamma)-(d-r)$ for any $k\in \Z$.  Thus since $\LieT$ is one-dimensional, for any element $\dot{X}\in \LieT_\ell\setminus \LieT_{\ell+}$ 
we have $d_x(\dot{X}) = \ell-(d-r)$, yielding $\LieT\cap \g_{x,\ell} = \LieT_{d-r+\ell}$. Passage to the group yields the desired result, where at depth zero, we observe that $Z\subset T\cap G_x$ for all $x$. 
\end{proof}

\begin{theorem}\label{T:posdepth2}
Let $\pi = \pi(T,\chi)$ be an irreducible representation of $G=\SL(2,F)$ of depth $r>0$ and let $\Gamma = \Gamma_\pi \in \g^*$. 
Then for each maximal compact $G_x$, there is an integer $n_{x}(\pi)$ such that in the Grothendieck group of representations we have
\begin{equation}\label{E:posdepththm}
\Res_{G_{x,r+}}\pi \cong n_{x}(\pi)\triv + \sum_{\cO \in \nil(\Gamma)} \Res_{G_{x,r+}}\tau_{x}(\orb).
\end{equation}
That is, up to some copies of the trivial representation, the representation $\pi$ is locally completely determined by the nilpotent support of $\Gamma$.
\end{theorem}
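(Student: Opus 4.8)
The plan is to derive \eqref{E:posdepththm} by restricting the explicit branching rule of Theorem~\ref{T:posdepth} one step further, from $G_x$ to $G_{x,r+}$, and identifying each Shalika constituent with the nilpotent orbit cut out by its degenerate coset. I would begin with $\Res_{G_x}\pi = \pi^{G_{x,r+}}\oplus\bigoplus_{g\in\allbutone}\Sh_x({}^g\Gamma,{}^g\chi)$ from Theorem~\ref{T:posdepth}. Since $\Gamma$ has depth $-r$, each summand in the direct sum is (by Proposition~\ref{P:Gxreps}) an irreducible representation of depth $d_g:=-d_x({}^g\Gamma)>r$, and because $\bG$ has rank one the degenerate coset ${}^g\Gamma+\g^*_{x,-d_g+}$ meets exactly one nilpotent orbit, say $\cO_g$, which lies in $\nil_x(\Gamma)=\nil(\Gamma)$ by Proposition~\ref{P:asymptotic_orbit_invariance}(c). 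Choosing a nilpotent $X$ in that coset gives $d_x(X)=-d_g$, so by \eqref{E:GGxorbit} we may take $X=X_{-d_g}$ as the representative of the unique $G_x$-orbit of $\cO_g$ of that depth, which is the one entering $\tau_x(\cO_g,\zeta)$ for $\zeta$ the central character of $\pi$.

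The crux is the isomorphism $\Res_{G_{x,r+}}\Sh_x({}^g\Gamma,{}^g\chi)\cong\Res_{G_{x,r+}}\Sh_x(X_{-d_g},\zeta)$, which I would extract from Lemma~\ref{L:Shalikaequiv} applied with $\Gamma_1={}^g\Gamma$, $\Gamma_2=X_{-d_g}$, and $s=r$. A short computation in $\sltwo$, using that ${}^g\Gamma$ is semisimple of global depth $-r$, shows ${}^g\Gamma-X_{-d_g}$ has depth $d_g-2r$ at $x$; as $d_g>r$ this puts ${}^g\Gamma$ in $X_{-d_g}+\g^*_{x,-r}$, so Lemma~\ref{L:Shalikaequiv} is applicable. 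If $r<d_g\le 2r$ then $s=r\ge d_g/2$ and the lemma gives the isomorphism with no condition on the twisting characters. If $d_g>2r$ one is in the regime $s<d_g/2$, and the character hypothesis must be verified: the subgroups $J_{x,{}^g\Gamma}$ and $J_{x,X_{-d_g}}$ coincide by Lemma~\ref{L:CGamma} and $\eta_{{}^g\Gamma}=\eta_{X_{-d_g}}$ on them because the two elements agree modulo $\g^*_{x,-d_g/2}$; and on $C_x({}^g\Gamma)\cap G_{x,r+}={}^gT_{d_g+}$ — computed via Lemma~\ref{L:toriintersection2} — the character ${}^g\chi$ is trivial (as $\chi$ has depth $r<d_g$) and so is $\eta(X_{-d_g},\zeta)$: using the $Z\,U_xJ$-decomposition of $C_x(X_{-d_g})J$ furnished by Lemma~\ref{L:toriintersection}, such a torus element has trivial $Z$-component (it is pro-unipotent) and a $J$-component lying in $G_{x,d_g+}$, on which $\eta_{X_{-d_g}}$ vanishes. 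I expect this last verification — that deep elements of the torus ${}^gT$ are invisible to the nilpotent character — to be the main technical point; everything else is Moy--Prasad bookkeeping.

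It remains to match index sets. For $\SL(2,\ratk)$ the $G_x$-orbit of a semisimple element of $G\cdot\Gamma$ is determined by the pair (nilpotent orbit of its degenerate coset at $x$, depth at $x$): injectivity because two such elements in a common coset $X+\g^*_{x,-d+}$ share the characteristic polynomial of $\Gamma$ and hence, via \cite[Lemma 5.2.1]{DeBackerNilpotent2002}, differ by an element of $G_{x,0+}$; surjectivity onto the pairs $(\cO,d)$ with $\cO\in\nil(\Gamma)$ and $d>r$ of the parity depth of $\cO$ at $x$ from the explicit form \eqref{E:form} together with Lemma~\ref{L:NilGamma}. Hence $\bigoplus_{g\in\allbutone}\Res_{G_{x,r+}}\Sh_x({}^g\Gamma,{}^g\chi)$ accounts exactly for the depth-$>r$ constituents of $\sum_{\cO\in\nil(\Gamma)}\Res_{G_{x,r+}}\tau_x(\cO)$, while the remaining constituents of the latter — the $\Sh_x(X_{-d}^{\cO},\zeta)$ with $0<d\le r$ — are trivial on $G_{x,r+}$ and each restrict to $\tfrac12 q^{d-1}(q^2-1)$ copies of $\triv$. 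Since no $\Res_{G_{x,r+}}\Sh_x(\cdot,\cdot)$ of depth $>r$ has a trivial constituent (such a constituent would force depth $\le r$), collecting trivial representations yields \eqref{E:posdepththm} with $n_x(\pi)=\dim\pi^{G_{x,r+}}-\sum_{\cO\in\nil(\Gamma)}\sum_{0<d\le r}\tfrac12 q^{d-1}(q^2-1)$, the inner sum over $d$ congruent to the parity depth of $\cO$ at $x$; this integer may be negative, which is why \eqref{E:posdepththm} is stated in the Grothendieck group, and its closed form is recorded in Proposition~\ref{P:npi}.
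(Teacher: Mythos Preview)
Your overall strategy---restrict Theorem~\ref{T:posdepth} to $G_{x,r+}$, identify each $\Sh_x({}^g\Gamma,{}^g\chi)$ with a summand of some $\tau_x(\cO)$ via Lemma~\ref{L:Shalikaequiv}, and match index sets---is exactly the paper's, and your handling of the range $r<d_g\le 2r$, the index-set bijection, and the bookkeeping of trivial constituents are correct.

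The divergence, and the one genuine gap, is at $d_g>2r$. You remain with Lemma~\ref{L:Shalikaequiv} in its harder regime $s<d/2$ and must verify that $\eta(X_{-d_g},\zeta)$ is trivial on ${}^gT_{d_g+}$. Your claim that the $J$-component of $t\in{}^gT_{d_g+}$ in the $ZU_xJ$ decomposition lies in $G_{x,d_g+}$ happens to be true, but Lemma~\ref{L:toriintersection} does not give it: that lemma bounds the $T$-depth of $t$ from the $G_x$-depth of its $G_{x,\ell}$-component, not the other way around. You can repair this either by a direct valuation count using the determinant constraint $\det t=1$ (which forces the diagonal of $t$ into $1+\PP^{2d_g+2}$), or more simply by observing that $\dot X u^{-1}=\dot X$ for any $u\in U_x$, whence $\eta_X(u^{-1}t)=\psi(\tr(\dot X\,t))$, and a one-line estimate on the lower-left entry of $t$ shows this lies in $\psi(\PP)=1$.

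The paper sidesteps this computation entirely. For $d_g>2r$ it abandons Lemma~\ref{L:Shalikaequiv} and proves the \emph{stronger} statement $\Sh_x({}^g\Gamma,{}^g\chi)\cong\Sh_x(X,\zeta)$ as $G_x$-representations: since $d_g>2r$ forces $C_x({}^g\Gamma)=Z\,{}^gT_{d_g-r}\subseteq Z\,{}^gT_{r+}$ by Lemma~\ref{L:toriintersection2}, the character ${}^g\chi$ reduces to the central character $\zeta$ on \emph{all} of $C_x({}^g\Gamma)$ (not merely on its intersection with $G_{x,r+}$); together with $C_x({}^g\Gamma)\subseteq C_x(X)G_{x,d_g-r}\subseteq C_x(X)J$ and $\eta_{{}^g\Gamma}=\eta_X$ on $J$, this identifies the two inducing characters outright. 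This is cleaner and yields the extra information, recorded after the proof, that the depth-$d$ pieces of $\Res_{G_x}\pi$ and of $\sum_\cO\tau_x(\cO,\zeta)$ already agree as $G_x$-representations once $d>2r$.
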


\begin{proof}
The restriction to $G_{x,r+}$ will be trivial on any $G_x$-representations of depth less than or equal to $r$, so our first step is to match components of depth $d>r$ in $\Res_{G_x}\pi$ and in $\sum_{\orb \in \nil(\Gamma)}\tau_x(\orb)$.  
Note that the restriction to $G_{x,r+}$ is independent of the choice of central character $\zeta$ so it is omitted from the notation.

 Theorem~\ref{T:posdepth} gives the decomposition \eqref{E:basicdecomp} of the left side: the components of depth greater than $r$ are parametrized by the degenerate $G_x$-orbits of $\Gamma$ at $x$.
 From \eqref{E:deftau} we infer the decomposition of the right side:  the components are parametrized by nilpotent $G_x$-orbits in $\orb\setminus \g_{x,0}^*$ for each $\orb \in \nil(\Gamma)$.  

By Proposition~\ref{P:asymptotic_orbit_invariance}, 
each degenerate coset  $\xi = {}^g\Gamma+\g^*_{x,-d+}$, where $d=-d_x({}^g\Gamma)$,  is represented by a nilpotent element $X\in \cO({}^g\Gamma,x)$ such that moreover ${}^g\Gamma-X\in \g^*_{x,-r}$.  The $G_x$-orbit of $\xi$ determines the  $G_x$-orbit of $X$ and by definition $G\cdot X\in \nil(\Gamma)$.  
Thus for each $d>r$ there is a one-to-one correspondence between the $G_x$-orbits in $G\cdot \Gamma$ whose depth at $x$ is $-d$ and the $G_x$-orbits in $\nil(\Gamma)$ whose depth at $x$ is $-d$.  To complete the proof we need to show the corresponding representations are isomorphic upon restriction to $G_{x,r+}$.

Let $\zeta = \chi|_Z$ be the central character of $\pi$.
If $r<d\leq 2\lfloor r \rfloor + 1$ then applying Lemma~\ref{L:Shalikaequiv} to the pair $\Gamma_1 = {}^g\Gamma$ and $\Gamma_2=X$, with $s=r$ gives $\Res_{G_{x,r+}}\Sh_x({}^g\Gamma,{}^g\chi) \cong \Res_{G_{x,r+}}\Sh_x(X,\zeta)$ as required.

If $d>2r$,  then we have a stronger result.  Lemma~\ref{L:toriintersection2} implies that $\Cent_{G_x}({}^g\Gamma) = Z\;{}^gT_{d-r}\subseteq Z\;{}^gT_{r+}$, and thus ${}^g\chi$ is given on this subgroup by the central character $\zeta$. Since the chambers attached to ${}^g\Gamma$ and to $X$ by Lemma~\ref{L:CGamma} coincide, we have $J:=J_{x,{}^g\Gamma}=J_{x,X}$.
Since ${}^g\Gamma-X\in \g^*_{x,-r}\subset \g^*_{x,-d/2+}$, we have $\eta_{{}^g\Gamma}=\eta_X$ as characters of $J$.  
Moreover, since ${}^g\Gamma \in X+\g^*_{x,-r}$, we have $C_{G_x}({}^g\Gamma) \subseteq C_{G_x}(X)G_{x,d-r} \subseteq C_{G_x}(X)J$.  Therefore  $\eta({}^g\Gamma,{}^g\chi)=\eta(X,\zeta)$ as characters of this common group so that $\Sh_x({}^g\Gamma,{}^g\chi) = \Sh_x(X,\zeta)$ as representations of $G_x$.
\end{proof}

In the course of the proof we established that the components of depth $d>2r$ occurring in $\Res_{G_x}\pi$ coincide as representations of $G_x$, not just as representations of $G_{x,r+}$, with the components of depth $d>2r$ in $\sum_{\orb \in \nil(\Gamma)} \tau_{x}(\orb,\zeta)$, where $\zeta$ is the central character of $\pi$. This  was proven case by case in  \cite[Rem 7.5]{Nevins2005} and \cite[Prop 7.6]{Nevins2013}.

\begin{remark}
Comparing \eqref{E:posdepththm} with the known values of $\WF(\pi)$ from \cite[Tables 1--4]{DeBackerSally2000}  reveals that $\nil(\Gamma)=\WF(\pi)$ in all cases (\emph{cf} Theorem~\ref{T:nil=wf}).  Moreover, with the standard normalization chosen in \cite[I.8]{MoeglinWaldspurger1987}, the coefficients of the leading terms of the local character expansion agree with those of \eqref{E:posdepththm}; namely $c_\orb(\pi)=1$ for all $\orb\in \WF(\pi)$.     Thus Theorem~\ref{T:posdepth2} is a representation-theoretic analogue of the analytic local character expansion.

On the other hand, the constant term $n_{x}(\pi)$ of the decomposition \eqref{E:posdepththm} does not (and could not) agree with the constant term $c_{0}(\pi)$ of the local character expansion.  For one, $n_x(\pi)\in \Z$ whereas $c_0(\pi)$ may be half-integral; see Table~\ref{Table:c0}.  For another, $n_x(\pi)$ depends on the dimension of $\pi^{G_{x,r+}}$, which may vary based on the $G$-conjugacy class of the vertex $x\in \buil(G)$.  
\end{remark}

Let us compute the constant terms $n_x(\pi)$ explicitly.

\begin{proposition}\label{P:npi}
Let $\pi=\pi(T,\chi)$ be an irreducible representation of depth $r>0$ as in Theorem~\ref{T:posdepth}.  Then for each vertex $x\in \buil(G)$, the dimension of the subspace of $G_{x,r+}$-fixed vectors, as well as the value of the coefficient $n_{x}(\pi)$ appearing in \eqref{E:posdepththm} are as given in Table~\ref{T:c0x}.
\end{proposition}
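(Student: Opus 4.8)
The plan is to extract $n_x(\pi)$ from the identity \eqref{E:posdepththm}, which has already been established (Theorem~\ref{T:posdepth2}), by comparing the multiplicity of the trivial representation of $G_{x,r+}$ on the two sides. Since \eqref{E:posdepththm} holds in the Grothendieck group, equating trivial-isotypic multiplicities gives
\[
n_x(\pi) \;=\; \dim \pi^{G_{x,r+}} \;-\; \sum_{\orb\in\nil(\Gamma)} \dim \tau_x(\orb)^{G_{x,r+}},
\]
so everything reduces to computing the two nonnegative quantities subtracted on the right.

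For the sum, recall from \eqref{E:deftau} and \eqref{E:GGxorbit} that $\tau_x(\orb)$ has exactly one irreducible component $\Sh_x(X_{-d},\zeta)$ for each integer $d>0$ whose parity equals the parity depth $\varepsilon_x(\orb)$ of $\orb$ at $x$, and that component has degree $\tfrac12 q^{d-1}(q^2-1)$ by Proposition~\ref{P:Gxreps}. A component of depth $d\le r$ is trivial on $G_{x,r+}$, hence contributes its full degree; a component of depth $d>r$ contributes nothing, since by the proof of Theorem~\ref{T:posdepth2} the depth-$(>r)$ part of $\sum_{\orb}\tau_x(\orb)$ agrees, as a $G_{x,r+}$-representation, with the depth-$(>r)$ part of $\Res_{G_x}\pi$, which by \eqref{E:basicdecomp} is complementary to $\pi^{G_{x,r+}}$ and so has no $G_{x,r+}$-fixed vectors. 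Therefore $\dim\tau_x(\orb)^{G_{x,r+}}$ equals the sum of $\tfrac12 q^{d-1}(q^2-1)$ over $0<d\le r$ with $d\equiv\varepsilon_x(\orb)\pmod 2$, which is a finite geometric series in $q^2$ with an elementary closed form (and is empty, hence zero, when $r<1$, as for the depth-$\tfrac12$ ramified torus).

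It remains to supply two pieces of case-dependent data. First, $\dim\pi^{G_{x,r+}}$: by the proof of Theorem~\ref{T:posdepth}, $\pi^{G_{x,r+}}$ is the unique depth-$r$ constituent of $\Res_{G_x}\pi$, and it is nonzero exactly when $T$ is split or when $T$ is unramified with attached vertex $x_T$ in the $G$-conjugacy class of $x$; in each such case its dimension is read off from \cite[Prop.~4.4, Thm.~7.4]{Nevins2005} for the principal series and from the inducing construction in the proof of Proposition~\ref{P:posdepthpara} together with \cite[Thm.~6.2]{Nevins2013} for the supercuspidals, the one subtlety being the factor of $q$ contributed by the Weil--Heisenberg lift $\omega$ when $T$ is unramified and $r\in 2\Z$. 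Second, the parity depths $\varepsilon_x(\orb)$: Lemma~\ref{L:NilGamma} identifies $\nil(\Gamma)$ (all four principal orbits when $T$ is split, one orbit from each square-class family when $T$ is anisotropic), evaluating $d_{x_0}$ on the representatives $\dot X_u$ of \eqref{E:nilpotentorbits} gives $\varepsilon_{x_0}(\orb_u)=\val(u)\bmod 2$, and the $\GL(2,\ratk)$-element carrying $x_0$ to the other vertex $x_1$ interchanges the unit and uniformizer square-class families (Example~\ref{E:omegaeta}) and hence reverses every parity depth. Feeding these into the displayed formula and organizing by torus type (split, unramified, ramified), by the parity and integrality class of $r$, and by the two $\SL(2,\ratk)$-classes of vertices produces the entries of Table~\ref{T:c0x}. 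The reduction above is purely formal; the real work, and the main obstacle, is exactly this bookkeeping — assembling the dimensions $\dim\pi^{G_{x,r+}}$ from \cite{Nevins2005,Nevins2013} uniformly, keeping the Weil--Heisenberg twist and the half-integral depths straight, and verifying the closed forms of the geometric sums.
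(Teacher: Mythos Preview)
Your proposal is correct and follows essentially the same approach as the paper: derive $n_x(\pi)=\dim\pi^{G_{x,r+}}-\sum_{\orb\in\nil(\Gamma)}\dim\tau_x(\orb)^{G_{x,r+}}$ from \eqref{E:posdepththm}, compute each summand via the parity-depth geometric series and the explicit dimensions of the depth-$r$ Mackey component, and then assemble the cases using Lemma~\ref{L:NilGamma}. The only minor divergence is that your justification for why depth-$(>r)$ components of $\tau_x(\orb)$ have no $G_{x,r+}$-fixed vectors is unnecessarily roundabout---since $G_{x,r+}\trianglelefteq G_x$ and each $\Sh_x(X_{-d},\zeta)$ is irreducible of depth $d>r$, its $G_{x,r+}$-fixed subspace is a proper $G_x$-subrepresentation, hence zero.
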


\begin{table}[ht]
\begin{tabular}{|l|cccc|}
\hline Type of torus $T$ & split & unramified, $x_T\sim x$ &unramified, $x_T\not\sim x$ & ramified\\
depth $r$ & $r\in \Z_{>0}$ & $r\in \Z_{>0}$ & $r\in \Z_{>0}$ & $r\in \frac12 + \Z_{\geq 0}$\\ 
\hline $\dim(\pi(T,\chi)^{G_{x,r+}})$  &$(q+1)q^r$&$(q-1)q^r$&$0$&$0$\\
\hline $n_{x}(\pi)$ & $q+1$ & $q-q^r$ if $r$ is even & $1-q^r$ if $r$ is even & $\displaystyle (1-q^{r-1/2})(q+1)/2$\\
&&$1-q^r$ if $r$ is odd&$q-q^r$ if $r$ is odd&\\
\hline
\end{tabular}
\caption{The values of $n_{x}(\pi)$ appearing in \eqref{E:posdepththm} for each irreducible representation of $\SL(2,F)$ of depth $r>0$.}
\label{T:c0x}
\end{table}

\begin{proof}
Let $\pi=\pi(T,\chi)$ have depth $r>0$.  From Theorem~\ref{T:posdepth} we have the equality
\begin{equation}\label{E:valgxr}
n_{x}(\pi)=\dim(\pi^{G_{x,r+}}) - \sum_{\cO \in \nil(\Gamma)}\dim(\tau_x(\cO))^{G_{x,r+}}.
\end{equation}
Let us first compute $\dim(\pi^{G_{x,r+}})$ in each case.  If $\pi$ is a principal series representation and $B$ is a Borel subgroup containing $T$, then  $\pi^{G_{x,r+}} = \Ind_{(B\cap G_x)G_{x,r+}}^{G_x}\chi$, whence $\dim(\pi^{G_{x,r+}}) = \vert G_x/(B\cap G_x)G_{x,r+}\vert = (q+1)q^r$.  

If $\pi=\pi(T,\chi)=\cind_{TG_{x_T,r/2}}^G\kappa$ is an unramified supercuspidal representation such that some $G$-conjugate of $T$ is contained in $G_x$, then (replacing $T$ and $\pi$ by this conjugate) we have $x_T=x$ and  $\pi^{G_{x,r+}} = \Ind_{TG_{x,r/2}}^{G_x} \kappa(\chi)$.  It follows from a calculation in \cite[Prop 4.8]{Nevins2013} that independently of the parity of $r$ we have $\dim(\pi^{G_{x,r+}})=(q-1)q^r$.  On the other hand, if $T$ is not conjugate to a torus contained in $G_x$, then $\Res_{G_x}\pi(T,\chi)$ has no components of depth $r$ and $\dim(\pi^{G_{x,r+}})=0$.
Similarly, if $\pi =\pi(T,\chi)$ is a ramified supercuspidal representation, then its depth is half-integral, whence for a vertex $x$ we have $G_{x,r}=G_{x,r+}$, and thus by definition of depth $\pi^{G_{x,r+}}=\{0\}$.

On the other hand, the space of $G_{x,r+}$-fixed vectors of $\tau_x(\cO)$ is exactly the sum of its irreducible components of depth $d\leq r$.  These have total dimension $\frac12q^{d-1}(q^2-1)$ and  correspond to the $G_x$-orbits of $\cO$ whose depths $-d$ at $x$ satisfy $-r\leq -d\leq -1$.  Thus
if the parity depth of $\cO$ at $x$ is even
then
$$
\dim(\tau_x(\orb)^{G_{x,r+}}) = 
\sum_{e=1}^{\lfloor r/2 \rfloor} \frac12q^{2e-1}(q^2-1)
= \frac12q(q^{2\lfloor r/2\rfloor}-1).
$$
whereas if it is odd we have
$$
\dim(\tau_{x}(\orb)^{G_{x,r+}}) = 
\sum_{e=1}^{\lceil r/2 \rceil} \frac12q^{2e-2}(q^2-1) 
= \frac12(q^{2\lceil r/2 \rceil}-1).
$$
Note that $\dim(\tau_{x}(\orb)\oplus \tau_{x}(\orb'))^{G_{x,r+}}= \frac12(q+1)(q^{\lfloor r\rfloor} -1)$ when the $G_x$-orbits in $\cO$ and $\cO'$ have opposite parity depths at $x$.

Consequently, we can compute $n_{x}(\pi)$ using \eqref{E:valgxr} and the explicit determination of $\nil(\Gamma)$  in Lemma~\ref{L:NilGamma}, as follows.

If $\pi$ is a principal series representation, then $T$ is a torus and $\Gamma$ is split.  Thus all principal nilpotent orbits occur, yielding 
$$
n_{x}(\pi) = (q+1)q^r - 2\left(\frac12(q+1)(q^r-1)\right) = q+1.
$$
If $\pi$ is a supercuspidal representation corresponding to a ramified torus, then $\nil(\Gamma)$ consists of two nonzero orbits which will be of opposite parity at any vertex $x$, and $\lfloor r \rfloor = r-\frac12$.  Thus we simply have $n_{x}(\pi)=0-\frac12(q+1)(q^{r-1/2}-1)$.

On the other hand, if $T$ is unramified, then $\nil(\Gamma)$ consists of two nonzero orbits and at any vertex $x$, the parity of the depths of elements of these orbits is that of $d_{x}(\Gamma)$.  Thus if $x$ is $G$-conjugate to $x_T$, the orbits that occur have the same parity as $-r=d_{x_T}(\Gamma)$, so
$$
n_{x}(\pi) = \begin{cases}
(q-1)q^r - 2\left(\frac12q(q^{r}-1)\right) = q-q^r & \text{if $r$ is even;}\\
(q-1)q^r - 2\left(\frac12(q^{r+1}-1)\right) = 1-q^r & \text{if $r$ is odd.}
\end{cases}
$$
Finally, if $x$ is not $G$-conjugate to $x_T$, then $d_x(\Gamma)$ and $d_{x_T}(\Gamma)$ have opposite parity.  Thus we conclude
$$
n_{x}(\pi) = \begin{cases}
0 - 2\left(\frac12(q^{r}-1)\right) = 1-q^r & \text{if $r$ is even;}\\
0 - 2\left(\frac12q(q^{r-1}-1)\right) = q-q^r & \text{if $r$ is odd.}
\end{cases}
$$
\end{proof}

\section{The case of depth-zero representations of \texorpdfstring{$\SL(2,\ratk)$}{SL2F}}\label{S:depthzero}

To establish our result for depth-zero representations of $\SL(2,\ratk)$, we apply a result by Barbash and Moy relating the wave front set of $\pi$ to that of $\pi^{G_{x,0+}}$, viewed as a representation of $\SL(2,\resk)\cong G_{x,0}/G_{x,0+}$  (Proposition~\ref{P:BM}). We begin by recalling the representation theory of $\SL(2,\resk)$ and then the classification of depth-zero representations of $\SL(2,\ratk)$.

\subsection{Representations of \texorpdfstring{$\SL(2,\resk)$}{SL(2,k)}}
This theory is well-known and is beautifully recapped in  \cite[\S15]{DigneMichel1991}.  Let $\sG = \SL(2,\resk)$, $\sT$ a maximal torus of $\sG$ and $\overline{\chi}$ a character of $\sT$ (which is assumed to be  nontrivial if $\sT$ is anisotropic).  The irreducible representations of $\sG$ are parametrized by these pairs $(\sT,\overline{\chi})$ as follows.

If $\sT$ is split and $\overline{\chi}^2\neq \triv$ then $\sigma(\sT,\overline{\chi})$ is an irreducible principal series representation; if $\sT$ is anisotropic and $\overline{\chi}^2\neq \triv$ then $\sigma(\sT,\overline{\chi})$ is a (Deligne--Lusztig) cuspidal representation.  If $\sT$ is split and $\overline{\chi}=\triv$ then $\sigma(\sT,\overline{\chi}) = \triv \oplus \overline{\St}$ where $\overline{\St}$ denotes the Steinberg representation of $\sG$.  

For either $\sT$, when $\overline{\chi}$ is a strictly quadratic character, we obtain two irreducible representations $\sigma^u(\sT,\overline{\chi})$ for $u\in \{1,\ep\}$ (as the components of the restriction $\sigma(\sT,\overline{\chi})$ to $\SL(2,\resk)$ of a corresponding (irreducible) representation of $\GL(2,\resk)$).
They are distinguished by the theory of Gel'fand--Graev representations, as follows.

Let $X\in \g(\resk)^*\setminus\{0\}$ be nilpotent, and identify this with a nilpotent element $\dot{X}\in \g(\resk)$.  Complete this to an $\mathfrak{sl}(2,\resk)$ triple $\{\dot{Y},\dot{H},\dot{X}\}$ and let $\LieU(\resk) = \resk\dot{Y}$.  Then $X$ defines a character of $\sU=\exp(\LieU(\resk))$ by
$
\psi_X(\exp(W)) = \psi(X(W))$ for all $W\in \LieU(\resk)$.
Then the representation of $\SL(2,\resk)$ given by 
\begin{equation}\label{E:GGrep}
\gamma_{\cO} = \Ind_{\sU}^\sG \psi_X
\end{equation}
depends (up to equivalence) only on the nonzero orbit $\cO=\sG\cdot X$, and is called the Gel'fand--Graev representation of $\sG$ associated to $\cO$.

Contrary to convention, we parametrize our nonzero nilpotent orbits by upper triangular matrices $\dot{X}_u \in \g(\resk)$ as in \eqref{E:nilpotentorbits}, where $u \in \resk^\times/(\resk^\times)^2 \sim \{1,\ep\}$.  With respect to this choice, one can compute directly that the character $[\gamma_{\cO_u}]$ of the Gel'fand--Graev representation associated to $\cO=\sG\cdot X_u$ is given by
$$
[\gamma_{\cO_u}](g) = \begin{cases}
q^2-1  & \text{if $g=I$};\\
2\sigma_{-us} = 2\sum_{t \in (\resk^\times)^2}\psi(-ust) & \text{if $g \sim \smat{1&s\\0&1}$};\\
0 & \text{otherwise}.
\end{cases}
$$
By \cite[Thm 14.30]{DigneMichel1991}, the decomposition into irreducible subrepresentations of $\gamma_{\orb_u}$ is mul\-ti\-pli\-city-free.  Using character tables it is straightforward to see that it contains
 all irreducible principal series representations, all Deligne--Lusztig cuspidal representations, the Steinberg representation, and 
exactly one  from each pair of representations arising from quadratic characters.  Our parametrization is therefore as follows: 
for $u\in \resk^\times/(\resk^\times)^2$, and a quadratic character $\overline{\chi}$ of $\sT$, let $\sigma^{u}(\sT,\overline{\chi})$ denote the component of $\sigma(\sT,\overline{\chi})$ occuring in $\gamma_{\cO_u}$.   In the notation of \cite[\S 15]{DigneMichel1991}, the characters of these representations are labeled $\overline{\chi}^\pm$ where $[\sigma^{-1}(\sT,\overline{\chi})] = \overline{\chi}^+$ and $[\sigma^{-\ep}(\sT,\overline{\chi})] = \overline{\chi}^-$.

\subsection{Depth-zero representations of \texorpdfstring{$\SL(2,\ratk)$}{SL(2,F)}}\label{SS:SL2F}  Now let $G=\SL(2,\ratk)$ and let $\chi$ be a depth-zero character of a maximal split or unramified torus.  Assume $\chi$ is nontrivial if the torus is nonsplit.  There are two nonconjugate choices $T_1,T_2$ for an unramified anisotropic torus.  If $x_0$ and $x_1$ are the vertices of the standard alcove, as before, then we can choose representatives $T^i$ of the conjugacy classes of maximal tori such that $T^i\subset G_{x_i}$, for $i\in \{0,1\}$.  Then $\sT_i = T^i_0/T^i_{0+}$ is  a maximal anisotropic torus of $G_{x_i,0}/G_{x_i,0+}=:\sG_i \cong \SL(2,\resk)$.  Let $T$ denote the split torus corresponding to the standard apartment and set $\sT=T_0/T_{0+}$, which is a maximal split torus of both $\sG_1$ and $\sG_2$.   In each case, the character $\chi$ factors to a character $\overline{\chi}$ of the quotient.

In the nonsplit case, for each $i\in \{0,1\}$ inflate the representation $\sigma(\sT_i,\overline{\chi})$ of $\sG_i$ to a representation of $G_{x_i}$ and define $\pi(T^i,\chi) = \cind_{G_{x_i}}^G \sigma(\sT_i,\overline{\chi})$ when $\chi^2\neq \triv$.  When $\chi^2=\triv$ set $\pi^u(T^i,\chi) = \cind_{G_{x_i}}^G \sigma^u(\sT_i,\overline{\chi})$ for $u\in \{1,\ep\}$.  These representations are supercuspidal and irreducible; the latter four were called the special representations.   For $\eta = \smat{1&0\\0&\varpi} \in \GL(2,\ratk)$ we have ${}^\eta \pi^*(T^0,\chi)=\pi^*(T^1,{}^\eta\chi)$, where $*$ indicates that this applies both to the special and nonspecial representations.  
It follows from \cite[Thm 5.3]{Nevins2013} that for any vertex $x=gx_i$, with $g\in G$, the depth-zero component $\pi^*(T^i,\chi)^{G_{x,0+}}$ is the inflation to $G_x$ of ${}^g\sigma^*(\sT_i,\overline{\chi})$, but $\pi^*(T^i,\chi)^{G_{x,0+}}=\{0\}$ if $x$ is not in the $G$-orbit of $x_i$.  

If $T$ is split, contained in a Borel subgroup $B$, then $\pi(T,\chi) = \Ind_B^G\chi$ is again in the principal series.    It is immediate to see that for any vertex $x$, $\pi(T,\chi)^{G_{x,0+}} \cong \sigma(\sT,\overline{\chi})$ under the isomorphism $G_{x,0}/G_{x,0+}\cong \SL(2,\resk)$. In fact, $\pi(T,\chi)$ is irreducible if and only if  $\sigma(\sT,\overline{\chi})$  is; its factors in the remaining cases are as follows.

When $\chi \in \{\nu,\nu^{-1}\}$, its Jordan-H\"older factors are the trivial representation and the Steinberg representation $\St$, and we have $\St^{G_{x,0+}} = \overline{\St}$ and $\triv^{G_{x,0+}} = \triv$.

When $\chi$ is quadratic, it is the sign character $\sgn_\tau$ corresponding to the extension $\extk=\ratk[\sqrt{\tau}]$. As described in \cite[\S8]{Nevins2005}, there is in this case a realization of $\pi(T,\chi)$ on the space $L^2(\ratk^\times)$  such that its two irreducible summands $H^\tau_i$, with $i\in \{+,-\}$, 
are the functions supported $N_i^\tau=\{u\in \ratk^\times\mid i\sgn_\tau(u)=1\}$, respectively.    
Note that the cases $-1\in (\ratk^\times)^2$ and $-1\notin (\ratk^\times)^2$ thus need to be considered separately while computing the components of depth zero, but in fact the result may be stated uniformly, as follows. 

\begin{proposition}\label{P:pigx0+}
For each $\tau\in \{\ep,-\varpi,-\ep\varpi\}$ and $i\in \{0,1\}$, the $\sG_i$-representations $(H_\pm^\tau)^{G_{x_i,0+}}$ are  irreducible and their isomorphism classes are given in Table~\ref{Table:H+}.
\end{proposition}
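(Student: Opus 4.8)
The plan is to reduce the statement to two points — that neither of the $G_{x_i,0+}$‑fixed subspaces vanishes, and the identification of the $\sG_i$‑isomorphism type of each — and to settle the identification using the Gel'fand–Graev description of the constituents of $\sigma(\sT,\overline\chi)$. For the reduction, note that each $H_\pm^\tau$ is a $G$‑stable summand of $\pi(T,\chi)$, hence $G_{x_i,0}$‑stable, so that $(H_+^\tau)^{G_{x_i,0+}}\oplus(H_-^\tau)^{G_{x_i,0+}}=\pi(T,\chi)^{G_{x_i,0+}}$ as representations of $\sG_i=G_{x_i,0}/G_{x_i,0+}$. By \S\ref{SS:SL2F} the right-hand side is $\sigma(\sT,\overline\chi)$, where $\overline\chi$ is the reduction of $\chi=\sgn_\tau$ to $\sT$; this is the trivial character when $\tau=\ep$ (the unramified norm is onto $\RR^\times$) and the nontrivial quadratic character of $\resk^\times$ when $\tau\in\{-\varpi,-\ep\varpi\}$. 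In either case $\sigma(\sT,\overline\chi)$ is a sum of two \emph{inequivalent} irreducibles — $\triv\oplus\overline\St$ in the first case, $\sigma^1(\sT,\overline\chi)\oplus\sigma^\ep(\sT,\overline\chi)$ in the second — so as soon as both $(H_\pm^\tau)^{G_{x_i,0+}}$ are nonzero, uniqueness of the isotypic decomposition forces each to be irreducible and to equal one of the two constituents. Hence everything comes down to non‑vanishing together with the correct assignment.

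For the assignment I would use that $\sigma^u(\sT,\overline\chi)$ (resp. $\overline\St$) is the unique constituent of $\sigma(\sT,\overline\chi)$ (resp. of $\triv\oplus\overline\St$) carrying a nonzero Whittaker functional for the character of $\sU_i$ attached to $\cO_u$ — i.e. occurring in the Gel'fand–Graev representation $\gamma_{\cO_u}=\Ind_{\sU_i}^{\sG_i}\psi_{X_u}$ of \eqref{E:GGrep} — while $\triv$ carries none. Working in the model of $\pi(T,\chi)$ on $L^2(\ratk^\times)$ from \cite[\S8]{Nevins2005}, the unipotent radical $U$ of $B$ acts by $\psi$‑twisted multiplication, so the twisted point evaluations $f\mapsto f(a)$ are Whittaker functionals, each manifestly nonzero on exactly one of $H_+^\tau$, $H_-^\tau$ — namely the one whose support $N_\epsilon^\tau$ contains $a$ — and, since the $G_{x_i,0+}$‑fixed vectors are locally constant functions, such a functional stays nonzero after restriction to the fixed subspace. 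Matching the resulting character of $\sU_i$ against the $\psi_{X_u}$ via the Moy–Prasad dictionary at $x_i$ expresses the index $u$ in terms of the square class of $a$, where $a$ runs over units at $x_0$ and over elements of valuation one at $x_1$ — the valuation shift being exactly the one induced by the $\GL(2,\ratk)$‑reflection $\omega$, resp. translation $\eta$, of Example~\ref{E:omegaeta} that swaps $x_0$ and $x_1$. Thus $(H_\epsilon^\tau)^{G_{x_i,0+}}$ carries a nonzero $\psi_{X_u}$‑Whittaker functional precisely when $N_\epsilon^\tau$ meets the relevant square class; since $\sgn_\tau$ is nontrivial, each sign $\epsilon$ is realized by some such class, which yields simultaneously non‑vanishing and the explicit assignment. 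Evaluating $\sgn_\tau$ on $\{1,\ep\}$ (for $x_0$) and on $\{\varpi,\ep\varpi\}$ (for $x_1$), for each of $\tau\in\{\ep,-\varpi,-\ep\varpi\}$, then fills in Table~\ref{Table:H+}; alternatively one can read these values directly off the explicit decompositions already computed in \cite[\S8]{Nevins2005}.

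The delicate part — and the step I expect to be the main obstacle — is precisely the dictionary underlying the previous paragraph: relating the coordinate on $\ratk^\times$ used in the model of \cite{Nevins2005}, the Whittaker parameter $a$, the square class of $a$ at each vertex, and the Gel'fand–Graev label $u$, all while keeping track of whether $-1\in(\ratk^\times)^2$, which governs both the number of distinct ramified classes $\tau$ and the precise values of $\sgn_\tau$. One must in particular be careful that for the unramified $\tau=\ep$ the split $N_\pm^\ep$ is by parity of valuation — so that $\triv$ and $\overline\St$ are interchanged between the $x_0$ and $x_1$ columns — whereas for the ramified $\tau$ the split occurs inside $\RR^\times$, so that the two ramified rows agree in the $x_0$ column and differ only in the $x_1$ column (consistently with $\sgn_{-\varpi}\cdot\sgn_{-\ep\varpi}=\sgn_\ep$), and that the conventions match those used for the model in \cite{Nevins2005} and for the depth‑zero components in \cite[Thm 5.3]{Nevins2013}. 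A useful consistency check at the end is that the $x_0$ and $x_1$ columns of Table~\ref{Table:H+} are interchanged correctly under the conjugation $\eta\cdot x_0=x_1$, and that the resulting assignment is compatible with the known wave front sets of these representations.
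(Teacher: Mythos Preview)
Your reduction step is the same as the paper's: both begin from $(H_+^\tau)^{G_{x_i,0+}}\oplus(H_-^\tau)^{G_{x_i,0+}}\cong\sigma(\sT,\overline\chi)$ and observe that the right side has exactly two inequivalent irreducible constituents. From there, however, the approaches diverge. The paper does not attempt an intrinsic Whittaker argument; for $x_0$ it simply cites the character computations of \cite[Thm~9.1]{Nevins2005} (carefully translating the notation $\Xi^\pm_{\sgn}=\chi^\mp_{\alpha_0}$ into the Gel'fand--Graev labelling $\sigma^u$), and for $x_1$ it transports the $x_0$ answer via conjugation by $\omega=\smat{0&1\\\varpi&0}$, using \cite[Cor~9.3]{Nevins2005} to decide when $\omega$ preserves or swaps $H_\pm^\tau$, and separately tracking that ${}^\omega\sigma^u(\sT,\overline\chi)=\sigma^{-u}(\sT,\overline\chi)$ because ${}^\omega\cO_u=\cO_{-u\varpi}$. (The paper in fact flags that this last step was omitted in \cite{Nevins2005}, leading to an error there.) Your ``alternative'' at the end is essentially this route.

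Your primary route via point-evaluation Whittaker functionals in the $L^2(\ratk^\times)$ model is a genuinely different and conceptually attractive idea, but as written it has a gap at exactly the step you flag as delicate. The sentence ``since the $G_{x_i,0+}$-fixed vectors are locally constant functions, such a functional stays nonzero after restriction to the fixed subspace'' does not follow: \emph{all} smooth vectors are locally constant, so this gives no information, and a priori the $G_{x_i,0+}$-fixed functions in $H^\tau_\epsilon$ could all vanish at your chosen $a$ --- indeed, could all vanish identically, which is precisely the non-vanishing you are trying to establish. To repair this you would need to exhibit an explicit $G_{x_i,0+}$-fixed vector supported on $N^\tau_\epsilon$ and nonzero at $a$, and for that you must control the action of the \emph{lower} unipotent part of $G_{x_i,0+}$ in the Kirillov-type model, which is the genuinely nontrivial ingredient (typically a Fourier transform or Weil-representation formula). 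Dimension counting rescues you only in the case $\tau=\ep$, where $\dim\triv\neq\dim\overline\St$; in the ramified cases $\dim\sigma^1=\dim\sigma^\ep=(q+1)/2$, so one cannot rule out $(H^\tau_+)^{G_{x_i,0+}}=0$ without further input. The paper sidesteps all of this by invoking the character computation already carried out in \cite{Nevins2005}.
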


\begin{table}[ht]
\begin{tabular}{|c||cc|cc|cc|}
\hline $\pi$ & $H^\ep_{+}$ & $H^\ep_{-}$ & $H^{-\varpi}_{+}$ & 
$H^{-\varpi}_{-}$ & $H^{-\ep\varpi}_{+}$ & $H^{-\ep\varpi}_{-}$\\
\hline 
$\pi^{G_{x_0,0+}}$ & $\overline{\St}$ & $\triv$ & $\sigma^{1}(\sT,\sgn)$ & $\sigma^{\ep}(\sT,\sgn)$ & $\sigma^{1}(\sT,\sgn)$ & $\sigma^{\ep}(\sT,\sgn)$\\
$\pi^{G_{x_1,0+}}$ & $\triv$ & $\overline{\St}$ & $\sigma^{1}(\sT,\sgn)$ & $\sigma^{\ep}(\sT,\sgn)$ & $\sigma^{\ep}(\sT,\sgn)$ & $\sigma^{1}(\sT,\sgn)$\\
\hline
\end{tabular}
\caption{The isomorphism classes of the depth-zero representations of $G_{x_i}$ occuring in the restriction to $G_{x_i}$ of the decomposable principal series.}\label{Table:H+}
\end{table}

\begin{proof}
Without loss of generality we may assume $x_0,x_1 \in \apart(G,T)$.  Since $\overline{\sgn_\ep} = \triv$ and $\overline{\sgn_{-\varpi}}=\overline{\sgn_{-\ep\varpi}}=\sgn$, the unique quadratic character of $\sT$, we immediately have the relation
$$
(H^\tau_{+})^{G_{x,0+}} \oplus (H^\tau_{-})^{G_{x,0+}} \cong 
\sigma^1(\sT,\overline{\chi})\oplus \sigma^\ep(\sT,\overline{\chi}),
$$
for any vertex $x\in \apart(G,T)$.

The restriction to $G_{x_0,0+}$ of these components was determined via character computations in \cite[Thm 9.1]{Nevins2005}, where (in the notation of that paper, of \cite[\S15]{DigneMichel1991}, and ours, respectively), $\Xi^+_{\sgn}=\chi^-_{\alpha_0}=[\sigma^{-\ep}(\sT,\sgn)]$ and $\Xi^-_{\sgn}=\chi^+_{\alpha_0}=[\sigma^{-1}(\sT,\sgn)]$.  The negative signs used in our parametrizations simplify the statement of the result, yielding the first row of the table.  

For the restriction to $G_{x_1,0+}$, the proof of \cite[Cor 9.3]{Nevins2005} showed that twisting $\pi(T,\sgn_\tau)$ by $\omega=\smat{0&1\\\varpi&0} \in \GL(2,\ratk)$ preserves $H_{\pm}^\tau$ when $\sgn_\tau(-\varpi)=1$ and interchanges them otherwise.  Applying this twist to $\pi^{G_{x_0,0+}}$ yields $\pi^{G_{x_1,0+}}$ and sends a representation $\sigma$ of $G_{x_0}$ to the representation ${}^\omega \sigma$ of $G_{x_1}$.  Twisting by $\omega$ sends the inflation of the  representation $\sigma^u(\sT,\overline{\chi})$ of $G_{x_0}$ to the inflation of the  representation $\sigma^{-u}(\sT,\overline{\chi})$ of $G_{x_1}$, since it takes $\orb_u$ to $\orb_{-u\varpi}$, which in turn determines the choice of Gel'fand--Graev representation $\gamma_{\orb_u}$.\footnote{This calculation was neglected in the proof of \cite[Cor 9.3]{Nevins2005}, yielding an incorrect statement for the depth-zero components.} A careful accounting of signs yields the second row of the table. 
\end{proof}

For convenience, we list the isomorphism class of $\pi^{G_{x,0+}}$ for the remaining irreducible representations $\pi$ in Table~\ref{Table:depthzerocomponents}. 

\begin{table}[ht]
\begin{tabular}{|c|c|cc|}
\hline $T$ & $\pi$ & $\pi^{G_{x,0+}}$& \\
\hline \hline
$T$ split & $\pi=\pi(T,\chi)$ & $\sigma(\sT, \overline{\chi})$ & \\ \hline
&  $\pi=\St$ &  ${}^g\overline{\St}$ &   \\ \hline \hline
$T^i$ unramified &  $\pi = \pi(T^i,\chi)$ & $\sigma(\sT_i, \overline{\chi})$ & if $x \sim x_i$\\
$i\in \{0,1\}$ & & $\{0\}$ & if $x\not\sim x_i$ \\ \hline
\end{tabular}
\caption{The depth-zero representations of $G_{x}$ occuring in the restriction to $G_x$ of the irreducible principal series, Steinberg and supercuspidal representations, for any vertex $x\in \buil(G)$.}\label{Table:depthzerocomponents}
\end{table}

\subsection{Wave front sets}
The wave front set is determined with the following result that is based on \cite[Thm 4.5]{BarbaschMoy1997}.

\begin{proposition}\label{P:BM}
Let $\pi$ be an irreducible representation of depth zero of $\SL(2,\ratk)$.  Suppose $\mathrm{char}(F)=0$ and $p>3e+1$, where $e$ is the absolute ramification index of $\ratk$ over $\mathbb{Q}_p$.  Then we have
$$
\WF(\pi) = \{ \cO \in \setorbit \mid \exists x \text{ a vertex of }\buil(G), \sigma \in \pi^{G_{x,0+}}  \text{such that $\sigma$ occurs in $\gamma_\cO$}\},
$$
where $\gamma_\cO$ is the Gel'fand--Graev representation \eqref{E:GGrep} of $\sG_x \cong \SL(2,\resk)$.
\end{proposition}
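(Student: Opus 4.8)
The plan is to deduce the proposition from the depth-zero local character expansion results of Barbasch and Moy, \cite[Thm 4.5]{BarbaschMoy1997}, combined with the explicit rank-one dictionary between rational nilpotent orbits, vertices of $\buil(G)$, and nilpotent orbits in the duals of the reductive quotients $\sG_x$.

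First I would record the input from \cite[Thm 4.5]{BarbaschMoy1997}. For $\pi$ of depth zero and each \emph{maximal} nilpotent $G$-orbit $\cO\subset\g^*$, that theorem expresses the local-expansion coefficient $c_\cO(\pi)$ as a finite sum of non-negative integers $\dim\Hom_{\sG_x}(\gamma_{\bar\cO_x},\pi^{G_{x,0+}})$, indexed by the $G$-conjugacy classes of vertices $x$ at which $\cO$ admits a representative of depth $0$; here $\bar\cO_x\subset(\g_{x,0}/\g_{x,0+})^*$ is the reduction of $\cO$ at $x$ (in the sense of \cite[Prop 3.1.6]{BarbaschMoy1997}) and $\gamma_{\bar\cO_x}$ is its (generalized) Gel'fand--Graev representation. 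The feature we exploit is the non-negativity: $c_\cO(\pi)\neq 0$ exactly when some $\pi^{G_{x,0+}}$ shares an irreducible constituent with some $\gamma_{\bar\cO_x}$. The hypotheses for invoking this theorem --- validity of the local character expansion on $\g_{0+}$ and of the homogeneity statements of \cite{DeBackerHomogeneity2002} at the parahoric subgroups $G_{x,0}$, plus a $G$-equivariant map $\g_{0+}\to G_{0+}$ compatible with reduction mod $G_{x,0+}$ --- are precisely what $\mathrm{char}(F)=0$ and $p>3e+1$ provide; this is a strictly stronger requirement than the $p>e+1$ that already suffices for the bare local character expansion of $\SL(2,F)$.

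Next I would make the rank-one dictionary explicit, noting first that for $\SL(2)$ every nonzero nilpotent orbit is regular, hence maximal, so the previous step applies to all of them. There are two $G$-conjugacy classes of vertices in $\buil(\SL(2,F))$, represented by the endpoints $x_0,x_1$ of the fundamental alcove, with the identifications $\sG_{x_i}\cong\SL(2,\resk)$ fixed in Section~\ref{SS:SL2F}. From the parametrization \eqref{E:nilpotentorbits} of the four regular orbits $\cO_1,\cO_\ep,\cO_\varpi,\cO_{\ep\varpi}$ --- and the affine reflection $\omega$ of Example~\ref{E:omegaeta} carrying $x_0$ to $x_1$ --- one checks that $\cO_1,\cO_\ep$ are exactly the orbits with a depth-$0$ representative at $x_0$, reducing there to the two nonzero nilpotent orbits of $\sG_{x_0}$, while $\cO_\varpi,\cO_{\ep\varpi}$ are exactly those with a depth-$0$ representative at $x_1$; concretely, a representative of $\cO_u$ has depth at $x_0$ of the same parity as $\val(u)$, and no orbit reduces at both vertices. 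Since the orbits in $\g(\resk)^*$ in play are regular, their generalized Gel'fand--Graev representations are the ordinary ones $\Ind_\sU^\sG\psi_X$ of \eqref{E:GGrep}; transported along $\sG_x\cong\SL(2,\resk)$, the representation $\gamma_{\bar\cO_x}$ is thus literally the $\gamma_\cO$ of the statement.

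Finally I would assemble these. The four orbits $\cO_u$ being pairwise incomparable and of top dimension, $\WF(\pi)$ consists exactly of those $\cO_u$ with $c_{\cO_u}(\pi)\neq 0$ --- together with $\{0\}$ only when all four coefficients vanish, which for $\SL(2,F)$ forces $\pi$ one-dimensional, hence trivial, a case excluded from the right-hand side (where the $\gamma_\cO$ are defined only for $\cO\neq 0$). Combining with the non-negativity of the first step, $c_{\cO_u}(\pi)\neq 0$ precisely when there is a vertex $x$ at which $\cO_u$ reduces and an irreducible constituent of $\pi^{G_{x,0+}}$ occurring in $\gamma_{\cO_u}$ --- which is the defining condition of the right-hand side. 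The main obstacle is the first step: quoting \cite[Thm 4.5]{BarbaschMoy1997} in exactly this form and confirming that its standing hypotheses hold under $p>3e+1$, which amounts to tracking the homogeneity bounds of \cite{DeBackerHomogeneity2002}; the rank-one dictionary of the second step, by contrast, is elementary given \eqref{E:nilpotentorbits} and Example~\ref{E:omegaeta}.
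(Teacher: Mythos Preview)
Your proposal is correct and takes essentially the same approach as the paper: both invoke \cite[Thm 4.5]{BarbaschMoy1997}, pair the Gel'fand--Graev test functions against the local character expansion so that $\Theta_\pi([\gamma_\cO])$ is a nonzero multiple of $c_\cO(\pi)$ for maximal $\cO$, and then use the multiplicity identity $\Theta_\pi([\gamma_\cO])=\sum_\sigma m(\sigma,\pi)m(\sigma,\gamma_\cO)$ together with the rank-one fact that every nonzero nilpotent orbit is maximal. Your phrasing that $c_\cO(\pi)$ is literally a sum of $\dim\Hom$ terms slightly overstates what Barbasch--Moy prove (there is a positive normalization constant $\widehat{\mu_\cO}(f_{x,\cO})$), but since only nonvanishing is at stake the argument is unaffected.
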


\begin{proof}
The hypotheses imply that $\exp$ converges on $\g_{0+}$ and that the local character expansion holds.  In \cite{BarbaschMoy1997}, Barbasch and Moy used (generalized) Gel'fand--Graev characters as test functions to determine the  wave front set of $\pi$.  For each nilpotent orbit $\cO$ that is represented by a depth-zero coset at some $x$, let $[\gamma_\cO]$ denote the lift to $G_{x,0}$ of the character of the corresponding Gel'fand--Graev representation of $\sG_x = G_{x,0}/G_{x,0+}$, viewed as a function on $G$.  It is supported on the subset $G_{0+}\cap G_{x,0}$ of topologically unipotent elements.  Let $f_{x,\cO}$ be the function on $\g$, with support in $\g_{0+}\cap\g_{x,0}$, that is given by $f_{x,\cO} = [\gamma_\cO]\circ \exp$.
 Then they show that $\widehat{\mu_{\cO'}}(f_{x,\cO})=0$ if $\cO\not\subset \overline{\cO'}$ and is nonzero if $\cO=\cO'$.  Thus $\Theta_\pi([\gamma_\cO]) =0$ for all $\cO$ that do not meet the wave front set of $\pi$ and is nonzero when $\cO\in \WF(\pi)$.  

For any irreducible representation $\sigma$ of $\sG_x$, let $m(\sigma,\pi)$ denote the multiplicity of (the inflation of) $\sigma$ in $\pi^{G_{x,0+}}$ and $m(\sigma,\gamma_\cO)$ the multiplicity of $\sigma$ in $\gamma_\cO$.  Then \cite[Thm 4.5(4)]{BarbaschMoy1997} becomes
$$
\Theta_\pi([\gamma_{\cO}]) = \sum_{\sigma}m(\sigma,\pi)m(\sigma,\gamma_\cO),
$$
whence our result for the case of $\SL(2,\ratk)$.
\end{proof}

\begin{corollary}
Under the hypothesis of Proposition~\ref{P:BM}, the wave front sets corresponding to the depth-zero representations of $\SL(2,\ratk)$ are as given in Table~\ref{Table:WF}.
\end{corollary}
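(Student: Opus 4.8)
The plan is to apply Proposition~\ref{P:BM} mechanically, one vertex class at a time. That proposition identifies $\WF(\pi)$ with the set of nonzero nilpotent $G$-orbits $\cO$ such that some irreducible constituent $\sigma$ of $\pi^{G_{x,0+}}$, for some vertex $x\in\buil(G)$, occurs in the Gel'fand--Graev representation $\gamma_\cO$ of $\sG_x\cong\SL(2,\resk)$; when no nonzero orbit arises this way one has $\WF(\pi)=\{0\}$. The two inputs are already on hand: the depth-zero components $\pi^{G_{x,0+}}$ are tabulated for every $\pi$ and every vertex $x$ (up to $G$-conjugacy) in Proposition~\ref{P:pigx0+} and Tables~\ref{Table:H+} and~\ref{Table:depthzerocomponents}, and the discussion above shows that each $\gamma_{\cO_u}$ is multiplicity free and contains every irreducible principal series of $\SL(2,\resk)$, every Deligne--Lusztig cuspidal, the Steinberg representation $\overline{\St}$, and, of each pair $\sigma^1(\sT,\overline\chi)\oplus\sigma^\ep(\sT,\overline\chi)$ attached to a quadratic character, precisely the summand $\sigma^u(\sT,\overline\chi)$ --- but never the trivial representation.

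First I would pin down the dictionary relating residue-field nilpotent orbits to rational ones: the nilpotent $G$-orbits meeting a degenerate coset of depth $0$ at $x_0$ are exactly $\orb_1$ and $\orb_\ep$, and those at $x_1$ are exactly $\orb_\varpi$ and $\orb_{\ep\varpi}$; since $\WF(\pi)$ is a $G$-invariant set of orbits and every vertex is $G$-conjugate to $x_0$ or $x_1$, it suffices to collect contributions from these two. Under the fixed identification $\sG_{x_0}\cong\SL(2,\resk)$, the Gel'fand--Graev representation $\gamma_{\cO_u}$ detects $\orb_u$, and the analogous statement at $x_1$ is obtained by transporting along the translation $\eta$ of Example~\ref{E:omegaeta} (which carries $x_0$ to $x_1$ and $\cO_u$ to $\cO_{\varpi u}$), compatibly with the conventions already fixed in Proposition~\ref{P:pigx0+}.

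With this dictionary the table is filled in case by case. For an irreducible principal series $\pi(T,\chi)$ with $T$ split, $\pi^{G_{x,0+}}\cong\sigma(\sT,\overline\chi)$ at every vertex, and every irreducible constituent of $\sigma(\sT,\overline\chi)$ lies in both $\gamma_{\cO_1}$ and $\gamma_{\cO_\ep}$ over $\sG_{x_0}$, and similarly over $\sG_{x_1}$; hence $\WF(\pi)$ consists of all four principal orbits, and the same holds for the Steinberg representation since $\St^{G_{x,0+}}=\overline{\St}$ everywhere. For the trivial representation $\triv^{G_{x,0+}}=\triv$ occurs in no $\gamma_\cO$, so $\WF(\triv)=\{0\}$. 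For the reducible quadratic principal series $H^\tau_\pm$ with $\tau\in\{\ep,-\varpi,-\ep\varpi\}$, Tables~\ref{Table:H+} and~\ref{Table:depthzerocomponents} supply a single irreducible component at each of $x_0$ and $x_1$, and reading off which $\gamma_{\cO_u}$ it sits in at each vertex produces exactly two principal orbits, the pair depending on $\tau$ and the sign. For a special supercuspidal $\pi^u(T^i,\chi)$ only vertices $G$-conjugate to $x_i$ contribute a nonzero depth-zero component, namely a conjugate of $\sigma^u(\sT_i,\overline\chi)$, which lies in $\gamma_{\cO_u}$ alone; thus $\WF(\pi^u(T^i,\chi))$ is a single rational orbit, exhibiting the failure of the wave front set to be a stable orbit. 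For a non-special unramified supercuspidal $\pi(T^i,\chi)$ the component at $x\sim x_i$ is the cuspidal $\sigma(\sT_i,\overline\chi)$, lying in both $\gamma_{\cO_1}$ and $\gamma_{\cO_\ep}$, so $\WF$ has two orbits.

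The only real difficulty I anticipate is bookkeeping: keeping consistent, across the two vertex classes and the $\eta$- and $\omega$-conjugations of Example~\ref{E:omegaeta} (which send $\cO_u$ to $\cO_{\varpi u}$ and $\cO_{-\varpi u}$ respectively), which rational square class labels the residue orbit detected by each Gel'fand--Graev representation, and correctly matching the $\pm$-labels in Table~\ref{Table:H+} --- which originate in Deligne--Lusztig/Shalika data --- with the index $u\in\{1,\ep\}$ of $\gamma_{\cO_u}$, together with the (harmless but easily mishandled) dependence on whether $-1\in(\ratk^\times)^2$. Once these conventions are held fixed as in Proposition~\ref{P:pigx0+} and the surrounding discussion, every entry of Table~\ref{Table:WF} follows immediately from Proposition~\ref{P:BM}.
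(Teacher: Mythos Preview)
Your proposal is correct and follows essentially the same approach as the paper's proof: you establish the dictionary that $\orb_{u}$ and $\orb_{u\varpi}$ (for $u\in\{1,\ep\}$) are the rational nilpotent orbits seen at depth zero at $x_0$ and $x_1$ respectively, reduce to these two vertex classes by $G$-conjugacy, and then read off $\WF(\pi)$ by matching the depth-zero components from Tables~\ref{Table:H+} and~\ref{Table:depthzerocomponents} against the known constituents of each Gel'fand--Graev representation. Your version is somewhat more explicit in spelling out the individual cases and in flagging the bookkeeping hazards with $\eta$ versus $\omega$, but the logical structure is identical to the paper's.
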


\begin{table}
\begin{tabular}{||p{1in}|p{1in}||p{1in}|p{1in}||} 
\hline \multicolumn{3}{||c|}{Representation $\pi$} & $\WF(\pi)$\\
\hline
\multicolumn{3}{||c|}{$\pi(T,\chi)$ 	irreducible principal series}	&	$\{1,\ep,\varpi,\ep\varpi\}$\\ 
\multicolumn{3}{||c|}{$\pi(T_0,\chi)$ 	irreducible supercuspidal 	}	&$\{1,\ep\}$\\ 
\multicolumn{3}{||c|}{$\pi(T_1,\chi)$ 	irreducible supercuspidal 	}	&$\{\varpi,\ep\varpi\}$\\ \hline \hline
$\pi$ & $\WF(\pi)$ & $\pi$ & $\WF(\pi)$\\ \hline
$\triv$	&$\{0\}$	&$\St$&	$\{1,\ep,\varpi,\ep\varpi\}$\\
$H^\ep_+$	&$\{1,\ep\}$ &	$H^\ep_{-}$ 	&$\{\varpi,\ep\varpi\}$\\
$H^{-\varpi}_+$&	$\{1,\varpi\}$ &	$H^{-\varpi}_{-}$ &	$\{\ep,\ep\varpi\}$\\
$H^{-\ep\varpi}_+$	&$\{1,\ep\varpi\}$ &	$H^{-\ep\varpi}_{-}$ &	$\{\ep,\varpi\}$\\
$\pi^1(T_0,\sgn)$&	$\{1\}$	&$\pi^\ep(T_0,\sgn)$	&$\{\ep\}$\\ 
$\pi^1(T_1,\sgn)$&	$\{\varpi\}$	&$\pi^\ep(T_1,\sgn)$	&$\{\ep\varpi\}$\\
\hline
\end{tabular}
\caption{For each irreducible depth-zero representation of $\SL(2,\ratk)$, we list under the heading $\WF(\pi)$ the set of elements $u \in \{0,1,\ep,\varpi,\ep\varpi\}$ such that $\orb_u\in \WF(\pi)$.}
\label{Table:WF}
\end{table}

\begin{proof}
For $u\in \{1,\ep\}$ and $j \in \{0,1\}$, the nilpotent orbit $\orb_{u\varpi^j}$ is represented by a depth-zero coset at $x_i$  if and only if $i=j$, and in this case it corresponds to the nilpotent orbit in the quotient $\g_{x_i,0}/\g_{x_i,0+}\cong \mathfrak{sl}(2,\resk)$ under $\sG_i\cong \SL(2,\resk)$ that we denoted $\orb_u$.  Therefore the Gel'fand--Graev representations $\gamma_\orb$ referred to in  Proposition~\ref{P:BM} are $\gamma_{\orb_1}$ and $\gamma_{\orb_\ep}$ for $x=x_0$, and $\gamma_{\orb_{\varpi}}$ and $\gamma_{\orb_{\ep\varpi}}$ for $x=x_1$.
By conjugacy, these two vertices suffice.  The decomposition of  $\pi^{G_{x,0+}}$ for $x\in \{x_0,x_1\}$ was given in  Tables~\ref{Table:H+} and \ref{Table:depthzerocomponents} for all irreducible depth-zero representations $\pi$, and matching these with the decomposition of the Gel'fand--Graev representations of the corresponding groups $\sG_i$ yields Table~\ref{Table:WF}.
\end{proof}

In light of Proposition~\ref{P:BM}, we may define $\WF(\pi)$ by Table~\ref{Table:WF}, even over fields where Proposition~\ref{P:BM} does not apply.  Theorem~\ref{T:zerodepth2} below expresses that, just as in the positive-depth case, this is consistent (for all fields with residual characteristic different from $2$).

\begin{corollary}
For each depth-zero irreducible representation $\pi$ of $\SL(2,\ratk)$ there exists an element $\Gamma\in \g_{x,0}^*$, for some $x\in \buil(G)$, such that $\WF(\pi)=\nil(\Gamma)$.
\end{corollary}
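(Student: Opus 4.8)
The plan is to prove the statement by inspection, combining the explicit list of wave front sets in Table~\ref{Table:WF} with the explicit description of nilpotent supports in Lemma~\ref{L:NilGamma} together with the remark that $\nil(\Gamma)=G\cdot\Gamma$ for nilpotent $\Gamma$. First I would record which subsets of $\{\orb_0,\orb_1,\orb_\ep,\orb_\varpi,\orb_{\ep\varpi}\}$ can occur on each side. By Lemma~\ref{L:NilGamma} (and $\nil(0)=\{0\}$), the possible values of $\nil(\Gamma)$ for $\Gamma\in\g^*$ are: $\{0\}$; a singleton $\{\orb_u\}$ (exactly when $\Gamma\in\orb_u$ is nilpotent); the full set of four principal orbits (when $\Gamma$ is semisimple and splits over $\ratk$); and the pairs $\{\orb_u,\orb_{u\gamma}\}$ where $\Gamma$ is semisimple splitting over a quadratic $\extk/\ratk$ and $\{1,\gamma\}$ represents $\Norm_{\extk/\ratk}(\extk^\times)/(\ratk^\times)^2$. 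On the other hand Table~\ref{Table:WF} shows that $\WF(\pi)$, as $\pi$ ranges over the depth-zero irreducibles, takes the values $\{0\}$ (for $\triv$), the four singletons (for the four special supercuspidals $\pi^u(T_i,\sgn)$), the full four-element set (for irreducible principal series and $\St$), and all six two-element subsets (for the summands $H^\tau_\pm$ of the decomposable quadratic principal series, and again for the irreducible unramified supercuspidals). So it remains only to exhibit, for each of these sets, an element $\Gamma\in\g^*_{x,0}$ with that nilpotent support.

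Next I would produce the witnesses. For $\{0\}$ take $\Gamma=0$. For a singleton $\{\orb_u\}$ with $u\in\{1,\ep,\varpi,\ep\varpi\}$ take the nilpotent $\algy_u=\smat{0&u\\0&0}$, which has integral entries, hence lies in $\g_{x_0,0}$, with $\nil(\algy_u)=\orb_u$. For the full set take the split element $\Gamma=\diag(1,-1)\in\g_{x_0,0}$. For the six two-element sets I would run over the three quadratic extensions of $\ratk$: the unramified extension $\ratk[\sqrt\ep]$, for which $\Norm_{\extk/\ratk}(\extk^\times)=\{x:\val(x)\in 2\Z\}$ and so $\gamma=\ep$, giving the pairs $\{\orb_1,\orb_\ep\}$ and $\{\orb_\varpi,\orb_{\ep\varpi}\}$; and the two ramified extensions $\ratk[\sqrt{\varpi_0}]$ with $\varpi_0\in\{\varpi,\ep\varpi\}$, for which a short computation (using $\Norm(\RR_\extk^\times)=(\RR^\times)^2$ for $p$ odd and $-\varpi_0=\Norm(\sqrt{\varpi_0})$) gives $\gamma=-\varpi_0$; one checks that together these produce exactly the remaining four pairs $\{\orb_1,\orb_\varpi\},\{\orb_\ep,\orb_{\ep\varpi}\},\{\orb_1,\orb_{\ep\varpi}\},\{\orb_\ep,\orb_\varpi\}$, and that the set of pairs so produced is the same regardless of whether $-1$ is a square in $\ratk$ (only the assignment of pairs to the two ramified extensions changes). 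For a pair coming from a ramified $\extk$ I would take $\Gamma=\algy(u,v)$ with $u\in\RR^\times$ and $v\in\PP\setminus\PP^2$ so that $uv$ represents the square class of $\extk$; then $\Gamma\in\g_{x_0,0}\setminus\g_{x_0,0+}$ and $\nil(\Gamma)=\{\orb_u,\orb_{u\gamma}\}$ is the desired pair. For $\{\orb_1,\orb_\ep\}$ take $\Gamma=\algy(1,\ep)\in\g_{x_0,0}$.

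The one place that needs care, and the main (if minor) obstacle, is the pair $\{\orb_\varpi,\orb_{\ep\varpi}\}$: by the listing above it is realized only by the unramified extension, through a semisimple $\Gamma$ whose centralizer is the second conjugacy class $T_1$ of unramified anisotropic torus; and no $G$-conjugate of such a $\Gamma$ lies in $\g_{x_0,0}$, since an integral semisimple element of depth zero with nonsquare unit eigenvalues generates the torus attached to $x_0$, which is $T_0$, not $T_1$. Here I would instead use the other vertex $x_1$ (the vertex to which $T_1$ is attached): take $\Gamma_1\in\LieT_{1}$ generating $\LieT_{1,0}/\LieT_{1,0+}$, so that $\Gamma_1\in\g_{x_1,0}\setminus\g_{x_1,0+}$ and $\Cent_G(\Gamma_1)=T_1$; since $\Gamma_1$ is $G$-conjugate to $\algy(\varpi^{-1},\ep\varpi)$, Lemma~\ref{L:NilGamma} gives $\nil(\Gamma_1)=\{\orb_\varpi,\orb_{\ep\varpi}\}$. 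Apart from this, the verification is bookkeeping: one checks each witness genuinely lies in the prescribed set $\g^*_{x,0}$, and notes that Lemma~\ref{L:NilGamma} applies because it hypothesizes only that $\Gamma$ be nonzero semisimple, placing no constraint on $d(\Gamma)$ (indeed the relevant $\Gamma$ typically have $d(\Gamma)\in\{0,\frac12\}$, the degenerate cosets responsible for $\nil(\Gamma)$ occurring at points of negative depth, or, in the ramified case, at the vertex $x_0$). Matching the two lists then identifies each $\WF(\pi)$ with some $\nil(\Gamma)$, $\Gamma\in\g^*_{x,0}$, completing the proof.
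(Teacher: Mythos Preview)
Your proposal is correct and follows exactly the paper's approach: the paper's own proof is the single sentence ``Existence follows immediately from Table~\ref{Table:WF} and Lemma~\ref{L:NilGamma},'' and you have carried out precisely that matching, with the added remark that $\nil(\Gamma)=G\cdot\Gamma$ for nilpotent $\Gamma$ to handle the trivial and special supercuspidal cases.

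One small point: your detour for the pair $\{\orb_\varpi,\orb_{\ep\varpi}\}$ is unnecessary, and the intermediate claim that this pair is realized \emph{only} through a $\Gamma$ with centralizer in the conjugacy class of $T_1$ is not correct.  The element $\Gamma=\algy(\varpi,\ep\varpi)=\varpi\,\algy(1,\ep)$ lies in $\g_{x_0,1}\subset\g_{x_0,0}$, has centralizer $T_0$ (being a scalar multiple of $\algy(1,\ep)$), and by Lemma~\ref{L:NilGamma} satisfies $\nil(\Gamma)=\{\orb_\varpi,\orb_{\ep\varpi}\}$ since here $u=\varpi$ and $\gamma=\ep$.  The statement only requires $\Gamma\in\g^*_{x,0}$, not that $\Gamma$ have depth exactly $0$ at $x$.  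Your witness at $x_1$ is nonetheless valid, so this does not affect correctness.
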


Existence follows immediately from Table~\ref{Table:WF} and Lemma~\ref{L:NilGamma}, though the elements $\Gamma$ for which $\WF(\pi)=\nil(\Gamma)$  do not correspond to  minimal $K$-types for $\pi$ (as these latter are not realized by elements on the Lie algebra).  However, on an \emph{ad-hoc} basis, we can make this association of $\pi$ with $\Gamma$ more explicit, as follows.  

For $T$ unramified or split, and $x\in \buil(T)\subset \buil(G)$, we can in the same spirit attach to any \emph{regular} $\pi(T,\chi)$  (in the sense of Kaletha \cite[Prop 3.4.27]{Kaletha2019})  any element $\Gamma\in \g_{x,0}^*$ whose centralizer in $G$ is $T$. The same holds for $\pi=\St$, whereas we associate $\Gamma=0$ to $\triv$.

When $\pi^u(T^i,\sgn)$ is a special representation (for some $u\in \{1,\ep\}$ and $i\in \{0,1\}$) then it is a supercuspidal unipotent representation and $\Gamma$ is chosen to be a nilpotent element in the lift to $\g_{x_i,0}^*$ of the nilpotent orbit corresponding to $\sigma^u(\sT_i,\sgn)$.

When $\pi \in \{H^\tau_{\pm} \mid \tau \in \{\ep,\varpi,\ep\varpi\}\}$,   $\Gamma$ is a choice of element of an anisotropic torus $T$ that splits over $\ratk[\sqrt{\tau}]$. However, while the orbit of $\Gamma$ satisfies $\nil(\Gamma)=\WF(\pi)$, its centralizer in this case need not correspond to $\pi$: when $-1\in (\ratk^\times)^2$ and $\tau \in \{\varpi,\ep\varpi\}$,  the centralizer may be one of two possible tori $T = \Cent_G(\Gamma)$ up to conjugacy, and neither one is expressly associated to $\pi$.

We conclude this section with our main result.

\begin{theorem}\label{T:zerodepth2}
Let $\pi$ be an irreducible representation of $G$ of depth zero with central character $\zeta$. For any vertex $x\in \buil(G)$, we have
\begin{equation}\label{E:desired}
\Res_{G_{x}}\pi \cong \pi^{G_{x,0+}}  \oplus \bigoplus_{\cO \in \WF(\pi)}\tau_x(\cO,\zeta),
\end{equation}
where $\WF(\pi)$ is as in Table~\ref{Table:WF}.
It follows that $\Res_{G_{x,0+}}\pi$ takes the form of \eqref{E:posdepththm} with constant coefficient $n_x(\pi)=\dim(\pi^{G_{x,0+}})$.  
\end{theorem}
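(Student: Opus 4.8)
The plan is to prove \eqref{E:desired} by isolating the depth-zero component $\pi^{G_{x,0+}}$, which was computed in Tables~\ref{Table:H+} and~\ref{Table:depthzerocomponents}, and then matching the remaining, positive-depth, constituents of $\Res_{G_x}\pi$ with $\bigoplus_{\orb\in\WF(\pi)}\tau_x(\orb,\zeta)$. By \eqref{E:GGxorbit} and \eqref{E:deftau}, this latter sum contributes, for each $\orb\in\WF(\pi)$ and each integer $d>0$ of the same parity as the parity depth of $\orb$ at $x$, exactly one irreducible summand $\Sh_x(X_{-d},\zeta)$, of depth $d$ and degree $\tfrac12 q^{d-1}(q^2-1)$. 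So the claim reduces to showing that for every $d>0$ the depth-$d$ part of $\Res_{G_x}\pi$ is $\bigoplus_\orb \Sh_x(X_{-d},\zeta)$, the sum taken over those $\orb\in\WF(\pi)$ whose parity depth at $x$ agrees with the parity of $d$.

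Since the two vertices of the fundamental chamber represent all $G$-conjugacy classes of vertices, it suffices to treat $x\in\{x_0,x_1\}$; the general case then follows by transport along $\GL(2,\ratk)$ exactly as in the proof of Theorem~\ref{T:posdepth}, using \eqref{Eq:GL2conj} and Example~\ref{E:omegaeta} to track the induced action on the square classes labelling the nilpotent orbits. For each such $x$ and each irreducible depth-zero $\pi$ --- split principal series (irreducible, decomposable of type $H^\tau_\pm$, or $\triv\oplus\St$) and supercuspidal (irreducible or special) --- the decomposition of $\Res_{G_x}\pi$ into irreducibles is already recorded in \cite{Nevins2005} for the principal series and in \cite{Nevins2013} for the supercuspidal representations. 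One reads off from those references that every positive-depth constituent, of depth $d$ say, has the form $\Sh_x(X_{-d},\rho)$ for some nilpotent $X_{-d}\in\g^*$ with $d_x(X_{-d})=-d$ and some character $\rho$ of its centralizer $C_x(X_{-d})=ZU_x$. Since $\pi$ has central character $\zeta$ we must have $\rho|_Z=\zeta$, and since $\eta_{X_{-d}}$ is trivial on $U_x\cap J_{x,X_{-d}}$, the only extension of $\eta_{X_{-d}}$ to $ZU_x$ that occurs is the one trivial on $U_x$; hence $\rho=\zeta$ and each positive-depth constituent is a component of some $\tau_x(\orb,\zeta)$. It then remains to check that the set of nilpotent orbits $G\cdot X_{-d}$ so obtained, over all $d$ and all constituents, is exactly $\WF(\pi)$, with each $\orb$ occurring at precisely those depths $d$ of parity matching the parity depth of $\orb$ at $x$; this is a direct comparison of the orbit labels appearing in \cite{Nevins2005,Nevins2013} with Lemma~\ref{L:NilGamma} and Table~\ref{Table:WF}, and it is consistent precisely because the orbits in Table~\ref{Table:WF} were independently obtained from the depth-zero components via \cite{BarbaschMoy1997}.

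The concluding assertion is then immediate. Restricting \eqref{E:desired} to $G_{x,0+}$ leaves the positive-depth summands unchanged, collapses $\pi^{G_{x,0+}}$ (a representation of $G_x/G_{x,0+}\cong\SL(2,\resk)$) to $\dim(\pi^{G_{x,0+}})$ copies of the trivial representation, and, by the remark preceding this section, $\Res_{G_{x,0+}}\tau_x(\orb,\zeta)$ is independent of $\zeta$. Since $\WF(\pi)=\nil(\Gamma)$ for the element $\Gamma\in\g^*_{x,0}$ associated to $\pi$ in the preceding corollary, the outcome is precisely the shape of \eqref{E:posdepththm} with $r=0$ and $n_x(\pi)=\dim(\pi^{G_{x,0+}})$.

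I expect the main obstacle to be the bookkeeping for the twelve exceptional representations --- the decomposable principal series $H^\tau_\pm$, the pair $\triv\oplus\St$, and the four special supercuspidals --- where one must pin down which square classes of principal nilpotent orbits the positive-depth constituents belong to, and how conjugation by $\omega$ or $\eta$ (Example~\ref{E:omegaeta}) permutes these orbits when passing between $x_0$ and $x_1$. The underlying branching computations already exist in \cite{Nevins2005,Nevins2013}, so this is a matter of careful extraction and sign/square-class accounting rather than new analysis --- but, as the errata footnoted above illustrate, it is exactly where mistakes tend to arise.
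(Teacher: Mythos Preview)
Your proposal is correct and follows essentially the same route as the paper: reduce to $x\in\{x_0,x_1\}$, extract the positive-depth constituents of $\Res_{G_x}\pi$ from \cite{Nevins2005,Nevins2013}, identify each as some $\Sh_x(X,\zeta)$ with $X$ nilpotent, regroup these into the $\tau_x(\orb,\zeta)$, and match the resulting orbit labels against Table~\ref{Table:WF}, handling the passage between $x_0$ and $x_1$ via $\omega$- or $\eta$-conjugation. The paper carries out this case-by-case extraction in full (irreducible principal series, then $\St$, then each $H^\tau_\pm$, then the supercuspidals), while you leave it as a deferred verification --- but the strategy and the ingredients are identical, and you correctly identify where the delicate bookkeeping lies.

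One small point deserves correction. Your abstract argument that $\rho=\zeta$ --- namely that ``since $\eta_{X_{-d}}$ is trivial on $U_x\cap J_{x,X_{-d}}$, the only extension of $\eta_{X_{-d}}$ to $ZU_x$ that occurs is the one trivial on $U_x$'' --- does not hold as stated: triviality on $U_x\cap J$ permits many characters of $U_x$, and nothing you have said singles out the trivial one as the one appearing in $\pi$. Fortunately the argument is unnecessary: the references already record the constituents with the explicit character $\zeta$ (for instance, in the depth-zero principal series case one has $\gamma_0=\gamma_1=0$ in \cite[Thm~7.4]{Nevins2005}, so the $Y_i$ of \eqref{E:Y0Y1} are nilpotent and the accompanying character is the central one), so $\rho=\zeta$ is read off directly rather than derived. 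Simply drop that sentence and cite the references for the form $\Sh_x(X,\zeta)$.
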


\begin{proof}
The decomposition will follow from the main results of  \cite{Nevins2005,Nevins2013} as in the proof of Theorem~\ref{T:posdepth}.  Let $\pi$ be a  depth-zero representation of $G$ with central character $\zeta$, and let $x\in \{x_0,x_1\}$.

For irreducible depth-zero principal series, one has $\gamma_0=\gamma_1=0$ in \cite[Thm 7.4]{Nevins2005}.  Matching notation as is \eqref{NotationNevins2005}, we conclude that $\Sh_{x}(X_{u\varpi^{-d}},\zeta)$ occurs in $\Res_{G_{x}}\pi$, for $u\in \{1,\ep\}$, for each $d>0$, and that these exhaust the irreducible summands.  Therefore the summands can be regrouped as the sum of $\tau_{x}(\cO,\zeta)$, as defined in  \eqref{E:deftau},  over all regular nilpotent orbits, as required.   As the positive-depth summands of $\Res_{G_x}\pi$ are identical for all depth-zero irreducible principal series,  the case of $\pi=\St$ follows since $\Res_{G_x}\triv$ has no positive-depth components.


The results of  \cite[Thm 9.1, Thm 9.2]{Nevins2005} yield 
$$
\Res_{G_{x_0}}H^\tau_+ =
\begin{cases}
 \St \oplus \bigoplus_{d>0}(\Sh_{x_0}(\varpi^{-2d}X_1, \zeta) \oplus \Sh_{x_0}(\varpi^{-2d}X_\ep,\zeta)) & \text{if $\tau=\ep$};\\
 \sigma^1(\sT,\sgn)  \oplus \bigoplus_{d>0}\Sh_{x_0}(\varpi^{-d}X_1, \zeta) & \text{if $\tau=-\varpi$};\\
 \sigma^1(\sT,\sgn) \oplus \bigoplus_{d>0}(\Sh_{x_0}(\varpi^{-2d}X_1, \zeta) \oplus \Sh_{x_0}(\varpi^{-2d+1}X_\ep,\zeta))& \text{if $\tau=-\ep\varpi$.}
\end{cases}
$$
Regrouping the positive-depth summands yields the decomposition
$$
\Res_{G_{x_0}}H^\tau_+ =
\begin{cases}
 \St \oplus \tau_{x_0}(\orb_1,\zeta) \oplus \tau_{x_0}(\orb_\ep, \zeta)& \text{if $\tau=\ep$};\\
 \sigma^1(\sT,\sgn)  \oplus \tau_{x_0}(\orb_1,\zeta) \oplus \tau_{x_0}(\orb_{\varpi},\zeta) & \text{if $\tau=-\varpi$};\\
 \sigma^1(\sT,\sgn) \oplus \tau_{x_0}(\orb_1,\zeta) \oplus \tau_{x_0}(\orb_{\ep\varpi},\zeta) & \text{if $\tau=-\ep\varpi$,}
\end{cases}
$$
which coincides with the wave front set computed in  Table~\ref{Table:H+}.  Since the positive-depth summands of $H^\tau_+\oplus H^\tau_-$ form $\bigoplus_{\cO \in \setorbit\setminus \{0\}}\tau_{x_0}(\cO,\zeta)$, and the wave front sets of these representations are complementary, this yields the result for $\Res_{G_{x_0}}H^\tau_-$ as well.  

To determine $\Res_{G_{x_1}}\pi$ we proceed as in the proof of Proposition~\ref{P:pigx0+}.  Conjugation by $\omega$ interchanges the components of the principal series \emph{except} when: $\tau=-\varpi$ and $-1\in  (\ratk^\times)^2$; or $\tau=-\ep\varpi$ and $-1\notin  (\ratk^\times)^2$. Since the depth-zero components were computed in Proposition~\ref{P:pigx0+} and ${}^\omega\tau_{x_0}(\orb_u,\zeta) =\tau_{x_1}(\orb_{-u\varpi},\zeta)$, we deduce that 
$$
\Res_{G_{x_1}}H^\tau_- =
\begin{cases}
 \St \oplus \tau_{x_1}(\orb_\varpi,\zeta) \oplus \tau_{x_1}(\orb_{\ep\varpi}, \zeta)& \text{if $\tau=\ep$};\\
 
  \sigma^{-1}(\sT,\sgn)  \oplus \tau_{x_1}(\orb_{-\varpi},\zeta) \oplus \tau_{x_1}(\orb_{-\varpi^2},\zeta) & \text{if $\tau=-\varpi$ and $-1\notin  (\ratk^\times)^2$};\\
 \sigma^{-1}(\sT,\sgn) \oplus \tau_{x_1}(\orb_{-\varpi},\zeta) \oplus \tau_{x_1}(\orb_{-\ep\varpi^2},\zeta) 
 & \text{if $\tau=-\ep\varpi$ and $-1\in  (\ratk^\times)^2$;}\\
 \sigma^{-\ep}(\sT,\sgn)  \oplus \tau_{x_1}(\orb_{-\ep\varpi},\zeta) \oplus \tau_{x_1}(\orb_{-\ep\varpi^2},\zeta) 
 & \text{if $\tau=-\varpi$ and $-1\in  (\ratk^\times)^2$};\\
  \sigma^{-\ep}(\sT,\sgn) \oplus \tau_{x_1}(\orb_{-\ep\varpi},\zeta) \oplus \tau_{x_1}(\orb_{-\varpi^2},\zeta) 
 & \text{if $\tau=-\ep\varpi$ and $-1\notin  (\ratk^\times)^2$;}
\end{cases}
$$
Thus, in any case, the nilpotent orbits arising in $\Res_{G_{x_1}}H^\ep_-$ are $\{\orb_\varpi, \orb_{\ep\varpi}\}$; those arising in $\Res_{G_{x_1}}H^{-\varpi}_-$ are $\{\orb_\ep, \orb_{\ep\varpi}\}$; and those arising in $\Res_{G_{x_1}}H^{-\ep\varpi}_-$ are $\{\orb_{\varpi},\orb_{\ep}\}$, which again is consistent with Table~\ref{Table:WF}, as required.

Now suppose that $\pi_i = \cind_{G_i}^G\sigma$ is a supercuspidal representation.  We use \cite[Cor 5.2, Thm 5.3]{Nevins2013},  where $\eta=\smat{1&0\\0&\varpi}$, $\sigma_0^+$ corresponds to our $\sigma^{-1}(T^0,\chi)$, and $\sigma_0^-$ is our $\sigma^{-\ep}(T^0,\chi)$.  Since ${}^\eta\orb_u=\orb_{u\varpi}$, the $G_{x_1}$-representation ${}^\eta(\sigma_0^+)$ is the inflation of $\sigma^{-1}(T^1,\chi)$ to $G_{x_1}$.  We thus infer the decompositions 
$$
\Res_{G_{x_0}}\pi = 
\begin{cases}
\sigma \oplus \bigoplus_{t>0}\left(\Sh_{x_0}(-\varpi^{-2t}X_1,\zeta) \oplus \Sh_{x_0}(-\varpi^{-2t}X_\ep,\zeta)\right) & \text{if $\pi$ nonspecial and $i=0$;}\\
\bigoplus_{t>0}\left(\Sh_{x_0}(-\varpi^{-2t+1}X_1,\zeta) \oplus \Sh_{x_0}(-\varpi^{-2t+1}X_\ep,\zeta)\right) & \text{if $\pi$ nonspecial and $i=1$;}\\
\sigma \oplus \bigoplus_{t>0} \Sh_{x_0}(-\varpi^{-2t}X_u, \zeta) & \text{if $\pi = \pi^{-u}(T^0,\chi)$ is special and $i=0$;}\\
\bigoplus_{t>0} \Sh_{x_0}(-\varpi^{-2t+1}X_u, \zeta) & \text{if $\pi = \pi^{-u}(T^1,\chi)$ is special and $i=1$.}
\end{cases}
$$
Comparing with Table~\ref{Table:WF}, we conclude that \eqref{E:desired} holds for $\Res_{G_{x_0}}\pi_i$ in each case.  The result for general $x$ now follows as in the proof of   Theorem~\ref{T:posdepth}.

Finally, the values $n_x(\pi)=\dim(\pi^{G_{x,0+}})$ can be deduced from Tables~\ref{Table:H+} and \ref{Table:depthzerocomponents}:  it is $q+1$ for irreducible principal series,   $q-1$ for Deligne-Lusztig cuspidal representations, $q$ for $\overline{\St}$, $(q-1)/2$ for the special unipotent representations and  $(q+1)/2$ for the components of the reducible principal series.
\end{proof}

\section{Applications} \label{S:applications}

\subsection{The Fourier transform of a nilpotent orbital integral} \label{S:FT}

As a first application, we derive a formula for the Fourier transform of a nilpotent orbital integral in any open set of the form $\g_{x,0+}$ in terms of the trace characters of the representations $\tau_x(\cO,\zeta)$.  

\begin{proposition}\label{C:orbint}
Let $x\in \buil(G)$ be a vertex.  Let $[\tau_x(\cO)]$ denote the restriction to $G^{\reg}_{x,0+}$ of the trace character of the representation $\tau_x(\cO,\zeta)$, for either choice of central character $\zeta$.   Assume $\exp$ converges on $\g_{x,0+}$.  Then 
for each nonzero nilpotent orbit $\cO$ and $X\in \g^{\reg}_{x,0+}$ we have 
$$
\widehat{\mu_\cO}(X) =
\begin{cases} 
q/2 + [\tau_x(\cO)](\exp X) &\text{if $\cO$ has even parity depth at $x$;}\\
1/2 + [\tau_x(\cO)](\exp X) & \text{if $\cO$ has odd parity depth at $x$}.
\end{cases}
$$
As $x$ ranges over the vertices of $\buil(G)$, these expressions determine the function $\widehat{\mu_\orb}$ on $\g^{\reg}_{1/2+}$.
\end{proposition}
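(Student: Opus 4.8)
The plan is to derive the proposition by confronting the branching decomposition of Theorem~\ref{T:zerodepth2} with the local character expansion \eqref{E:LCE}. Fix a vertex $x$ and a nonzero nilpotent orbit $\cO$, and for $X\in\g^{\reg}_{x,0+}$ put
\[
f^x_\cO(X)=\widehat{\mu_\cO}(X)-[\tau_x(\cO)](\exp X);
\]
this is well defined because, as noted in Section~\ref{attach}, the restriction of the character of $\tau_x(\cO,\zeta)$ to $G^{\reg}_{x,0+}$ is independent of $\zeta$. The goal is to show $f^x_\cO$ is the constant $q/2$ when $\cO$ has even parity depth at $x$ and the constant $1/2$ otherwise. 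Let $\pi$ be any irreducible depth-zero representation of $G$ with central character $\zeta$ and let $X\in\g^{\reg}_{x,0+}\subseteq\g^{\reg}_{0+}$. Evaluating the character of $\Res_{G_x}\pi\cong\pi^{G_{x,0+}}\oplus\bigoplus_{\cO\in\WF(\pi),\,\cO\neq\{0\}}\tau_x(\cO,\zeta)$ at $\exp X\in G_{x,0+}$ gives $\Theta_\pi(\exp X)=\dim\pi^{G_{x,0+}}+\sum_{\cO\in\WF(\pi),\,\cO\neq\{0\}}[\tau_x(\cO)](\exp X)$, since $G_{x,0+}$ acts trivially on $\pi^{G_{x,0+}}$; while the local character expansion \eqref{E:LCE}, valid on $\g^{\reg}_{0+}$ since $\pi$ has depth zero, reads $\Theta_\pi(\exp X)=c_0(\pi)+\sum_{\cO\in\WF(\pi),\,\cO\neq\{0\}}\widehat{\mu_\cO}(X)$ --- here I use that $\widehat{\mu_{\{0\}}}\equiv 1$, that the four nonzero nilpotent orbits of $\sltwo$ are maximal and pairwise incomparable, and that with the standard normalization (see the remark following Theorem~\ref{T:posdepth2}, and \cite{DeBackerSally2000} for the depth-zero values) the coefficient of such an orbit is $1$ if it lies in $\WF(\pi)$ and $0$ otherwise. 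Subtracting yields, for every such $\pi$,
\[
\sum_{\cO\in\WF(\pi),\,\cO\neq\{0\}}f^x_\cO(X)=\dim\pi^{G_{x,0+}}-c_0(\pi),
\]
a quantity independent of $X$.

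Next I would run this identity over enough representations to isolate each unknown. By $G$-invariance of $\widehat{\mu_\cO}$, the equivariance ${}^g\tau_x(\cO,\zeta)\cong\tau_{gx}(\cO,\zeta)$, and transitivity of $G$ on each of the two classes of vertices, it suffices to treat $x\in\{x_0,x_1\}$; by \eqref{E:GGxorbit} and Example~\ref{E:omegaeta} the orbits $\orb_1,\orb_\ep$ have even parity depth at $x_0$ and odd parity depth at $x_1$, while $\orb_\varpi,\orb_{\ep\varpi}$ have odd parity depth at $x_0$ and even parity depth at $x_1$. Take $x=x_0$. Each special supercuspidal $\pi^u(T^0,\sgn)$, $u\in\{1,\ep\}$, has $\WF=\{\orb_u\}$ (Table~\ref{Table:WF}) with $\orb_u$ even at $x_0$, has depth-zero component $\sigma^u(\sT_0,\sgn)$ of dimension $(q-1)/2$ (Theorem~\ref{T:zerodepth2}), and has $c_0=-\tfrac12$ (Table~\ref{Table:c0}); the displayed identity then gives $f^{x_0}_{\orb_u}\equiv q/2$. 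For the orbits $\orb_\varpi,\orb_{\ep\varpi}$ of odd parity depth at $x_0$ I would use the reducible-principal-series components $H^{-\varpi}_+$ and $H^{-\ep\varpi}_+$, whose wave front sets are $\{\orb_1,\orb_\varpi\}$ and $\{\orb_1,\orb_{\ep\varpi}\}$, whose depth-zero components at $x_0$ have dimension $(q+1)/2$ (Table~\ref{Table:H+}), and whose constant terms vanish (Table~\ref{Table:c0}): substituting the value $f^{x_0}_{\orb_1}=q/2$ already obtained gives $f^{x_0}_{\orb_\varpi}\equiv f^{x_0}_{\orb_{\ep\varpi}}\equiv 1/2$. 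The same three steps at $x_1$, using $\pi^u(T^1,\sgn)$ and the second row of Table~\ref{Table:H+} (with the two parities exchanged), settle all four constants there. Feeding the irreducible principal series and the Steinberg representation into the identity serves as a check and recovers the remaining entries of Table~\ref{Table:c0}; it also shows the constants $q/2$ and $1/2$ are forced. This proves the displayed formula at every vertex.

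For the last sentence, recall that $\widehat{\mu_\cO}$ is locally constant on $\g^{\reg}$, so it suffices to show every point of $\g^{\reg}_{1/2+}$ lies in $\g^{\reg}_{x,0+}$ for some vertex $x$; then the formulas above, one per vertex, determine $\widehat{\mu_\cO}$ there. Given $y\in\buil(G)$, let $x$ be a nearest vertex to $y$ (so $y$ lies in the closure of the facet of $x$); working in an apartment $\apart(G,S)$ through $x$ and $y$ and unwinding \eqref{E:MPfiltration}, one checks $\g_{y,1/2+}\subseteq\g_{x,1}=\g_{x,0+}$. Taking the union over $y$ gives $\g^{\reg}_{1/2+}\subseteq\bigcup_{x\ \text{a vertex}}\g^{\reg}_{x,0+}$ (with equality, since $\g_{x,1/2+}=\g_{x,1}=\g_{x,0+}$ at a vertex), as required.

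The main obstacle is the second step: the displayed identity only pins down sums $\sum_{\cO\in\WF(\pi)}f^x_\cO$, so one must verify that, for each of the two vertex classes, the wave front sets of the special supercuspidals --- the only representations furnishing single-orbit equations, and the ones carrying the half-integral constant term that breaks the symmetry among the four $\orb_u$ --- together with those of the $H^\tau_\pm$ form a system of full rank in the four unknowns $f^x_{\orb_u}$. Pushing this through requires the explicit depth-zero components (Tables~\ref{Table:H+} and \ref{Table:depthzerocomponents}) and the tabulated $c_0(\pi)$ (Table~\ref{Table:c0}) for all twelve ``exceptional'' depth-zero representations, together with a careful accounting of parities.
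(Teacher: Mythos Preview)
Your proposal is correct and follows essentially the same strategy as the paper: compare the character formula on $G_{x,0+}$ coming from Theorem~\ref{T:zerodepth2} with the local character expansion for depth-zero representations, and solve for the constants using representations with small wave front sets.

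One simplification worth noting: the paper uses only the four special supercuspidals $\pi^u(T^i,\sgn)$, $u\in\{1,\ep\}$, $i\in\{0,1\}$, at \emph{each} vertex. Since $\WF(\pi^u(T^i,\sgn))=\{\orb_{u\varpi^i}\}$ is a singleton and (from Table~\ref{Table:depthzerocomponents}) $\dim\pi^u(T^i,\sgn)^{G_{x,0+}}$ equals $(q-1)/2$ when $x\sim x_i$ and $0$ otherwise, while $c_0=-\tfrac12$ in all four cases, your displayed identity gives directly
\[
f^{x}_{\orb_{u\varpi^i}}=\dim\pi^u(T^i,\sgn)^{G_{x,0+}}+\tfrac12=\begin{cases}q/2&\text{if }x\sim x_i,\\ 1/2&\text{if }x\not\sim x_i,\end{cases}
\]
which is exactly the even/odd parity-depth dichotomy. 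This avoids the detour through $H^\tau_\pm$ and the need to subtract off a previously computed $f^x_{\orb_1}$; your route via the reducible principal series is correct but does more work than necessary. Your treatment of the final sentence, showing $\g_{y,1/2+}\subseteq\g_{x,0+}$ for a nearest vertex $x$, matches the paper's assertion $\g_{1/2+}=G\cdot(\g_{x_0,0+}\cup\g_{x_1,0+})$.
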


\begin{proof}
Let $\Theta_\pi$ denote the character of the depth-$r$ representation $\pi$.
We assume the functions $\widehat{\mu}_{\orb}$ are normalized as in \cite{MoeglinWaldspurger1987}, so that the coefficients $c_\orb$ corresponding to $\orb\in \WF(\pi)$ in the local character expansion of $\Theta_\pi\circ \exp$ are all equal to $1$.  Thus on $\g^{\reg}_{x,r+}$ we have
$$
\Theta_\pi\circ \exp = c_0(\pi) + \sum_{\cO\in \WF(\pi)}\hat{\mu}_\cO.
$$
 The constant coefficients for supercuspidal representations are given in Table~\ref{Table:c0}, following, for example, \cite[Tables 1--4]{DeBackerSally2000}. For (irreducible components of) principal series, the constant term of the local character expansion is trivial, except in the case of the trivial and Steinberg representations, which have constant terms $1$ and $-1$, respectively.

\begin{table}[ht]
\begin{tabular}{|c|c|}
\hline Representation of $\SL(2,\ratk)$ & coefficient $c_0$ of $\mu_{\{0\}}$\\
of depth $r\geq 0$ & in local character expansion\\
\hline
$\pi(T,\chi)$, $T$ unramified & $-q^r$\\
$\pi^u(T^i,\chi)$, $i\in \{0,1\}$, $u\in \{1,\ep\}$ & $-1/2$\\
$\pi(T,\chi)$, $T$ ramified & $q^{r-1/2}(q+1)/2$\\
$\St$, Steinberg representation & $-1$\\
\hline
\end{tabular}
\caption{Values of the constant term in the local character expansion of supercuspidal and Steinberg representations of $\SL(2,\ratk)$. }
\label{Table:c0}
\end{table}

Theorem~\ref{T:zerodepth2}, on the other hand, gives a formula for the character of any irreducible depth-zero representation on $G_{x,0+}$.  Matching these for the special unipotent representations $\pi = \pi^u(T,\chi)$ yields the given formula.  It is moreover direct to verify the consistency of this expression across the local character expansions of all irreducible representations, including those of positive depth (on $G_{x,r+}$  as in Theorem~\ref{T:posdepth2}).

Finally, we note that for $G=\SL(2,\ratk)$ we have
$\g_{1/2+} = G\dot (\g_{x_0,0+} \cup \g_{x_1,0+}) \subsetneq \g_{0+}$, which limits the $G$-domain on which the formulas hold. 
\end{proof}

\begin{remark}\label{restrep}
Much more explicit formulae for the functions $\widehat{\mu}_\orb$ have been computed for the group $\SL(2,\ratk)$ in \cite{Assem1994,DeBackerSally2000} among others.  They have also noted that, under the exponential map, the characters of the five representations $\{1, \pi^u(T^i,\chi)\mid u\in \{1,\ep\}, i\in \{0,1\}\}$ form another basis for the span of the functions $\widehat{\mu}_\orb$.  In fact the special representations have local character expansions of the form
\begin{equation}\label{E:specialchar}
\Theta_\pi(\exp(X))=\widehat{\mu_\orb}(X) - 1/2,
\end{equation}
for the single corresponding orbit $\orb$, and this 
 holds on the strictly larger set $\g^{\reg}_{0+}$. 
 \end{remark}
 
An advantage to Proposition~\ref{C:orbint} is the simplicity and explicitness of the construction, which uses no more than a vertex and a representative of the orbit as input.  In this, it recalls some of the original formulae for these Fourier transforms of nilpotent orbital integrals in \cite{HarishChandra1999}.

\subsection{Computing the polynomial \texorpdfstring{$\dim(\pi^{G_{x,2n}})$}{dim(pi2n)}}  This arose from a question posed to me by Marie-France Vign\'eras in 2022 and answers \cite[Question 1.1]{HenniartVigneras2023} to the negative (not unexpectedly) for this case.  If $\pi$ is an irreducible representation of $G$, the local character expansion implies that $\dim(\pi^{G_{x,2n}})$ is expressible as a polynomial in $q$, as described in \cite[\S5.1]{BarbaschMoy1997}; see also \cite[Remark 11.8]{HenniartVigneras2023}.  Here we can obtain this polynomial as a corollary of Theorem~\ref{T:posdepth2} and \ref{T:zerodepth2}, using the explicit values computed in Proposition~\ref{P:npi}.

\begin{corollary}
Let $\pi$ be an irreducible representation of $G=\SL(2,\ratk)$ of depth $r$.  Then for each integer $n>0$, we have
$$
\dim(\pi^{G_{x,2n}}) = 
\begin{cases}
q^{2n}+q^{2n-1} & \text{if $\pi$ is an irreducible principal series},\\
q^{2n-1}-q^r & \text{if $\pi$ is supercuspidal nonspecial, from a vertex $\sim x$},\\
q^{2n}-q^r & \text{if $\pi$ is supercuspidal nonspecial, from a vertex $\not\sim x$},\\
\frac12(q+1)(q^{2n-1}-q^{r-\frac12}) & \text{if $\pi$ is supercuspidal, from a nonvertex}.
\end{cases}
$$
On the other hand, if $\pi_s = H^\ep_{s}$ then 
$\dim(\pi^{G_{x,2n}}) =q^{2n-1}$ when the parity depth at $x$ of the orbits in $\WF(\pi_s)$ is even, and equals $q^{2n}$ otherwise; and if $\pi=\St$, then $\dim(\pi^{G_{x,2n}})=q^{2n}+q^{2n-1}-1$.  In all other cases, $\dim(\pi^{G_{x,2n}})$ is exactly half of that of a corresponding nonspecial representation.
\end{corollary}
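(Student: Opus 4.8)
The plan is to read off $\dim(\pi^{G_{x,2n}})$ from the branching decompositions of Theorems~\ref{T:posdepth2} and~\ref{T:zerodepth2}. Fix a vertex $x$ and let $r$ be the depth of $\pi$; assume first that $2n>r$, so that $G_{x,2n}\subseteq G_{x,r+}$ and hence $\pi^{G_{x,2n}}=(\Res_{G_{x,r+}}\pi)^{G_{x,2n}}$. Because $G_{x,2n}$ is compact, the functor of $G_{x,2n}$-invariants is exact on smooth representations (average against Haar measure), so $V\mapsto\dim V^{G_{x,2n}}$ extends to an additive functional on the (completed) Grothendieck group of smooth $G_{x,r+}$-representations, compatible with infinite direct sums. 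Applying it to~\eqref{E:posdepththm} — which holds for all $r\geq 0$ once one also invokes Theorem~\ref{T:zerodepth2} at $r=0$, where $n_x(\pi)=\dim(\pi^{G_{x,0+}})$ — yields
\[
\dim(\pi^{G_{x,2n}})\;=\;n_x(\pi)\;+\;\sum_{\orb\in\nil(\Gamma)}\dim\bigl(\tau_x(\orb)^{G_{x,2n}}\bigr),
\]
where $\nil(\Gamma)=\WF(\pi)$ (by the remark following Theorem~\ref{T:posdepth2}, resp.\ Table~\ref{Table:WF}) and the integer $n_x(\pi)$ is the one tabulated in Proposition~\ref{P:npi} in positive depth.

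The second step is to compute $\dim(\tau_x(\orb)^{G_{x,2n}})$ for a single nonzero nilpotent orbit $\orb$. By~\eqref{E:deftau} together with the decomposition~\eqref{E:GGxorbit} of $\orb$ into $G_x$-orbits, $\tau_x(\orb,\zeta)$ contains exactly one Shalika summand $\Sh_x(X_{-d},\zeta)$ for each positive integer $d$ of the same parity as the parity depth of $\orb$ at $x$, and that summand has dimension $\frac12 q^{d-1}(q^2-1)$ by Proposition~\ref{P:Gxreps}(a). A Shalika representation of depth $d$ is trivial on $G_{x,2n}$ exactly when $d<2n$; when $d\geq 2n$ it has no $G_{x,2n}$-fixed vectors, because in the Mackey decomposition of its restriction to $G_{x,2n}$ the character $\eta_{X_{-d}}$ already restricts nontrivially to $G_{x,d}\subseteq G_{x,2n}$ inside every component — this is the same reasoning as in the proof of Lemma~\ref{L:Shalikaequiv}. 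Summing the resulting finite geometric series, exactly as in the proof of Proposition~\ref{P:npi} but with $\lfloor r\rfloor$ replaced by $2n-1$, gives
\[
\dim\bigl(\tau_x(\orb)^{G_{x,2n}}\bigr)\;=\;\begin{cases}\frac12\bigl(q^{2n-1}-q\bigr)&\text{if $\orb$ has even parity depth at $x$,}\\[4pt]\frac12\bigl(q^{2n}-1\bigr)&\text{if $\orb$ has odd parity depth at $x$.}\end{cases}
\]

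It then remains to insert the explicit data case by case. For an irreducible principal series $\nil(\Gamma)$ is the full set of four nonzero orbits, two of each parity at any vertex, and $n_x(\pi)=q+1$, producing $q^{2n}+q^{2n-1}$; the Steinberg representation is the same except that $n_x(\St)=q$. For a supercuspidal attached to an unramified torus $T$, Lemma~\ref{L:NilGamma} exhibits $\nil(\Gamma)$ as a pair of orbits whose parity depth at $x$ is that of $d_x(\Gamma)$, and one reads $n_x(\pi)$ off Proposition~\ref{P:npi} according as $x$ is or is not $G$-conjugate to $x_T$. For a supercuspidal attached to a ramified torus the two orbits of $\nil(\Gamma)$ have representatives $\orb_u$ and $\orb_{-v}$ with $\val(u)+\val(v)$ odd, hence opposite parity depth at every vertex; their contributions combine to $\frac12(q+1)(q^{2n-1}-1)$, and with $n_x(\pi)=\frac12(q+1)(1-q^{r-1/2})$ this gives $\frac12(q+1)(q^{2n-1}-q^{r-1/2})$. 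The depth-zero representations $H^\tau_\pm$ and the special unipotent supercuspidals are treated by the same recipe, taking $n_x(\pi)=\dim(\pi^{G_{x,0+}})$ from Proposition~\ref{P:pigx0+} and Table~\ref{Table:depthzerocomponents} and reading $\WF(\pi)$ from Table~\ref{Table:WF}; comparing a special representation with the nonspecial representation sharing its positive-depth summands — where $\dim(\pi^{G_{x,0+}})$ drops by a factor of two while the $\tau_x(\orb)$-part is unchanged — yields the ``exactly half'' statement, the representations $H^\ep_\pm$ (and the trivial representation, for which the answer is simply $1$) being the exceptions to this pattern, since $H^\ep_\pm$ do not share their positive-depth structure with an irreducible representation simultaneously at both vertex classes.

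The one delicate point, and the place to take care, is the bookkeeping of parity depths: for each of the two $G$-conjugacy classes of vertices one must determine which orbits of $\nil(\Gamma)$ occur at even, and which at odd, depth at $x$, using the representatives~\eqref{E:nilpotentorbits}, the orbit decomposition~\eqref{E:GGxorbit}, and the action of the elements $\omega$ and $\eta$ of Example~\ref{E:omegaeta} that interchange the two vertex classes. One should also record that the displayed polynomials are the \emph{eventual} values, valid for $2n>r$; for the finitely many smaller $n$ the space $\pi^{G_{x,2n}}$ is computed directly from the same decompositions — it vanishes, in particular, whenever $2n\leq r$ and $x$ is not $G$-conjugate to a point at which $\pi$ attains its depth.
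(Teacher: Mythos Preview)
Your approach is correct and is precisely what the paper intends: the paper gives no detailed proof of this Corollary, merely stating that it follows from Theorems~\ref{T:posdepth2} and~\ref{T:zerodepth2} together with the constants of Proposition~\ref{P:npi}, and you have carried out that computation.  Your formulas for $\dim(\tau_x(\orb)^{G_{x,2n}})$ and the subsequent case analysis are correct, and your argument that a depth-$d$ Shalika representation has no $G_{x,2n}$-fixed vectors when $d\geq 2n$ (via Frobenius reciprocity and the nontriviality of each conjugate of $\eta_{X_{-d}}$ on $G_{x,d}$) is sound.

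One slip in your final paragraph: in justifying the ``exactly half'' assertion you write that the $\tau_x(\orb)$-part is ``unchanged'' when passing from a nonspecial representation to the corresponding special one.  That is not so.  For a special supercuspidal $\pi^u(T^i,\chi)$ the wave front set consists of a \emph{single} orbit, while the corresponding nonspecial $\pi(T^i,\chi)$ has two orbits of the same parity at $x$; hence the $\tau_x$-contribution also halves.  Similarly, each $H^\tau_\pm$ for $\tau\in\{-\varpi,-\ep\varpi\}$ has two orbits (of opposite parity) in its wave front set where the full reducible principal series has four, again halving the $\tau_x$-contribution.  The correct reason the total halves is that \emph{both} $n_x(\pi)=\dim(\pi^{G_{x,0+}})$ and the $\tau_x$-sum halve in each of these cases.
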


\providecommand{\bysame}{\leavevmode\hbox to3em{\hrulefill}\thinspace}
\providecommand{\MR}{\relax\ifhmode\unskip\space\fi MR }
\providecommand{\MRhref}[2]{%
  \href{http://www.ams.org/mathscinet-getitem?mr=#1}{#2}
}
\providecommand{\href}[2]{#2}

\end{document}